\newtheorem{theorem}{Theorem}[section]
\newtheorem{proposition}[theorem]{Proposition}
\newtheorem{lemma}[theorem]{Lemma}
\newtheorem{definition}[theorem]{Definition}
\newtheorem{thm}{Theorem}
\def\G{\mathcal{G}}
\def\mcD{\mathcal{D}}
\def\mcE{\mathcal{E}}
\def\mcF{\mathcal{F}}
\def\mcM{\mathcal{M}}
\def\mcR{\mathcal{R}}
\numberwithin{equation}{section}
\begin{document}
\title[Brownian motion on the golden ratio Sierpinski gasket]{Brownian motion on the golden ratio Sierpinski gasket}

\author{Shiping Cao}
\address{Department of Mathematics, Cornell University, Ithaca 14853, USA}
\email{sc2873@cornell.edu}
\thanks{}

\author{Hua Qiu}
\address{Department of Mathematics, Nanjing University, Nanjing 210093, China}
\email{huaqiu@nju.edu.cn}
\thanks{The research of Qiu was supported by the National Natural Science Foundation of China, Grant 12071213.}

\subjclass[2010]{Primary 28A80}

\date{}

\keywords{golden ratio Sierpinski gasket, infinite graph, Dirichlet forms, heat kernel estimates}

\begin{abstract}
 We construct a strongly local regular Dirichlet form on the golden ratio Sierpinski gasket, which is a self-similar set without any finitely ramified cell structure, via a study on the trace of electrical networks on an infinite graph. The Dirichlet form is self-similar in the sense of an infinite iterated function system, and is decimation invariant with respect to a graph-directed construction. A theorem of uniqueness is also provided. Lastly, the associated process satisfies the two-sided sub-Gaussian heat kernel estimate. 
\end{abstract}
\maketitle

\section{introduction}
The golden ratio Sierpinski gasket $\G$ is a typical example of self-similar sets satisfying the finite type property \cite{BR}, which arises in the study of the Hausdorff dimension of self-similar sets with overlaps \cite{LN,NW,RW}. Let $q_0=(\frac{1}{2},\frac{\sqrt{3}}{2}),q_1=(0,0),q_2=(1,0)$ be the three vertices of an equilateral triangle in $\mathbb{R}^2$, and
\[
\begin{aligned}
F_0(x)=&\rho^2(x-q_0)+q_0,\\
F_1(x)=\rho(x-q_1)&+q_1,\quad F_2(x)=\rho(x-q_2)+q_2,
\end{aligned}
\]
with $\rho=\frac{\sqrt{5}-1}{2}$ being the golden ratio. The gasket $\G$ is the invariant set associated with the iterated function system (i.f.s. for short) $\{F_0,F_1,F_2\}$. See Figure \ref{goldsg}. 

\begin{figure}[h]
	\centering
	\includegraphics[width=5.5cm]{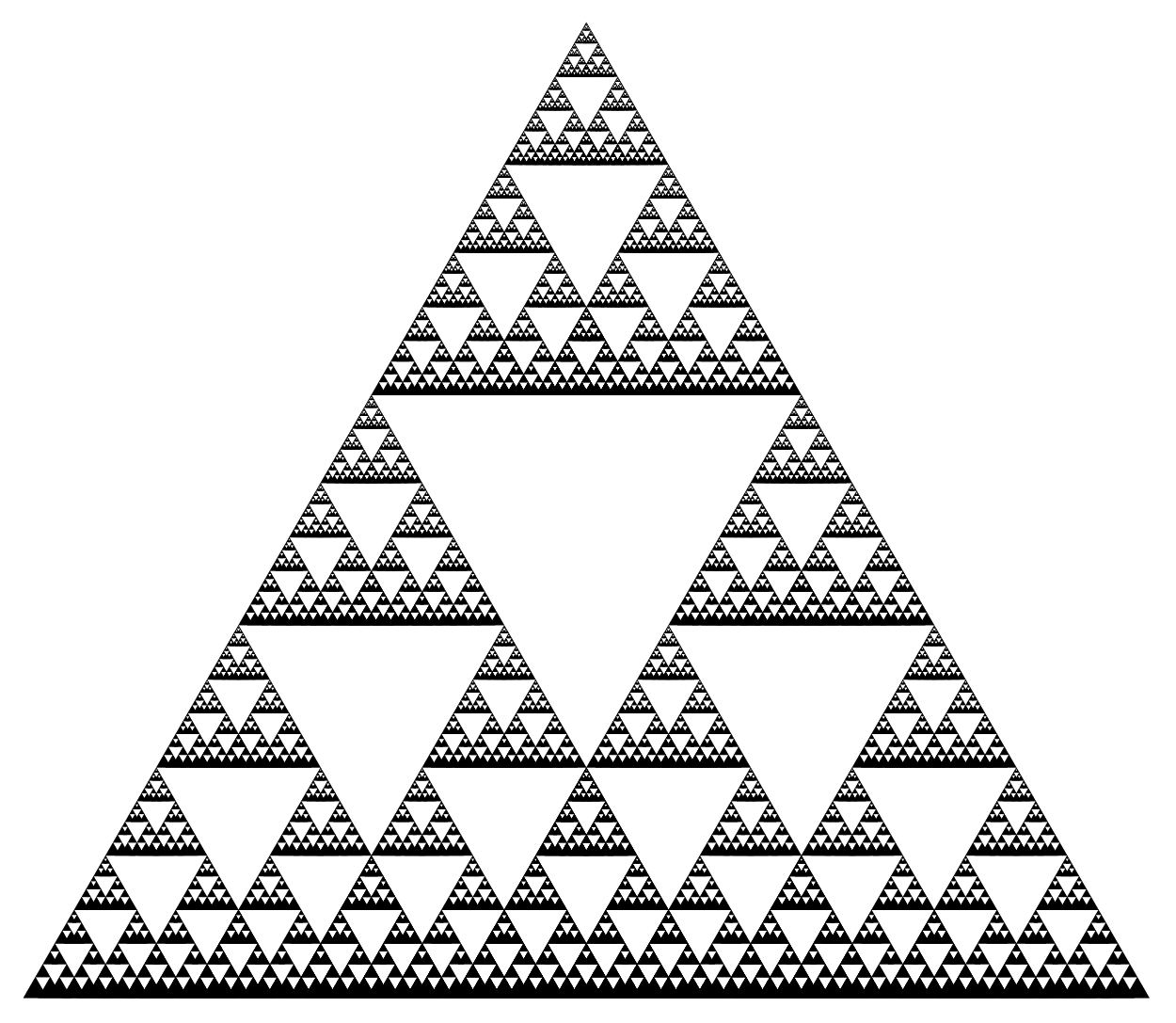}
	\begin{center}
		\caption{The golden ratio Sierpinski gasket $\G$.}\label{goldsg}
	\end{center}
\end{figure}

The large overlap $F_1\G\cap F_2\G$ makes $\G$ different from the existing examples of self-similar sets on which Brownian motions are constructed. 

First, any effort to disconnect the bottom line of $\G$ requires the removal of infinitely many points, so there is not a finitely ramified cell structure \cite{T} on $\G$. Well-known classes of fractals with finitely ramified cell structures include Lindstr$\o$m's nested fractals \cite{Lindstrom}, Kigami's post-critically finite (p.c.f.) self-similar sets \cite{ki1,ki2}, finitely ramified graph-directed fractals \cite{CQ,HN}, and some Julia sets of polynomials \cite{ADS,FS,RT} or rational functions \cite{CHQES}. See \cite{BP,G,kus} for pioneering works on the Sierpinski gasket, and also books \cite{B,ki3} for systematic discussions.

Second, although there is a graph-directed construction related with $\G$ (see Section 2), by dividing $\G$ into blocks of nearly the same size, the graph will be much complicated. The deep and famous constructions on the Sierpinski carpet \cite{BB,BB1,BB2} by Barlow and Bass, and on certain symmetric fractals \cite{KZ} by Kusuoka and Zhou will be extremely difficult here. See also \cite{BBKT} for a theorem of uniqueness on the Sierpinski carpet.
 
Instead, thanks to the golden ratio, there is an `infinite cell structure' on $\G$. For the first level, we consider the cell $F_0\G$ and its images of combinations of $F_1,F_2$. The union of these cells covers $\G$ except the bottom line. For each such cell, we can find a finite word $w$, and a contraction map $F_w=F_{w_1}\circ F_{w_2}\circ \cdots \circ F_{w_m}$, so that the cell can be written as $F_w\G$. We name the collection of all such words $W_1$, and construct a  resistance form \cite{ki3} on $\G$, that is self-similar in the sense of the infinite i.f.s. $\{F_w\}_{w\in W_1}$. Roughly speaking, we have the following theorem, see Theorem \ref{thm55}, \ref{thm56} and \ref{thm58} for detailed and formal results.

\begin{thm}
There exists a unique strongly local regular resistance form $(\mcE,\mcF)$ on $\G$ such that $f\in\mcF$ if and only if $f\circ F_w\in \mcF$ for all $w\in W_1$ and $\sum_{w\in W_1} \rho_w^{-\theta}\mcE(f\circ F_w,f\circ F_w)<\infty$, where $\rho_w$ is the similarity ratio of $F_w$ and $0<\theta<1$ is a constant. In addition, 
	\[\mcE(f,f)=\sum_{w\in W_1} \rho_w^{-\theta}\mcE(f\circ F_w,f\circ F_w).\]
Moreover, the form is decimation invariant with respect to the graph-directed construction of $\G$.
\end{thm}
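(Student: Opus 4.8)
The plan is to realize $(\mcE,\mcF)$ as a limit of compatible discrete energy forms living on an increasing sequence of finite graphs that exhaust $\G$, where the compatibility between consecutive scales is engineered through the trace (effective-resistance reduction) of an associated electrical network. Because any attempt to separate the bottom line of $\G$ requires deleting infinitely many points, the reduction at each scale is not onto a finite cell boundary but onto the boundary of a genuinely infinite subnetwork; the central object is therefore the trace of this infinite network, matching the role electrical networks on infinite graphs play in the construction. First I would fix approximating vertex sets $V_0\subset V_1\subset\cdots$ determined by the words in $W_1$ together with the coarser graph-directed blocks, and introduce a one-parameter family of candidate conductances in which each cell $F_w\G$ carries the weight $\rho_w^{-\theta}$.

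The heart of the argument is to locate the correct exponent $\theta$. Using the self-similarity of $\G$ under the infinite i.f.s. $\{F_w\}_{w\in W_1}$, the trace of the weighted infinite network onto $V_0$ should satisfy a functional renormalization identity: tracing out one level and rescaling must return the same network. I would formulate this as a fixed-point problem for the trace map on the cone of conductances. Since the graph-directed decomposition couples several block types, the linearized problem is naturally a Perron--Frobenius eigenvalue problem for a nonnegative matrix assembled from effective resistances; monotonicity of effective resistance in the weights, combined with the intermediate value theorem in $\theta$, should yield a unique $\theta\in(0,1)$ for which the infinite trace is finite, nondegenerate, and scale-invariant. Showing that this infinite series/parallel trace is simultaneously finite (so that $\mcF$ is rich) and bounded below (so that the resistance metric is genuine) is where I expect the real difficulty to lie.

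Granting the eigenform, I would set $\mcE(f,f)=\lim_n\mcE_n(f,f)$, a limit that is monotone by trace compatibility, and $\mcF=\{f:\lim_n\mcE_n(f,f)<\infty\}$; the resistance-form axioms of Kigami \cite{ki3} are then routine to verify. The self-similar identity $\mcE(f,f)=\sum_{w\in W_1}\rho_w^{-\theta}\mcE(f\circ F_w,f\circ F_w)$ and the stated characterization of $\mcF$ follow by reading off how $V_n$ decomposes under the i.f.s. and invoking scale invariance of the eigenform. Strong locality is inherited from the nearest-neighbor (no jump, no killing) structure of each $\mcE_n$, and regularity from compactness of $\G$ together with the resistance metric inducing the Euclidean topology.

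For uniqueness I would argue that any strongly local regular resistance form obeying the self-similar identity must have the same trace on $V_0$: that trace is forced to be a fixed point of the same renormalization map, and Perron--Frobenius uniqueness pins it down up to the normalizing scalar. Propagating this equality to every $V_n$ through the self-similar identity gives agreement on $\bigcup_n V_n$, hence everywhere by density. Finally, decimation invariance with respect to the graph-directed construction is the assertion that the trace of $(\mcE,\mcF)$ onto the coarser block structure reproduces the scaled block forms; this is precisely the consistency already built into the eigenform, now expressed in the block coordinates rather than the infinite-i.f.s.\ coordinates, so it follows once the change of bookkeeping is carried out. Throughout, the controlling obstacle remains the analysis of the infinite trace --- handling an infinite network reduction carefully enough to prove both finiteness and nondegeneracy, and to solve the renormalization fixed point uniquely.
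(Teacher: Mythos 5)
Your overall architecture coincides with the paper's: weight each cell $F_w\G$ by $\rho_w^{-\theta}$, trace the resulting \emph{infinite} network onto $V_0$, demand that the trace reproduce the original form up to the natural scale factor, build $(\mcE,\mcF)$ as a monotone limit of the level-$m$ forms, prove uniqueness by forcing the trace of any competitor to solve the same renormalization equation, and read off decimation invariance as a change of bookkeeping. However, the two steps on which everything rests are either missing or would fail in the form you state them. First, the renormalization problem is \emph{not} a Perron--Frobenius eigenvalue problem for a nonnegative matrix: the trace map $\mcR_r\mcD=[\Psi_r\mcD]_{V_0}$ is a genuinely nonlinear map on the cone $\mcM$ of resistance forms on $V_0$ (a Schur complement is nonlinear in the conductances), so linear Perron--Frobenius theory gives you neither existence nor uniqueness of the eigenform. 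The paper obtains existence, for each fixed $r=\rho^\theta$, from a topological fixed-point argument that needs two nontrivial inputs you do not supply: joint continuity of $(r,\mcD)\mapsto\mcR_r\mcD$ (Theorem \ref{thm32}, which in turn rests on the density statement of Proposition \ref{prop33}), and the uniform nondegeneracy estimate of Lemma \ref{lemma43}, which shows the normalized trace maps $\mcM_S$ into a compact set $\tilde{\mcM}_S^{[s,1]}$ of forms bounded away from degeneracy, so that a compact convex set is mapped into itself. That estimate is exactly the ``finiteness and nondegeneracy of the infinite trace'' you flag as the expected difficulty and then leave unresolved; it is also where the restriction $0<\theta<1$ (equivalently $\rho<r<1$, Proposition \ref{prop23}) enters, using $\#W_{1,n}\asymp\rho^{-n}$ --- outside this range the infinite network fails to separate the three boundary points at all. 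Your intermediate-value argument in $\theta$ likewise requires continuity and monotonicity of the eigenvalue $\lambda(r)$, which again comes from Theorem \ref{thm32} and a comparison argument (Proposition \ref{prop44}), not from monotonicity of effective resistance alone.

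Second, uniqueness of the eigenform cannot be ``pinned down by Perron--Frobenius uniqueness up to scalar.'' Since the map is nonlinear, the paper must run Sabot's argument (Theorem \ref{thm47}): diagonalize a competing solution $\mcD'$ against the symmetric solution $\mcD$, show the resulting eigenspaces are invariant under composition of harmonic functions with $F_0$, identify them with the symmetric/antisymmetric spaces $E_s$, $E_a$ (this is the only place a Perron--Frobenius theorem appears, applied to the \emph{linear} boundary-to-interior matrix of harmonic functions in Lemma \ref{lemma48}), and then derive a contradiction with Lemma \ref{lemma43} by showing that otherwise \emph{every} symmetric form would solve the fixed-point equation. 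Without this (or some substitute such as a Hilbert-projective-metric contraction argument, which is not automatic here), your uniqueness step --- and hence the uniqueness assertion of the theorem, which reduces to uniqueness of the pair $(\theta,\mcD)$ --- has no proof. The remaining parts of your plan (the monotone limit, verification of the resistance-form axioms, strong locality, regularity, propagation of uniqueness from $V_0$ to $\bigcup_m \bar V_m$, and the graph-directed decimation identity) do track the paper's Section 6 and are comparatively routine once the eigenform problem is settled.
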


The form $(\mcE,\mcF)$ is then a strongly local regular Dirichlet form on $L^2(\G,\mu_H)$, where $\mu_H$ is the normalized Hausdorff measure on $\G$. There is a diffusion process associated by the well-known theorem \cite{FOT}. Although, our construction is based on an infinite i.f.s., the behavior of the process is same on each cell before hitting the boundary, up to a time scaling, since any cell can be decomposed in a same manner. 

In addition, by following the well-established method of Hambly and Kumagai \cite{HK}, which is organized in Barlow's book \cite{B}, we can obtain a sub-Gaussian heat kernel estimate (see Section 7). We refer to \cite{BP,FHK,kum} for earlier results on transition density estimates on fractals. 

\begin{thm}
	There is a symmetric transition density $p(t,x,y)$ associated with the form $(\mcE,\mcF)$ on $\G$. In addition, there are constants $c_1,c_2,c_3,c_4$  so that
	\[c_1t^{-d_H/\beta}\exp\big(-c_2(\frac{d(x,y)^\beta}{t})^{\frac{1}{\beta-1}}\big)\leq p(t,x,y)\leq c_3t^{-d_H/\beta}\exp\big(-c_4(\frac{d(x,y)^\beta}{t})^{\frac{1}{\beta-1}}\big),\]
	for $0<t\leq 1$, with $\beta=\theta+d_H$, where $d_H\approx1.6824$ is the Hausdorff dimension of $\G$.  
\end{thm}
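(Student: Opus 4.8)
The plan is to verify the two geometric–analytic inputs required by the Hambly--Kumagai machinery \cite{HK} (as organized in \cite{B}) and then to invoke the general equivalence between these inputs and the two-sided sub-Gaussian estimate. Throughout, $d$ denotes the intrinsic (geodesic) metric on $\G$, taken comparable to the Euclidean one, with balls $B(x,r)$. The existence of a symmetric transition density is not a separate matter: once a Nash-type inequality is in force it yields ultracontractivity of the semigroup, hence a jointly measurable density $p(t,x,y)$ together with the on-diagonal bound $p(t,x,x)\asymp t^{-d_H/\beta}$, which is exactly the prefactor appearing in both sides of the estimate.

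The first input I would establish is the volume regularity
\[
\mu_H\big(B(x,r)\big)\asymp r^{d_H},\qquad 0<r\leq 1,\ x\in\G .
\]
This is the Ahlfors $d_H$-regularity of the normalized Hausdorff measure. Despite the large overlap $F_1\G\cap F_2\G$, it follows from the finite type property of $\{F_0,F_1,F_2\}$ combined with the infinite cell decomposition $\{F_w\}_{w\in W_1}$: each ball of radius $r$ contains, and is contained in, a uniformly bounded number of cells $F_w\G$ with $\rho_w\asymp r$, while $\mu_H(F_w\G)\asymp\rho_w^{d_H}$ by the self-similar scaling of $\mu_H$.

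The second input, and the heart of the argument, is the resistance estimate
\[
R\big(x,B(x,r)^c\big)\asymp r^{\theta},\qquad 0<r\leq 1 ,
\]
where $R$ is the effective resistance of $(\mcE,\mcF)$ (equivalently $R(x,y)\asymp d(x,y)^\theta$). The exponent $\rho_w^{-\theta}$ in the self-similar energy identity $\mcE(f,f)=\sum_{w\in W_1}\rho_w^{-\theta}\mcE(f\circ F_w,f\circ F_w)$ forces the resistance across a cell $F_w\G$ to scale like $\rho_w^{\theta}$, and the decimation invariance with respect to the graph-directed construction propagates this scaling uniformly across all cells of comparable diameter. Feeding the volume and resistance estimates into the Einstein relation for resistance forms,
\[
E_x\big[\tau_{B(x,r)}\big]\asymp \mu_H\big(B(x,r)\big)\cdot R\big(x,B(x,r)^c\big)\asymp r^{d_H}\cdot r^{\theta}=r^{\beta},
\]
identifies the walk dimension as $\beta=\theta+d_H$.

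With volume regularity, the exit-time (equivalently resistance) estimate, and the chain condition for $d$ in hand, the two-sided sub-Gaussian bound follows from the general theorem of \cite{HK,B}: the Nash inequality gives the prefactor $t^{-d_H/\beta}$, while the near-diagonal lower bound together with a chaining argument over $\asymp d(x,y)/t^{1/\beta}$ balls of radius $t^{1/\beta}$ produces the matching upper and lower Gaussian factors with exponent $1/(\beta-1)$. I expect the main obstacle to be the resistance estimate itself. The standard arguments rest on a finitely ramified cell structure, which $\G$ lacks, so any chain crossing the bottom line $F_1\G\cap F_2\G$ meets infinitely many cells; controlling the resistance uniformly there, and verifying that the intrinsic metric stays comparable to the Euclidean metric through the overlap, is where the infinite-i.f.s. self-similarity and the decimation invariance established earlier must be pushed hardest. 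Once the resistance scaling is secured on every scale, the remainder is the by-now-standard Hambly--Kumagai bookkeeping.
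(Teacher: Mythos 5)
Your overall framework coincides with the paper's: Ahlfors regularity of $\mu_H$ (the paper's Lemma \ref{lemma14}) plus a two-sided resistance estimate, fed into the Hambly--Kumagai/Barlow machinery via a Nash inequality and exit-time estimates. However, there is a genuine gap at precisely the step you flag as the main obstacle: the lower bound $R\big(p,B_s^c(p)\big)\geq c\,s^\theta$ (equivalently $R(p,q)\geq c\,d(p,q)^\theta$). Your proposed mechanism --- that the exponent $\rho_w^{-\theta}$ in the self-similar energy identity ``forces'' the resistance across a cell to scale like $\rho_w^\theta$, and that decimation invariance ``propagates this scaling uniformly'' --- is an assertion of the desired conclusion, not an argument. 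Self-similarity does give the resistance scaling between boundary points of a single cell, and chaining along cells then yields the \emph{upper} bound $R(p,q)\leq c\,d(p,q)^\theta$ (this is the paper's Lemma \ref{lemma54}); but the lower bound requires exhibiting, for every $p\in\G$ and every scale $0<s<1$, a test function equal to $1$ at $p$, vanishing outside $B_s(p)$, with energy at most $c^{-1}s^{-\theta}$. Since a ball centered at a point of the bottom line meets infinitely many cells of the infinite i.f.s., no finite chain or cut argument over cells produces such a function, and the uniformity in $(p,s)$ is exactly what is in question.

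The paper's proof of this step (Proposition \ref{prop63}) supplies the missing construction: by regularity of $(\mcE,\mcF)$ one takes $f_{p,s}$ equal to $1$ on a union $U_{p,s,0}$ of cells of diameter $\asymp s$ containing $p$ and supported in a slightly larger cell-neighborhood $U_{p,s,1}\subset B_s(p)$; then the finite type property is invoked to show that, up to affine maps $\psi$, only finitely many configurations $(U_{p,s,0},U_{p,s,1})$ occur, so the scaling relation $\mcE(f_{p,s})=\rho_\psi^{-\theta}\mcE(f_{p_i,s_i})$ gives $\mcE(f_{p,s})\leq c^{-1}s^{-\theta}$ uniformly in $p$ and $s$. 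This reduction to finitely many local models modulo affine equivalence is the key idea absent from your proposal; without it (or an equivalent uniform cutoff construction) the Einstein relation and the chaining you describe cannot start, because the exit-time lower bound has no input. A smaller omission: you treat the Nash inequality as part of the standard machinery, whereas it too must be proved for this space; the paper does so (Proposition \ref{prop62}) by decomposing $f$ over the cells $\hat W_s$ and optimizing over $s$, again using the self-similar energy identity together with the bound $\sum_{w\in \hat W_s}\rho_w^{-\theta}\mcE(f\circ F_w)\leq c'\mcE(f)$.
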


 The main effort is to get an estimate of the resistance metric $R$ on $\G$ as $c_1d(x,y)^\theta\leq R(x,y)\leq c_2d(x,y)^\theta$ for some constant $c_1,c_2>0$.

We organize the structure of the paper as follows. In Section 2, we will briefly introduce some facts about the geometry of $\G$. From Section 3 to 5, we study the trace of forms on an infinite graph. In Section 3, we establish the resistance forms on the graph. In Section 4, we study the trace map and a related renormalization map. We will show the jointly continuity of the renormalization map. In Section 5, we show that there is a unique solution to a renormalization problem. With all these preparations, we construct the resistance form on $\G$ in Section 6, and at the same time we get an upper bound estimate of the resistance metric. Lastly, we obtain the transition density estimate through a lower bound estimate of the resistance metric in Section 7.

Before ending this section, we remark that the result in this paper has a natural extension, by replacing $0<\rho<1$ to be a real root of $x^n-2x+1$ with $n\geq 4$, and taking the contraction ratios corresponding to $F_0, F_1, F_2$ to be $1-\rho, \rho,\rho$. Indeed, we will obtain a class of gaskets that possess a similar overlapping structure of $\G$, see Figure \ref{pisot}. 
\begin{figure}[htp]
	\includegraphics[width=5.5cm]{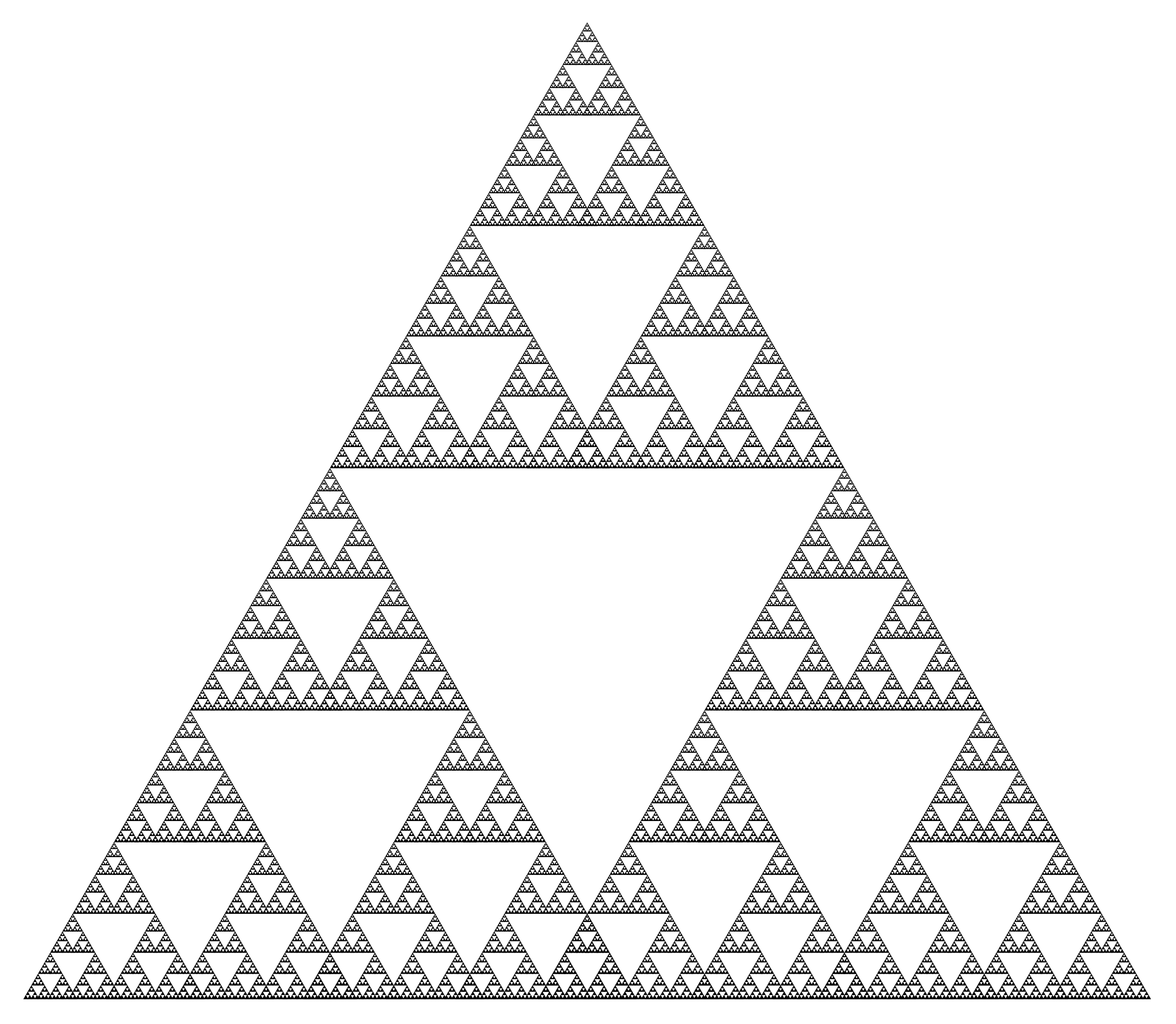}
	\caption{A gasket with $0<\rho<1$ being a root of $x^4-2x+1=0$.}\label{pisot}
\end{figure}

\section{Preliminary}
The golden ratio Sierpinski gasket $\G$ is one of the typical examples of self-similar sets with overlaps but satisfying the finite type property. 

Let $K$ be a general self-similar set associated with an i.f.s. $\{F_i\}_{i=0}^{N-1}$ with contraction ratios $\{\rho_i\}_{i=0}^{N-1}$. For $m\geq 1$, we call $w=w_1w_2\cdots w_m$ with $0\leq w_i<N$, a \textit{word} of length $m$(denoted by $|w|$), and call $\emptyset$ the empty word. We denote the set of all words by $\tilde{W}_*$. For any word $w\in \tilde{W}_*$, we write $F_w=F_{w_1}\circ F_{w_2}\cdots\circ F_{w_{|w|}}$, and let $F_\emptyset$ be the identity map for consistency. Let $\rho_*=\min\{\rho_i: 0\leq i< N\}$.

\begin{definition}[finite type property]\label{lemma11}
A self-similar set $K$ is of \emph{finite type} if there are only finite many maps $h=F_w^{-1}F_v$ with $w,v\in \tilde W_*$ and $F_w K\cap F_v K\neq \emptyset$, and with similarity ratio $\rho_h\in(\rho_*,1/\rho_*)$. 
\end{definition}

The finite type property of $K$, formulated in algebraic terms, was introduced in \cite{BR} by Bandt and Rao. It guarantees the existence of an `almost non-overlapping' graph-directed construction (see \cite{BR,NW} for details) of $K$, which is quite useful for calculating the Hausdorff dimension of $K$. See \cite{LN,RW} for more flexible variants of the finite type property. 

It is easy to verify that $\G$ satisfies the finite type property, noticing that $F_{122}\G=F_{211}\G$. In particular, it has the following \textit{graph-directed construction} \cite{MW}. 

\begin{definition}[a graph-directed construction of $\G$]\label{def12}
(a). Let $K_1=\G$ and $K_2=\overline{\G\setminus F_{22}\G}$. 

(b). Let $\Gamma(S,E)$ be a directed graph with the vertex set $S=\{1,2\}$, and the edge set $E=\{e_i\}_{i=1}^6$, where $e_1=(1,2),e_2=(1,1),e_3=(2,1),e_4=(2,2),e_5=(2,2),e_6=(2,1)$. 

(c). Define $\psi_{e_1}=Id$, $\psi_{e_2}=F_{22}$, $\psi_{e_3}=F_0$, $\psi_{e_4}=F_1$, $\psi_{e_5}=F_{21}$, $\psi_{e_6}=F_{20}$.      
\end{definition}

Clearly, we have 
\[K_1=\bigcup_{i=1}^2 \psi_{e_i}K_{e_{i,2}}\text{ and }K_2=\bigcup_{i=3}^6 \psi_{e_i}K_{e_{i,2}},\]
where we use the notation $e_i=(e_{i,1},e_{i,2})$ for a directed edge. In addition, there exist bounded open sets $O_1$ and $O_2$ such that $\bigcup_{i=1}^2 \psi_{e_i}O_{e_{i,2}}\subset O_1$ and $\bigcup_{i=3}^6 \psi_{e_i}O_{e_{i,2}}\subset O_2$, where the unions are disjoint. See Figure \ref{graph-directed} for an illustration. 

\begin{figure}[h]
	\centering
	\includegraphics[width=5cm]{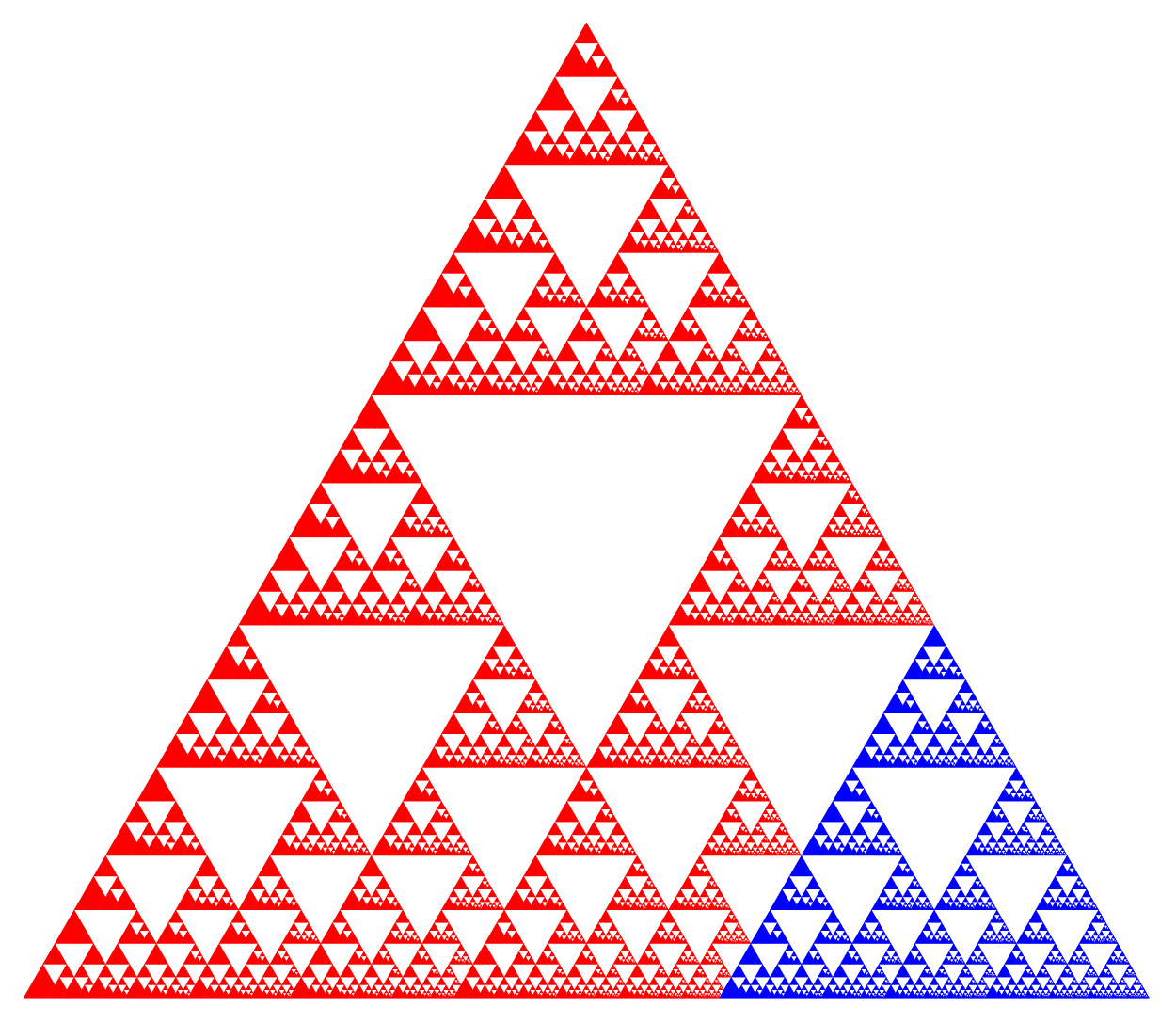}\hspace{1cm}
	\includegraphics[width=4cm]{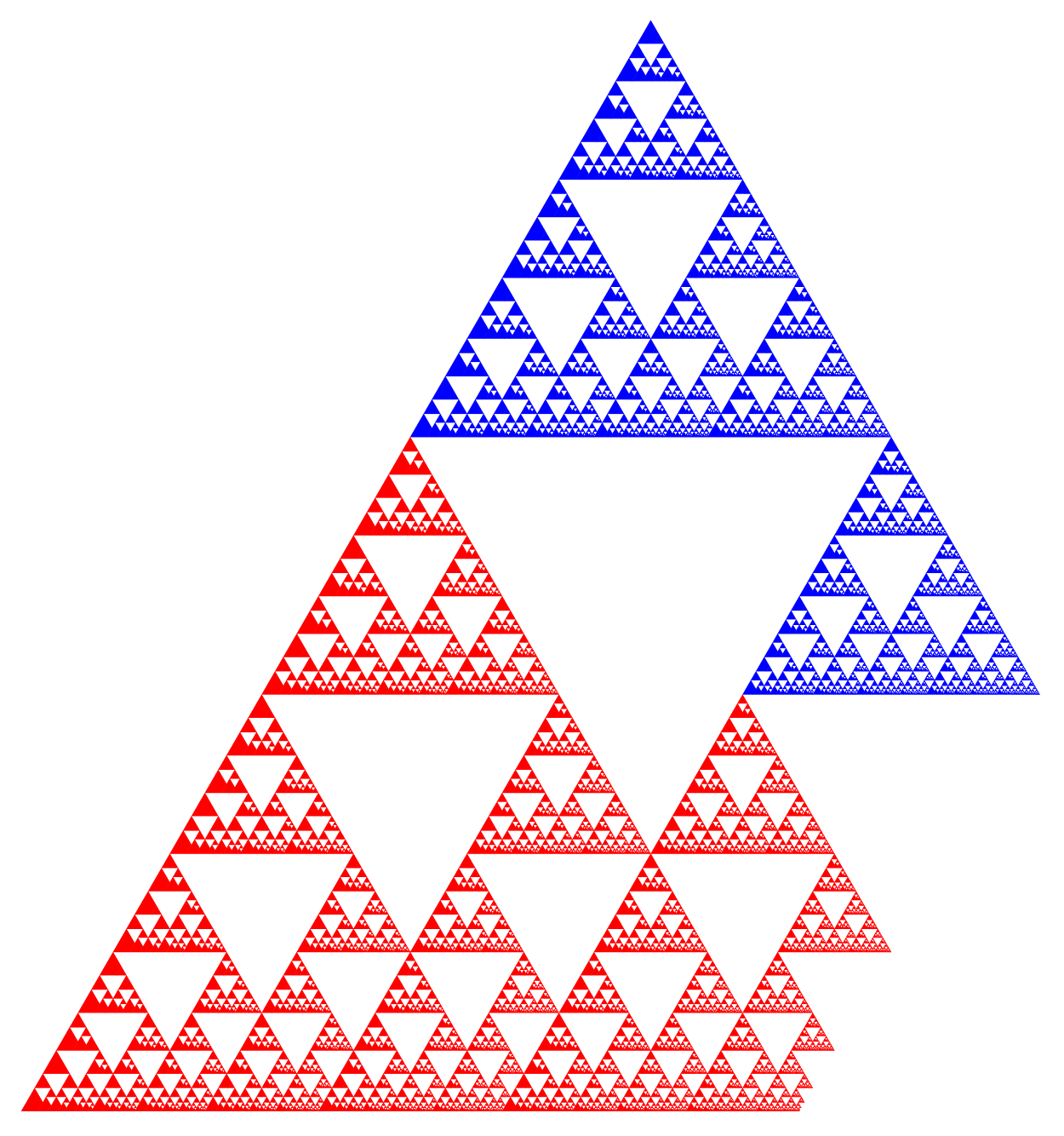}
	\begin{picture}(0,0)
	\put(-225,-10){$K_1$}\put(-52,-10){$K_2$}
	\end{picture}
	\begin{center}
		\caption{A graph-directed construction of $\G$.}\label{graph-directed}
	\end{center}
\end{figure}

Then similar to the open set condition situation, one can calculate the exact value of the Hausdorff dimension of $\G$ to be \[d_H=\frac{\log\eta}{-2\log\rho}\approx1.6824\] with $\eta$ being the largest root of $x^3-6x^2+5x-1$. In addition, the associated Hausdorff measure of $\G$ is positive and finite. See details in \cite{NW} by Ngai and Wang.

In this paper, we take $\mu_H$ to be the normalized Hausdorff measure on $\G$, i.e. $\mu_H(\G)=1.$ 
 It is not hard to verify that $\mu_H$ is volume doubling. 
 
\begin{lemma}\label{lemma14}
	Let $B_s(p)=\{q\in \G:d(p,q)<s\}$. There are constants $c_1,c_2>0$ such that  
		\[c_1s^{d_H}\leq\mu_H(B_s(p))\leq c_2s^{d_H}, \quad\forall p\in \G,0<s\leq1.\]
\end{lemma}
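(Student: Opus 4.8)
The plan is to show that $\mu_H$ is Ahlfors $d_H$-regular by exploiting the graph-directed construction of Definition \ref{def12} together with the scaling property of the Hausdorff measure. Write $H_i=\mu_H(K_i)$ for $i=1,2$; since $\mu_H$ is a positive constant multiple of $\mathcal{H}^{d_H}$ and $0<\mathcal{H}^{d_H}(\G)<\infty$, both $H_1$ and $H_2$ are finite and positive. For a directed path $\gamma=e_{i_1}e_{i_2}\cdots e_{i_k}$ in $\Gamma(S,E)$, set $\psi_\gamma=\psi_{e_{i_1}}\circ\cdots\circ\psi_{e_{i_k}}$, let $\rho_\gamma$ be its similarity ratio, and let $t(\gamma)$ be its terminal vertex. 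The first step is the cell-measure identity $\mu_H(\psi_\gamma K_{t(\gamma)})=\rho_\gamma^{d_H}H_{t(\gamma)}$. This follows by induction from the two decompositions $K_1=\bigcup_{i=1}^2\psi_{e_i}K_{e_{i,2}}$ and $K_2=\bigcup_{i=3}^6\psi_{e_i}K_{e_{i,2}}$ and the scaling $\mu_H(FA)=r^{d_H}\mu_H(A)$ for a similarity $F$ of ratio $r$: the disjoint open set condition $\bigcup\psi_{e_i}O_{e_{i,2}}\subset O_j$ (disjoint unions) forces the overlaps in each decomposition to be $\mu_H$-null, so $\mu_H$ is finitely additive over the pieces.

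Next I would control diameters. As $\psi_\gamma$ is a similarity of ratio $\rho_\gamma$ and $\diam(K_1),\diam(K_2)$ are positive and finite, $\diam(\psi_\gamma K_{t(\gamma)})$ is comparable to $\rho_\gamma$. The only non-contracting edge is $e_1=(1,2)$ with $\psi_{e_1}=Id$; since $e_1$ starts at vertex $1$ and every edge leaving vertex $2$ has ratio at most $\rho$, any two consecutive edges of a directed path contract by a factor at most $\rho<1$, so $\rho_\gamma$ decays geometrically in $|\gamma|$. Hence, for each $0<s\le 1$, one may form the antichain $\Lambda_s$ of directed paths cut at the first instant $\rho_\gamma\le s$; uniformly over $\gamma\in\Lambda_s$ one then has $\diam(\psi_\gamma K_{t(\gamma)})$ comparable to $s$ and, by the first step, $\mu_H(\psi_\gamma K_{t(\gamma)})$ comparable to $s^{d_H}$, while the cells $\{\psi_\gamma K_{t(\gamma)}\}_{\gamma\in\Lambda_s}$ cover $\G$.

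For the lower bound, given $p\in\G$ and $0<s\le 1$, fix a small constant $c$ and choose a cell $\psi_\gamma K_{t(\gamma)}\in\Lambda_{cs}$ containing $p$. Its diameter is strictly less than $s$ once $c$ is small enough, so it lies inside $B_s(p)$, whence $\mu_H(B_s(p))\ge\mu_H(\psi_\gamma K_{t(\gamma)})\ge c_1s^{d_H}$. For the upper bound, I would cover $B_s(p)$ by the cells of $\Lambda_s$ that it meets. Each such cell has measure at most a constant times $s^{d_H}$, so it suffices to bound the number of cells of $\Lambda_s$ intersecting $B_s(p)$ by a constant $M$ independent of $p$ and $s$; granting this, $\mu_H(B_s(p))\le M c_2' s^{d_H}=c_2 s^{d_H}$. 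This uniform bound on the number of comparable neighbouring cells is exactly where the finite type property of Definition \ref{lemma11} enters: the finiteness of the maps $h=F_w^{-1}F_v$ with $\rho_h\in(\rho_*,1/\rho_*)$ and $F_w\G\cap F_v\G\neq\emptyset$ caps how many cells of comparable generation can lie within distance $s$ of one another.

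The main obstacle is precisely this bounded-overlap estimate. The lower bound and the cell-measure bookkeeping are routine once the scaling identity is in place, but extracting a scale-independent bound on the local multiplicity requires genuinely invoking the finite type machinery of Bandt--Rao and Ngai--Wang \cite{BR,NW}: one must argue that two cells of comparable generation lying within distance $s$ of each other are related by one of the finitely many admissible neighbour maps $h$, which is what bounds the number of cells of $\Lambda_s$ meeting a common ball uniformly in $s$.
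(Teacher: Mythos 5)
Your overall architecture is the standard Ahlfors-regularity argument and is essentially sound: stop the graph-directed construction of Definition \ref{def12} at the first scale where $\rho_\gamma\le s$ (the geometric decay of $\rho_\gamma$ is correct, since the only ratio-one map $\psi_{e_1}=Id$ is followed by an edge out of vertex $2$), get cells of diameter and mass comparable to $s$ and $s^{d_H}$, put one cell inside $B_s(p)$ for the lower bound, and cover $B_s(p)$ by boundedly many cells for the upper bound. For the record, the paper gives no proof of Lemma \ref{lemma14} at all (it is asserted as routine, with the measure-theoretic input deferred to \cite{NW}), so your proposal is judged on its own merits. One simplification: your first step needs no induction and no null-overlap argument, because Hausdorff measure scales exactly under similarities; since $\psi_\gamma K_{t(\gamma)}\subset\G$, one has $\mu_H(\psi_\gamma K_{t(\gamma)})=\mathcal{H}^{d_H}(\psi_\gamma K_{t(\gamma)})/\mathcal{H}^{d_H}(\G)=\rho_\gamma^{d_H}H_{t(\gamma)}$ directly, and the covering upper bound only uses subadditivity anyway.

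The genuine gap is in the step you yourself flag as the main obstacle: the uniform multiplicity bound. The finite type property of Definition \ref{lemma11} constrains only pairs of cells that \emph{intersect}; it says nothing about two comparable cells lying within distance $s$ of one another without touching, so it cannot, as stated, cap the number of cells of $\Lambda_s$ meeting $B_s(p)$. There is no easy repair within that framework: proximity does not upgrade to intersection by passing to ancestors, and a chaining argument along a geodesic gives only a degree bound in the intersection graph of $\Lambda_s$, not a bound on the number of distinct cells the geodesic can visit. (The statement you want is in fact true for $\G$, but proving it requires the stronger, neighborhood-based finite type analysis of \cite{NW}, not Definition \ref{lemma11}.) The correct and standard tool is already sitting in your first step: by the Mauldin--Williams open set condition \cite{MW} stated after Definition \ref{def12}, the sets $\{\psi_\gamma O_{t(\gamma)}\}_{\gamma\in\Lambda_s}$ are pairwise disjoint open sets (disjointness propagates by induction along any antichain of paths). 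Each contains a Euclidean disk of radius at least $c\rho^3 s$ with $c=\min(r_1,r_2)$, where $r_i$ is the radius of some disk inside $O_i$; and whenever $\psi_\gamma K_{t(\gamma)}$ meets $B_s(p)$, the set $\psi_\gamma O_{t(\gamma)}$ lies in a disk of radius $Cs$ about $p$, since $O_{t(\gamma)}$ is bounded and at bounded distance from $K_{t(\gamma)}$. Comparing Lebesgue areas in $\mathbb{R}^2$ bounds the number of such $\gamma$ by $\big(C/(c\rho^3)\big)^2$, uniformly in $p$ and $s$. Substituting this packing argument for your final paragraph closes the gap and completes the proof.
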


There is a \textit{geodesic metric} $d_g$ on $\G$ equivalent to the Euclidean metric $d$. 
\begin{lemma}\label{lemma15}
	For $p,q\in\G$, let $d_g(p,q)=\inf\{|\gamma|:\gamma \text{ is a path connecting }p,q, \text{ and }\gamma\subset\G\}$. Then there exists a constant $c\geq 1$ such that 
	\[d(p,q)\leq d_g(p,q)\leq cd(p,q),\quad \forall p,q\in\G.\]
\end{lemma}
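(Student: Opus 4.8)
The lower bound $d(p,q)\le d_g(p,q)$ is immediate, since any path $\gamma\subset\G$ joining $p$ and $q$ has Euclidean length at least $|p-q|=d(p,q)$; taking the infimum over such $\gamma$ gives the claim. All the content lies in the upper bound, i.e.\ the quasiconvexity of $\G$, and I would organize the argument around the geodesic diameter $D:=\sup_{p,q\in\G}d_g(p,q)$, whose finiteness is the crux.

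First I would show that the three sides of the outer triangle $T=\mathrm{conv}(q_0,q_1,q_2)$ lie in $\G$. Writing $E_0$ for the boundary $\partial T$, a direct check shows $E_0\subset F_0E_0\cup F_1E_0\cup F_2E_0$: the bottom edge is covered by $F_1$ and $F_2$ because $\rho\ge\frac12$, while each slanted edge is covered by $F_0$ together with $F_1$ (resp.\ $F_2$), the two images meeting exactly at the point dividing the edge in ratio $\rho$ --- here the golden identity $\rho^2=1-\rho$ is precisely what makes the two images abut without gap or overlap. Since each $F_i(T)\subset T$, we have $\G=\bigcap_n\bigcup_{|w|=n}F_w(T)$; iterating the inclusion $E_0\subset\bigcup_iF_iE_0$ gives $E_0\subset\bigcup_{|w|=n}F_wE_0\subset\bigcup_{|w|=n}F_w(T)$ for every $n$, whence $E_0\subset\G$. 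Applying the same fact inside every cell, each edge of each cell $F_w\G$ is a straight segment of length $\rho_w\le\rho^{|w|}$ joining two of its corners and lying in $\G$.

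Next I would prove $D<\infty$. Fix $p\in\G$ and a nested sequence of cells $F_{u_n}\G\ni p$ with $|u_n|=n$ and $u_{n+1}=u_nk_{n+1}$, $k_{n+1}\in\{0,1,2\}$. Because $F_{k_{n+1}}$ fixes $q_{k_{n+1}}$, the point $c_n:=F_{u_n}(q_{k_{n+1}})$ is a common corner of $F_{u_n}\G$ and $F_{u_{n+1}}\G$; hence $c_n$ and $c_{n+1}$ are two corners of the single cell $F_{u_{n+1}}\G$ and are joined inside $\G$ by an edge of length $\le\rho^{n+1}$. Since $|c_n-p|\le\diam(F_{u_n}\G)\to0$, concatenating these edges produces a path in $\G$ from the corner $c_0\in\{q_0,q_1,q_2\}$ to $p$ of length at most $\sum_{n\ge0}\rho^{n+1}=\rho/(1-\rho)$. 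Thus every point of $\G$ is joined to a vertex of $\G$ by a path of bounded length, and combining with the three edge-paths between vertices gives $D\le1+2\rho/(1-\rho)<\infty$.

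Finally, for the comparison, self-similarity gives $d_g(x,y)\le\rho_wD$ whenever $x,y\in F_w\G$ (the $F_w$-image of a near-geodesic of $\G$ is an admissible path in $F_w\G\subset\G$). Given $p,q$ with $r:=d(p,q)$ small, I would take the partition of $\G$ into maximal cells $F_w\G$ with $\rho_w\diam\G\le r$, whose diameters then lie in $(\rho^2r,r]$. By Lemma \ref{lemma14} and the bounded overlap of equal-generation cells furnished by the finite type property (Definition \ref{lemma11}), only a bounded number $N_0$ of these cells meet $B_{2r}(p)$, independently of $p$ and $r$. As $\G$ is connected, $p$ and $q$ can be joined by a chain of at most $N_0$ such cells with consecutive members intersecting, and summing the within-cell bounds $\rho_wD\le rD/\diam\G$ yields $d_g(p,q)\le c\,d(p,q)$. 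The main obstacle is this last step: one must produce a chain of \emph{boundedly many} same-scale cells that does not stray far from the segment $pq$, which amounts to a uniform (linear) local connectivity of $\G$. I would derive it from the connectedness of $\G$ together with the finite type structure --- bounded overlap plus the volume doubling of Lemma \ref{lemma14} prevent any minimal chain from being forced away from $B_{Cr}(p)$ --- whereas the finiteness of $D$ in the previous step, where the golden ratio enters to close up the edges, is the conceptual heart of the proof.
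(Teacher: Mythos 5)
Your lower bound and your within-cell analysis are correct, and they in fact go beyond what the paper writes down: the paper's entire proof of this lemma is the remark that one should ``link $p,q$ with a bounded number of cells of diameter approximating to $d(p,q)$,'' relying on the finite type property. Your verification that $\partial\,\mathrm{conv}(q_0,q_1,q_2)\subset\G$ (via $\rho\geq\tfrac12$ on the bottom edge and $\rho+\rho^2=1$ on the slanted ones), the telescoping-corner argument giving $D=\sup_{p,q\in\G}d_g(p,q)\leq 1+2\rho/(1-\rho)<\infty$, and the scaling bound $d_g(x,y)\leq\rho_w D$ for $x,y\in F_w\G$ are all sound; they are exactly what makes each cell of such a chain contribute length $O(d(p,q))$.

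The gap is in the chaining step itself, which you correctly flag as the main obstacle but do not close, and the tool you propose cannot close it. Connectedness of $\G$ yields \emph{some} chain of scale-$r$ cells joining $p$ to $q$, and Lemma \ref{lemma14} together with bounded overlap bounds the number $N_0$ of such cells meeting $B_{2r}(p)$; but these two facts do not combine into ``a chain of at most $N_0$ cells'': nothing confines the chain furnished by connectedness to the cells meeting $B_{2r}(p)$, and volume doubling is silent about the cells far from $p$ through which a long detour would pass. Your sentence that ``bounded overlap plus volume doubling prevent any minimal chain from being forced away from $B_{Cr}(p)$'' asserts the needed uniform local connectivity rather than proving it; that property is genuinely stronger than connectedness plus Ahlfors regularity (a regular connected set can fold back on itself at all scales --- self-similar arcs failing the bounded-turning condition are of this kind --- making minimal chains of $r$-cells between points at distance $r$ arbitrarily long). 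What actually closes the gap is the finite type property (Definition \ref{lemma11}), as the paper's remark indicates, most cleanly in the form of a separation property: there is $\epsilon>0$ such that two cells of the same scale class $s$ either intersect or lie at Euclidean distance at least $\epsilon s$. To prove it, suppose $F_w\G\cap F_v\G=\emptyset$, and let $k\geq1$ be minimal so that the prefixes of $w,v$ lying in scale class $\rho^{-k}s$ give intersecting cells; by finite type this intersecting pair realizes one of finitely many relative positions $F_{w'}^{-1}F_{v'}$, and the disjoint pair of prefixes in class $\rho^{-(k-1)}s$ consists of descendants of bounded word length inside that pattern, hence belongs (up to similarity) to a finite list of disjoint pairs of compact sets, so its gap is at least $c\rho^{-(k-1)}s\geq cs$; since gaps can only increase when passing to subsets, $d(F_w\G,F_v\G)\geq cs$. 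Granting this, take $s\asymp d(p,q)/\epsilon$: the scale-$s$ cells containing $p$ and $q$ are then forced to intersect, and your bound $d_g\leq\rho_w D$ applied to those two cells gives $d_g(p,q)\leq c'\,d(p,q)$ with a chain of length two --- no counting of cells in a ball is needed.
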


The proof relies on the finite type property. The rough idea is to link $p,q$ with a bounded number of cells of diameter approximating to $d(p,q)$. 

Clearly, there is always a path admitting the infimum length between $p,q$. So the metric space $(\G,d_g)$ satisfies the so-called \textit{midpoint property}, i.e. for any $p,q\in \G$, there exists $p'$ so that $d_g(p,p')=d_g(p',q)=\frac{1}{2}d_g(p,q)$. The space $(\G,d_g,\mu_H)$ is then a \textit{fractional metric space}, see  \cite{B}, Definition 3.2.

We will return to look at the geometric properties of $\G$ listed in this section. But first, from Section 3 to 5, we will instead consider an infinite i.f.s. and the associated infinite graph. 

\section{Resistance forms on the infinite graph $V_1$}
The golden ratio Sierpinski gasket $\G$ can be realized as an invariant set of an infinite i.f.s. For convenience, we introduce some notations. For any word $w,w'\in \tilde{W}_*$, we write $ww'$ for the concatenation of $w,w'$. For $w=w_1w_2\cdots w_m$ and $0\leq l\leq m$, we write $[w]_l=w_1w_2\cdots w_l$. The following notations will be a little different from the standard ones.\vspace{0.15cm}

\noindent\textbf{Notation.}
Choose a set of finite words $W_1\subset\bigcup_{n=0}^\infty \{1,2\}^n\times \{0\}$ so that

\noindent1. for any $w\in \bigcup_{n=0}^\infty \{1,2\}^n\times \{0\}$, there exists $w'\in W_1$ such that $F_w=F_{w'}$;

\noindent2. for different words $w,w'\in W_1$, we have $F_w\neq F_{w'}$. 

In addition, based on $W_1$, we introduce some more notations.

\noindent\textit{(a). For $n\geq 1$, define $W_{1,n}=\{w\in W_1:|w|=n\}$;}

\noindent\textit{(b). For $m\geq 2$, define $W_m:=W_1^m=\{w_1w_2\cdots w_m: w_i\in W_1, 1\leq i\leq m\}$;}

\noindent\textit{(c). Write $V_0=\{q_i\}_{i=0}^2$ and for $m\geq 1$, $V_m=\bigcup_{w\in W_m}F_wV_0$. Denote $\bar{V}_m$ the closure of $V_m$;} 

\noindent\textit{(d). For distinct $p,q\in V_1$, we denote $p\sim q$ if and only if $p,q\in F_wV_0$ for some $w\in W_1$, which induce an infinite graph $(V_1,\sim)$. See Figure \ref{V1} for an illustration.}\vspace{0.15cm}

\begin{figure}[h]
	\centering
	\includegraphics[width=6cm]{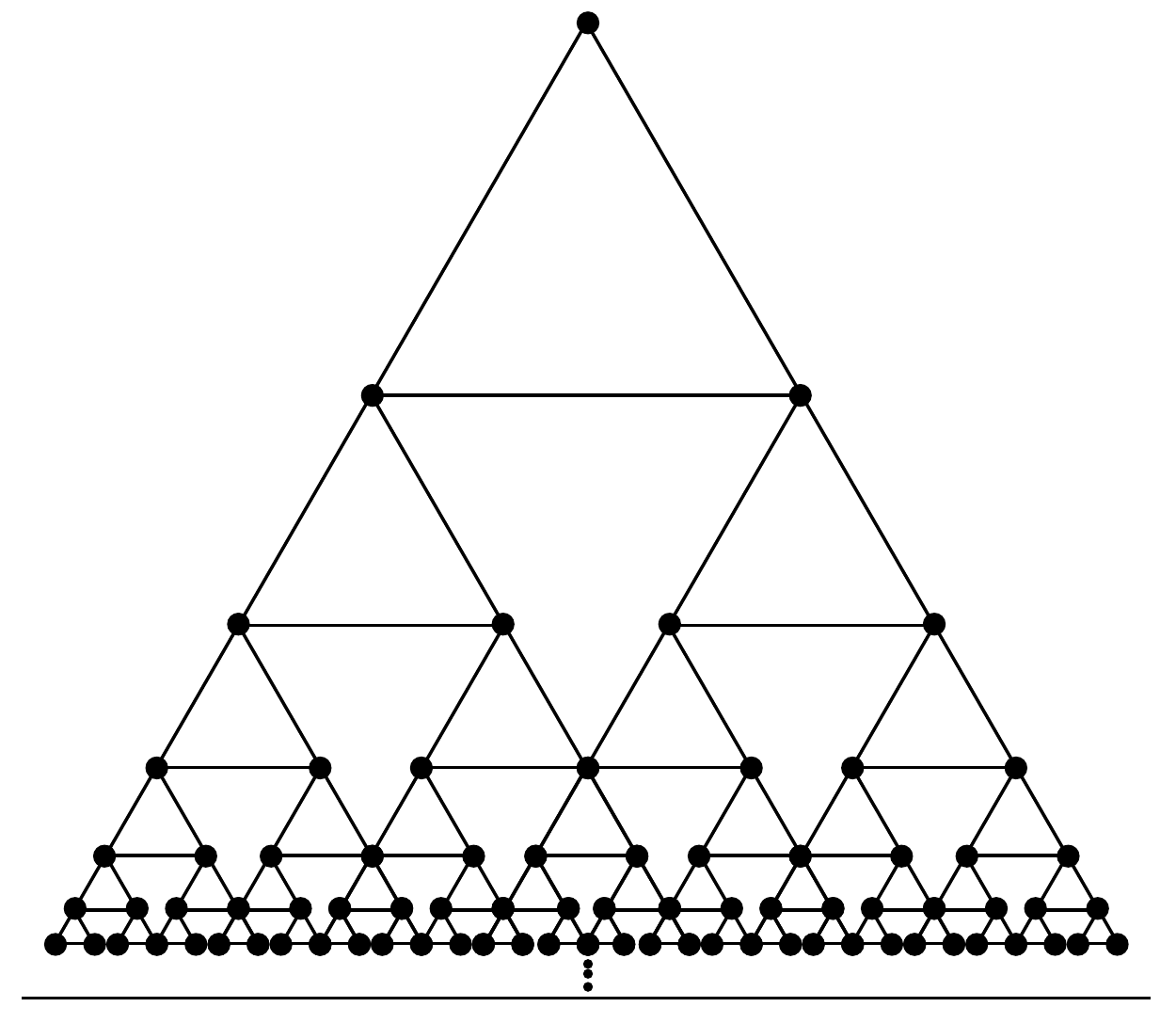}
	\caption{The infinite graph $(V_1,\sim)$. (The bottom line equals to $\bar V_1\setminus V_1$.)}\label{V1}
	\end{figure}

Obviously, we have $$\G=\overline{\bigcup_{w\in W_1}F_w\G},$$ and thus $\{F_w\}_{w\in W_1}$ is an infinite i.f.s associated with $\G$. See \cite{Mo} for more details about infinite i.f.s. The advantage of this i.f.s. lies in the fact that 
\[F_w\G\cap F_{w'}\G=F_wV_0\cap F_{w'}V_0, \quad\forall w\neq w'\in W_1.\]

In the rest of this section, we consider a class of resistance forms generated by decimation. For convenience of readers, we recall the general definition of resistance forms in the following. See \cite{ki3} for more details.

\begin{definition}\label{def21}
	Let $X$ be a set, and $l(X)$ be the space of all real-valued functions on $X$. A pair $(\mathcal{E},\mathcal F)$ is called a (non-degenerate) \emph{resistance form} on $X$ if it satisfies the following  conditions:
	
	\noindent (RF1). $\mathcal{F}$ is a linear subspace of $l(X)$ containing constants and $\mathcal{E}$ is a nonnegative symmetric quadratic form on $\mathcal F$; $\mathcal{E}(f):=\mathcal{E}(f,f)=0$ if and only if $f$ is constant on X.
	
	\noindent (RF2). Let `$\sim$' be an equivalent relation on $\mathcal{F}$ defined by $f\sim g$ if and only if $f-g$ is constant on X. Then $(\mathcal{F}/\sim, \mathcal{E})$ is a Hilbert space.
	
	\noindent (RF3). For any finite subset $V\subset X$ and  any $u\in l(V)$, there exists a function $f\in \mathcal{F}$ such that $f|_V=u$.
	
	\noindent (RF4). For any distinct $p,q\in X$, $R(p,q):=\sup\{\frac{|f(p)-f(q)|^2}{\mathcal{E}(f)}:f\in \mathcal{F},\mathcal{E}(f)>0\}$ is finite.
	
	\noindent (RF5). If $f\in \mathcal{F}$, then $\bar{f}={ \min\{\max\{f,0\}, 1\}}\in \mathcal{F}$ and $\mathcal{E}(\bar{f})\leq \mathcal{E}(f)$.
	\end{definition}
	
	Sometimes, we write $\mathcal F=Dom(\mathcal E)$, and abbreviate $(\mathcal E, \mathcal F)$ to $\mathcal E$ when no confusion occurs. It is well-known that $R(p,q)$ defined in (RF3) is a metric on $X$, named the \textit{effective resistance metric}. 

On the finite set $V_0$, a resistance form  $\mcD$ always has the form 
\begin{equation}\label{eqn21}
\mcD(f,g)=\frac 1 2\sum_{i,j} a_{i,j}\big(f(q_i)-f(q_j)\big)\big(g(q_i)-g(q_j)\big),\quad \forall f,g\in l(V_0),
\end{equation}
where $a_{i,i}=0$ and the $3\times 3$ matrix $(a_{i,j})$ is positive, symmetric and irreducible. For convenience, we write $\mcM$ for the collection of all resistance forms on $V_0$.  We view $\mcM$ as a subset of $\mathbb{R}^3$, which is not closed with induced topology.

Given a resistance form $\mcD$, we define a resistance form on $V_1$ associated with $\mcD$ in a self-similar manner, respecting the infinite i.f.s. $\{F_w\}_{w\in W_1}$.

\begin{definition}\label{def22}
	For $r>0$, $\mcD\in\mcM$, we define $\Psi_r\mcD$ as 
	\[\Psi_r\mcD(f,g)=\sum_{w\in W_1}r^{-|w|+1}\mcD(f\circ F_w,g\circ F_w),\]
	with $Dom(\Psi_r\mcD)=\{f\in l(V_1): \Psi_r\mcD(f)<\infty\}$.
\end{definition}

It is not hard to show that $\Big(\Psi_\lambda\mcD,Dom(\Psi_r\mcD) \Big)$ is a resistance form on $V_1$. However, to get a good resistance form, we need to restrict the range of $r$. 

\begin{proposition}\label{prop23}
Let $\mcD\in \mcM$ and $r<1$, then $Dom(\Psi_r\mcD)\subset C(\bar{V}_1)$ by a natural identification. In addition, if $\rho<r<1$, then $\Big(\Psi_r\mcD,Dom(\Psi_r\mcD) \Big)$ is a resistance form on $\bar{V}_1$, with the associated resistance metric $R(p,q)$ satisfying the estimate 
\begin{equation}\label{eqn22}
R(p,q)\leq \frac{4}{r^2(1-r)}R_0^*(\mcD)d(p,q)^{\frac{\log r}{\log \rho}}, \quad \forall p,q\in \bar{V}_1,
\end{equation}
where $R_0^*(\mcD)=\max_{p,q\in V_0}R_0(p,q)$ with $R_0$ being the resistance metric on $V_0$ associated with $\mcD$.
\end{proposition}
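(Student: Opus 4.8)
The plan is to handle the continuity statement first, then reduce the resistance estimate to a one-cell computation plus a chaining argument, and finally read off the axioms (RF1)--(RF5) of Definition \ref{def21}. For the inclusion $Dom(\Psi_r\mcD)\subset C(\bar V_1)$ the basic tool is a per-cell oscillation bound. Fixing $w\in W_1$ and discarding all but the $w$-term in Definition \ref{def22} gives $\mcD(f\circ F_w)\le r^{|w|-1}\Psi_r\mcD(f)$, whence, since the resistance metric of $\mcD$ is at most $R_0^*(\mcD)$ on $V_0$,
\[\max_{p,q\in F_wV_0}|f(p)-f(q)|^2\le R_0^*(\mcD)\,r^{|w|-1}\Psi_r\mcD(f).\]
As any neighbourhood of a point of the bottom line $\bar V_1\setminus V_1$ is covered by cells whose word length tends to infinity, and $r<1$, the right-hand side tends to $0$; thus $f$ extends continuously to $\bar V_1$, which is the asserted identification and uses only $r<1$.

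Next I would prove \eqref{eqn22} first for adjacent vertices. If $p,q\in F_wV_0$ are distinct, write $p=F_wp'$, $q=F_wq'$; the same one-term bound gives $\Psi_r\mcD(f)\ge r^{-|w|+1}|f(p)-f(q)|^2/R_0^*(\mcD)$, so by the variational formula in (RF4),
\[R(p,q)\le r^{|w|-1}R_0^*(\mcD).\]
Because the three vertices of $F_wV_0$ are pairwise at Euclidean distance equal to the similarity ratio $\rho_w=\rho^{|w|+1}$, one has $d(p,q)=\rho^{|w|+1}$; setting $\theta:=\frac{\log r}{\log\rho}$, so that $\rho^\theta=r$ and $\theta\in(0,1)$ precisely because $\rho<r<1$, this becomes $R(p,q)\le r^{-2}R_0^*(\mcD)\,d(p,q)^{\theta}$.

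The general estimate then follows by chaining together with the triangle inequality for $R$. Given distinct $p,q\in V_1$, I would join them by a path $p=x_0\sim x_1\sim\cdots\sim x_N=q$ in $(V_1,\sim)$, each consecutive pair lying in a common cell $F_{w_i}V_0$, so that $R(p,q)\le R_0^*(\mcD)\sum_i r^{|w_i|-1}$. The decisive geometric input, which I expect to be the main obstacle, is that the path can be chosen so that for each combinatorial level $k$ at most $4$ of the cells $F_{w_i}\G$ have word length $k+1$, and no cell coarser than the level $k_0$ determined by $\rho^{k_0+1}\ge d(p,q)>\rho^{k_0+2}$ occurs. This bounded-multiplicity covering by comparable-size cells is exactly what the finite type property provides and what underlies the geodesic comparison in Lemma \ref{lemma15}: one ascends through boundedly many cells on the $p$-side and the $q$-side up to scale $d(p,q)$, then crosses with $O(1)$ cells of that scale, using the hierarchical $F_1,F_2$-structure and the explicit overlaps of $\G$. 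Granting this,
\[\sum_i r^{|w_i|-1}\le 4\sum_{k\ge k_0}r^{k}=\frac{4r^{k_0}}{1-r}\le\frac{4}{r^2(1-r)}d(p,q)^{\theta},\]
since $r^{k_0}=r^{-2}r^{k_0+2}=r^{-2}(\rho^{k_0+2})^{\theta}<r^{-2}d(p,q)^{\theta}$; this is \eqref{eqn22} for $p,q\in V_1$. Quantitatively controlling the overlapping geometry so as to secure this multiplicity bound is where the real work lies.

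Finally I would pass from $V_1$ to $\bar V_1$: as $V_1$ is dense and both $R$ and $d$ are continuous there, \eqref{eqn22} extends to all $p,q\in\bar V_1$. The upper bound shows the identity map $(\bar V_1,d)\to(\bar V_1,R)$ is continuous, hence a homeomorphism onto a compact space once non-degeneracy is in hand. The remaining axioms are then routine: (RF1) holds because $\Psi_r\mcD(f)=0$ forces $f$ constant on every cell, hence on the connected graph $(V_1,\sim)$ and, by continuity, on $\bar V_1$; (RF5) is inherited term-by-term from the Markov property of $\mcD$; and (RF3) together with the completeness (RF2) follow from the standard resistance-form machinery of \cite{ki3}, using \eqref{eqn22} to turn energy-Cauchy sequences into uniformly convergent ones. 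The hypothesis $\rho<r<1$ enters throughout precisely by keeping $\theta=\frac{\log r}{\log\rho}\in(0,1)$, which makes $R$ comparable to a genuine sub-Euclidean metric on $\bar V_1$.
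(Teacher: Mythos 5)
Your first half is essentially the paper's argument and is sound in outline: a one-cell bound $\mcD(f\circ F_w)\le r^{|w|-1}\Psi_r\mcD(f)$, then chaining through comparable-size cells with bounded multiplicity per level, yielding $4\sum_{k\ge k_0}r^k=\frac{4r^{k_0}}{1-r}$ and hence \eqref{eqn22}. The geometric input you flag as "the main obstacle" and then grant is in fact the easiest part to make concrete, and the paper does it explicitly: for $w\in\{1,2\}^n$ one chains along the spine $p_l=F_{[w]_l}q_0$, where consecutive spine points lie in the common cell $F_{[w]_l}F_0V_0$ (this uses the golden-ratio identity $F_1q_0=F_0q_1$, $F_2q_0=F_0q_2$), so any point of $F_wV_1$ reaches $F_wq_0$ at resistance cost at most $R_0^*(\mcD)\sum_{l\ge n}r^l$; two nearby points are then joined through adjacent level-$n$ triangles, giving exactly your "at most $4$ cells per level" count. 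Also note your per-cell continuity argument alone does not give continuity at bottom-line points (two vertices near such a point may lie in different small cells, and one must chain between them), but since your chaining estimate supplies a uniform modulus $R\le C d^\theta$ on $V_1$, continuity follows from it as in the paper; this part needs only $r<1$.

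The genuine gap is in the second assertion. You dismiss (RF2)--(RF4) on $\bar V_1$ as "routine" modulo "non-degeneracy", but non-degeneracy --- that functions of finite energy separate points of $\bar V_1$, in particular points of the bottom line $\bar V_1\setminus V_1$ (which contains $q_1,q_2$) --- is precisely the crux, and it is the only place where $\rho<r$ is genuinely used. Your stated explanation of the hypothesis, that $\rho<r<1$ keeps $\theta=\frac{\log r}{\log\rho}\in(0,1)$ "so that $R$ is comparable to a sub-Euclidean metric", is incorrect: the upper bound \eqref{eqn22} and the inclusion $Dom(\Psi_r\mcD)\subset C(\bar V_1)$ hold for every $r<1$, and no lower bound on $R$ is claimed or needed here. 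What actually fails for $r\le\rho$ is separation: the paper's remark notes the restriction is sharp, since then no finite-energy function has $f(q_1)=0$, $f(q_2)=1$, so the completion of $(V_1,R)$ collapses the bottom line and cannot be $\bar V_1$; in particular (RF3) and (RF4) fail on $\bar V_1$. The missing idea is the paper's computation with restrictions of affine functions: for linear $f$,
\[
\Psi_r\mcD(f)=\sum_{n=1}^\infty \#W_{1,n}\,r^{-n+1}\rho^{2(n+1)}\mcD(f|_{V_0})\asymp\sum_{n=1}^\infty\Big(\frac{\rho}{r}\Big)^n,
\]
which is finite exactly when $r>\rho$ (using $\#W_{1,n}\asymp\rho^{-n}$); these functions separate all points of $\bar V_1$, making the extension of the identity map $(\bar V_1,d)\to (X,R)$ injective, hence (by compactness) a homeomorphism onto the completion, which is what identifies the abstract resistance space given by Kigami's machinery with $\bar V_1$ itself. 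Without this step your proposal proves at best that $\Psi_r\mcD$ is a resistance form on some quotient-completion of $V_1$, not on $\bar V_1$.
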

\begin{proof}
Obviously, $R(p,q)\leq R_0^*(\mcD)r^{n-1}$ for any distinct $p,q\in F_wV_0$ with $w\in W_{1,n}$ and $n\geq 1$. For $w\in\{1,2\}^n$, write $[w]_l=w_1w_2\cdots w_l$ and  $p_l=F_{[w]_l}(q_0)$, with $0\leq l\leq n$, then  
\[
R(p_i,p_j)\leq \sum_{l=i}^{j-1} R(p_l,p_{l+1})\leq R_0^*(\mcD)\sum_{l=i}^{j-1} r^l<R_0^*(\mcD)\frac{r^i}{1-r}, \quad \forall 0\leq i<j\leq n.
\]
In particular, this implies that $R(p,q)<\frac{2r^n}{1-r}R_0^*(\mcD)$ for any $p,q\in F_wV_1$ and $w\in \{1,2\}^n$. Now, if $p,q\in V_1$ and $d(p,q)<\rho^{n+1}$, then there exist $w,w'\in \{1,2\}^n$ such that $p\in F_{w}V_1, q\in F_{w'}V_1$, and $F_{w}V_1\cap F_{w'}V_1\neq\emptyset$, which implies that $R(p,q)\leq \frac{4r^n}{1-r}R_0^*(\mcD)$. As a consequence, we have 
\begin{equation}\label{eqn23}
R(p,q)\leq \frac{4}{r^2(1-r)}R_0^*(\mcD)d(p,q)^{\frac{\log r}{\log \rho}}, \quad \forall p,q\in V_1.
\end{equation}

On the other hand, for any $f\in Dom(\Psi_r\mcD)$, we immediately have 
\begin{equation}\label{eqn24}
|f(p)-f(q)|\leq \big(R(p,q)\Psi_r\mcD(f)\big)^{1/2}.
\end{equation}
Combining (\ref{eqn23}) and (\ref{eqn24}), we then get $Dom(\Psi_r\mcD)\subset C(\bar{V}_1)$.

To show the second assertion, we let $(X,R)$ be the completion of $(V_1,R)$, and recall Theorem 2.3.10 in \cite{ki3} to get that $\Big(\Psi_\lambda\mcD,Dom(\Psi_r\mcD) \Big)$ extends to be a resistance form on $X$. It suffices to show that the identity map $Id:V_1\to V_1$ extends to an homeomorphism from $(\bar{V}_1,d)$ to $(X,R)$ under the assumption $\rho<r<1$. First, by (\ref{eqn23}), $Id$ is continuous from $(V_1,d)$ to $(V_1,R)$. Next, let $f\in l(V_1)$ be a restriction of a linear function on $\mathbb{R}^2$. We have 
\[
\begin{aligned}
\Psi_r\mcD(f)&=\sum_{w\in W_1}r^{-|w|+1}\mcD(f\circ F_w)=\sum_{n=1}^\infty\sum_{w\in W_{1,n}}r^{-n+1}\mcD(f\circ F_w)\\
&=\sum_{n=1}^\infty \#W_{1,n}r^{-n+1}\rho^{2(n+1)}\mcD(f|_{V_0}),
\end{aligned}
\] 
where the last equality follows from the fact that $f$ is linear. Since $\#W_{1,n}\asymp\rho^{-n}$, we have $\Psi_r\mcD(f)<\infty$ when $r>\rho$, so that $f\in Dom(\Psi_r\mcD)$. Noticing that for any points $p\neq q\in \bar{V}_1$, we can find a linear function $f$ such that $f(p)\neq f(q)$, we have $Id$ is injective. Finally, due to the fact that $(\bar{V}_1,d)$ is a compact Hausdorff space and $(X,R)$ is the completion of $(V_1,R)$, we then have that $Id$ is an homeomorphism from $(\bar{V}_1,d)$ to $(X,R)$. This implies that $\Big(\Psi_r\mcD,Dom(\Psi_r\mcD) \Big)$ is a resistance form on $\bar{V}_1$, and (\ref{eqn22}) follows immediately from (\ref{eqn23}).
\end{proof}

\noindent\textbf{Remark.} The restriction $\rho<r<1$ is sharp. If $r\leq \rho$, there is no $f\in Dom(\Psi_r\mcD)$ such that $f(q_1)=0$ and $f(q_2)=1$. In fact, for any $f\in C(\bar{V}_1)$ with $f(q_1)=0$ and $f(q_2)=1$, the total  energy of $f$ on the union of the cells $F_wV_0,w\in W_{1,n}\cup W_{1,n+1}$ (noticing that this union will induce a connected subgraph in $(V_1,\sim)$) will be bounded  away from $0$ as $n\to\infty$. 

\section{A renormalization map}
In Proposition \ref{prop23}, we have shown that $\Psi_r\mcD$ extends to be a resistance form on $\bar{V}_1$ when $\rho<r<1$. It is natural to trace it back to $V_0$, noticing that $V_0\subset \bar V _1$.

\begin{definition}\label{def31}
Let $(\mcD_1,\mathcal{F}_1)$ be a resistance form on $\bar V_1$, we write
\[[\mcD_1]_{V_0}(u)=\inf\{\mcD_1(f):f|_{V_0}=u, f\in \mathcal{F}_1\}, \quad\forall u\in l(V_0).\]
Note that $[\mcD_1]_{V_0}$ is a resistance form on $V_0$ by the polarization identity.
For $\rho<r<1$ and $\mcD\in \mcM$, we define $\mcR_r\mcD=[\Psi_r\mcD]_{V_0}$, and call $\mcR_r$ the \emph{renormalization map}. Sometimes, we also write $\mcR(r,\mcD):=\mcR_r(\mcD)$. 
\end{definition}

The main purpose of this section is to show the continuity of  the map $\mcR(r,\mcD)$.

\begin{theorem}\label{thm32}
The map $\mcR(r,\mcD)$  is jointly continuous from $(\rho,1)\times \mcM$ to $\mcM$.
\end{theorem}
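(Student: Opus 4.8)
The plan is to separate the two variables. I first record two structural properties of $\mcR_r$ as a function of $\mcD$: it is positively homogeneous, $\mcR_r(\lambda\mcD)=\lambda\mcR_r\mcD$, and monotone, $\mcD\le\mcD'\Rightarrow\mcR_r\mcD\le\mcR_r\mcD'$. Both follow at once from Definitions \ref{def22} and \ref{def31}, since $\Psi_r$ is linear and nondecreasing in $\mcD$ and the trace is an infimum. Near a fixed $\mcD_0\in\mcM$, which is positive definite, closeness of $\mcD'$ to $\mcD_0$ in $\mathbb{R}^3$ yields a two-sided comparison $(1-\varepsilon)\mcD_0\le\mcD'\le(1+\varepsilon)\mcD_0$, whence $(1-\varepsilon)\mcR_r\mcD_0\le\mcR_r\mcD'\le(1+\varepsilon)\mcR_r\mcD_0$ simultaneously for all $r$. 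Testing $\Psi_r\mcD_0$ against the restriction of an affine function, exactly as in the proof of Proposition \ref{prop23}, bounds $\mcR_r\mcD_0(u)$ uniformly for $r$ in compact subintervals of $(\rho,1)$ and $u$ bounded; hence $\mcD\mapsto\mcR_r\mcD$ is continuous uniformly in $r$. By the triangle inequality, joint continuity at $(r_0,\mcD_0)$ reduces to continuity of $r\mapsto\mcR_r\mcD_0$ for fixed $\mcD_0$.

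For fixed $\mcD_0$, the weights $r^{-|w|+1}$ are nonincreasing in $r$, so $\Psi_r\mcD_0(f)$ is nonincreasing in $r$ for every $f$ while $Dom(\Psi_r\mcD_0)$ grows as $r$ increases; consequently $r\mapsto\mcR_r\mcD_0(u)$ is nonincreasing. For lower semicontinuity I take $r_k\to r_0$ with energy minimizers $h_k$, so $\Psi_{r_k}\mcD_0(h_k)=\mcR_{r_k}\mcD_0(u)$. These have uniformly bounded energy and, by (\ref{eqn24}) and (\ref{eqn22}), are uniformly H\"older on the compact space $(\bar V_1,d)$ with $h_k|_{V_0}=u$; Arzel\`a--Ascoli gives a subsequence $h_k\to h_*\in C(\bar V_1)$ with $h_*|_{V_0}=u$. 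Fatou's lemma applied to the series then yields $\liminf_k\Psi_{r_k}\mcD_0(h_k)\ge\Psi_{r_0}\mcD_0(h_*)\ge\mcR_{r_0}\mcD_0(u)$, i.e.\ lower semicontinuity. Combined with monotonicity this already gives right-continuity.

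It remains to prove upper semicontinuity as $r\uparrow r_0$, that is $\limsup_{r\uparrow r_0}\mcR_r\mcD_0(u)\le\mcR_{r_0}\mcD_0(u)$. Let $h_0$ be the minimizer at $r_0$. For $r<r_0$ the weights increase, and the natural competitor $h_0$ gives $\mcR_r\mcD_0(u)\le\Psi_r\mcD_0(h_0)$; by monotone convergence $\Psi_r\mcD_0(h_0)\downarrow\Psi_{r_0}\mcD_0(h_0)=\mcR_{r_0}\mcD_0(u)$ as $r\uparrow r_0$, provided $h_0\in Dom(\Psi_{r'}\mcD_0)$ for some $r'<r_0$. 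Writing $e_n=\sum_{w\in W_{1,n}}r_0^{-n+1}\mcD_0(h_0\circ F_w)$, one has $\sum_{w}(r')^{-|w|+1}\mcD_0(h_0\circ F_w)=\sum_n (r_0/r')^{n-1}e_n$, so I must upgrade the mere convergence $\sum_n e_n<\infty$ to a geometric decay $e_n\le C\gamma^n$ with some $\gamma<1$: then the series converges for every $r'\in(\max\{\rho,r_0\gamma\},r_0)$, an interval that is nonempty, and left-continuity follows.

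This geometric energy decay is the main obstacle. The naive route---bounding each $\mcD_0(h_0\circ F_w)$ by the resistance estimate of Proposition \ref{prop23} and summing---fails, because the cell count $\#W_{1,n}\asymp\rho^{-n}$ exactly cancels the per-cell gain, reflecting that worst-case bounds overcount. What is really needed is that the energy of the harmonic minimizer does not concentrate near the bottom boundary $\bar V_1\setminus V_1$ as the scale refines. I would establish this by an oscillation-decay (maximum-principle) argument for $h_0$ across successive Euclidean scales indexed by $\{1,2\}^m$, using the connectivity of $\G$ (Lemma \ref{lemma15}) together with the scale-$\rho^m$ resistance bound (\ref{eqn22}), and comparing against the affine interpolant, whose tail energy is explicitly geometric and uniformly small. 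Alternatively, one may bypass the regularity of $h_0$ by using a competitor that equals $h_0$ on cells of depth $\le N$ and the affine interpolant on deeper cells, glued across the depth-$N$ interface; the same non-concentration estimate controls the gluing cost and yields the $\limsup$ bound after letting $N\to\infty$. Either way, the heart of the matter is this uniform no-concentration estimate for harmonic energy, while the remaining steps are soft.
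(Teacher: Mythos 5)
Your overall architecture is sound and partly overlaps the paper's: the reduction to a fixed $\mcD_0$ via homogeneity, monotonicity and the two-sided comparison $(1-\varepsilon)\mcD_0\le\mcD'\le(1+\varepsilon)\mcD_0$ is correct (and is a tidy structural simplification the paper does not make); the monotonicity of $r\mapsto\mcR_r\mcD_0(u)$ is right; and your lower-semicontinuity argument (energy minimizers, uniform H\"older bounds from (\ref{eqn22}) and (\ref{eqn24}), Arzel\`a--Ascoli, Fatou) is essentially the paper's. But the proof is not complete: the left-continuity step, which you yourself call the heart of the matter, is only sketched and is deferred to a ``uniform no-concentration estimate for harmonic energy'' that you never establish. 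As it stands this is a genuine gap. Your route (a) --- geometric decay $e_n\le C\gamma^n$ of the level energies of the harmonic minimizer --- is a substantive quantitative claim about harmonic functions that is nowhere proved, and would require oscillation-decay or Harnack-type machinery not available at this stage of the development; so the key estimate is simply missing.

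Moreover, the gap rests on a misdiagnosis: no property of the \emph{harmonic} minimizer is needed at all. The paper's Proposition \ref{prop33} settles exactly this point with a deterministic gluing estimate valid for \emph{every} finite-energy function $f$. Choose $n$ so that the tail energy $\sum_{l\ge n}\sum_{w\in W_{1,l}}r_0^{-l+1}\mcD_0(f\circ F_w)$ is at most $\varepsilon$. Then (i) the interface sum $r_0^{-n}\sum_i\big(f(p_i)-f(p_{i+1})\big)^2$ over consecutive level-$n$ top vertices is bounded by $c_1\varepsilon$, because consecutive $p_i$'s are joined by chains of cells of levels $\ge n$ with uniformly comparable resistance --- a property of the graph, not of $f$; and (ii) the piecewise-linear (affine-in-$x$) interpolant through the values $f(p_i)$ has energy on levels $\ge n$ at most $c_2r_0^{-n}\sum_i\big(f(p_i)-f(p_{i+1})\big)^2$, uniformly in $n$, and lies in $Dom(\Psi_{r'}\mcD_0)$ for every $r'>\rho$. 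Gluing $f$ (on shallow levels) to this interpolant (on deep levels) produces a competitor with the same boundary values, energy at $r_0$ within $O(\sqrt{\varepsilon})$ of the minimum, and finite energy for all $r'\in(\rho,r_0)$; your own monotone/dominated convergence computation then finishes left-continuity. In other words, your route (b) does work, but the gluing cost is controlled by the tail energy of $f$ itself --- which tends to $0$ automatically for any function of finite energy --- so neither a geometric decay rate nor any maximum-principle or non-concentration input is required. The dichotomy you pose (either geometric decay of $e_n$ or a no-concentration estimate) is a false dilemma, and the estimate you flagged as the hard obstacle is in fact soft.
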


To prove Theorem \ref{thm32}, we need a study on the regularity of the resistance form $\Psi_r\mcD$.

\begin{proposition}\label{prop33}
Let $\mcD\in \mcM$ and $\rho<r_1<r_2<1$. Then

(a). $Dom(\Psi_{r_1}\mcD)$ depends only on $r_1$, and we have $Dom(\Psi_{r_1}\mcD)\subset Dom(\Psi_{r_2}\mcD)$. 

(b). $Dom(\Psi_{r_1}\mcD)$ is dense in $Dom(\Psi_{r_2}\mcD)$ in the sense that for any $f\in Dom(\Psi_{r_2}\mcD)$ and $\varepsilon>0$, there exists $g\in Dom(\Psi_{r_1}\mcD)$ such that
\[\Psi_{r_2}\mcD(f-g)<\varepsilon,\text{ and }f|_{V_0}=g|_{V_0}.\]
Moreover, $Dom(\Psi_{r_1}\mcD)$ is dense in $C(V_1)$ so that the resistance form is regular.
\end{proposition}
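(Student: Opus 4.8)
The plan is to dispatch (a) and the two assertions of (b) separately, resting everything on two elementary facts: all resistance forms on the three-point set $V_0$ are mutually comparable, and the weights $r^{-|w|+1}$ are monotone in $r$. For (a), I would first note that $\mcM$ consists of positive definite quadratic forms on the finite-dimensional space $l(V_0)/\text{constants}$, so any $\mcD,\mcD'\in\mcM$ satisfy $c\,\mcD'\le\mcD\le C\,\mcD'$ for some $c,C>0$; composing with the $F_w$ and summing gives $c\,\Psi_r\mcD'(f)\le\Psi_r\mcD(f)\le C\,\Psi_r\mcD'(f)$ for every $f\in l(V_1)$, so finiteness of the energy, hence the domain, is insensitive to the choice of $\mcD$, which is exactly the assertion that $Dom(\Psi_{r_1}\mcD)$ depends only on $r_1$. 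The inclusion $Dom(\Psi_{r_1}\mcD)\subset Dom(\Psi_{r_2}\mcD)$ is then immediate from $r_2^{-|w|+1}\le r_1^{-|w|+1}$ (valid since $|w|\ge 1$ and $1<r_2^{-1}<r_1^{-1}$), which yields the termwise domination $\Psi_{r_2}\mcD(f)\le\Psi_{r_1}\mcD(f)$.

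For the density in (b), write $E_n(f)=\sum_{w\in W_{1,n}}\mcD(f\circ F_w)\ge 0$, so that $\Psi_r\mcD(f)=\sum_{n\ge 1}r^{-n+1}E_n(f)$. Fix $f\in Dom(\Psi_{r_2}\mcD)$ and $\varepsilon>0$, and let $\delta_N:=\sum_{n>N}r_2^{-n+1}E_n(f)$, the tail of a convergent series, which is small for large $N$. I would define $g=g_N$ to coincide with $f$ on all vertices lying in cells $F_wV_0$ with $|w|\le N$ (the ``shallow'' region $\hat V_N$), and on the remaining ``deep'' cells clustering at the bottom line to be the $\Psi_{r_1}\mcD$-harmonic (energy-minimizing) extension of $f|_{\hat V_N}$. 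Since $V_0\subset\hat V_N$ we get $g|_{V_0}=f|_{V_0}$ for free, and $f-g$ vanishes on every cell of level $\le N$, so $E_n(f-g)=0$ for $n\le N$ and
\[
\Psi_{r_2}\mcD(f-g)=\sum_{n>N}r_2^{-n+1}E_n(f-g)\le 2\delta_N+2\sum_{n>N}r_2^{-n+1}E_n(g).
\]
The harmonic replacement has finite $\Psi_{r_1}$-energy, so $g\in Dom(\Psi_{r_1}\mcD)$; this is precisely where $r_1>\rho$ enters, as in the convergence computation $\sum_n\#W_{1,n}\,r_1^{-n+1}\rho^{2(n+1)}<\infty$ in the proof of Proposition \ref{prop23}.

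To control the last sum I would exploit the gain factor $r_2^{-n+1}=(r_1/r_2)^{n-1}r_1^{-n+1}\le(r_1/r_2)^{N}r_1^{-n+1}$ for $n>N$, giving $\sum_{n>N}r_2^{-n+1}E_n(g)\le(r_1/r_2)^N\,\Psi_{r_1}\mcD(g;\text{deep})$, and then bound the deep $r_1$-energy cell by cell. By self-similarity each deep copy $F_v\G$ with $|v|=N$ contributes $r_1^{-N}$ times a trace-form energy $\mcR_{r_1}\mcD(u_v)\lesssim\mathrm{osc}(f;F_v\G)^2$, while composing the resistance–Hölder bound $|f(p)-f(q)|^2\le R(p,q)\Psi_{r_2}\mcD(f)$ with \eqref{eqn22} on the scaled copy gives $\mathrm{osc}(f;F_v\G)^2\lesssim r_2^{N}\,\Psi_{r_2}\mcD(f;F_v\G)$. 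Summing over the $\asymp\rho^{-N}$ deep copies produces $\Psi_{r_1}\mcD(g;\text{deep})\lesssim(r_2/r_1)^N\delta_N$, so that $(r_1/r_2)^N\Psi_{r_1}\mcD(g;\text{deep})\lesssim\delta_N\to 0$, whence $\Psi_{r_2}\mcD(f-g)\lesssim\delta_N<\varepsilon$ for $N$ large. Regularity I would then obtain not by affine interpolation but by Stone–Weierstrass: $Dom(\Psi_{r_1}\mcD)\subset C(\bar V_1)$ (Proposition \ref{prop23}) is a point-separating subalgebra containing the constants (closure under bounded products is the standard resistance-form estimate $\mcE(fg)\lesssim\|f\|_\infty^2\mcE(g)+\|g\|_\infty^2\mcE(f)$, and separation comes from (RF3)–(RF4)), hence uniformly dense in $C(\bar V_1)$, which is exactly regularity.

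I expect the genuinely delicate point to be the consistency and energy-bookkeeping of the deep replacement in the presence of the overlaps built into the finite type structure: because $F_{122}=F_{211}$ as maps, distinct level-two cells share an \emph{entire} sub-cell, so a naive per-cell affine or constant replacement is not even well defined there. This forces the use of the globally energy-minimizing $\Psi_{r_1}\mcD$-harmonic extension (automatically consistent) together with a careful choice of the shallow/deep interface so that the finite-type identifications do not straddle it. Controlling the resulting double-counting of overlapping deep cells—and thereby upgrading the heuristic $\Psi_{r_1}\mcD(g;\text{deep})\lesssim(r_2/r_1)^N\delta_N$ into a rigorous inequality—is where the finite type property must be invoked, and this is the main obstacle of the argument.
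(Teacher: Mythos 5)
Your part (a) and your Stone--Weierstrass argument for regularity are fine (the latter is a legitimate alternative to the paper's route, which instead repeats its interpolation argument), and the skeleton of (b) -- cut the tail at level $N$, replace $f$ in the deep region, use the weight transfer $r_2^{-n+1}\le (r_1/r_2)^{N}r_1^{-n+1}$ for $n>N$ -- is sound. The genuine gap is the pivotal inequality $\Psi_{r_1}\mcD(g;\mathrm{deep})\lesssim (r_2/r_1)^N\delta_N$, which you never establish and which your sketch cannot establish as written. The difficulty is structural: $g$ is the \emph{global} $\Psi_{r_1}\mcD$-minimizer subject to data on $\hat V_N\cup V_0$, so its restriction to a deep copy $F_v(\bar V_1)$, $v\in\{1,2\}^N$, is harmonic with respect to boundary values that are themselves unknowns of the global problem (its values on the bottom line and on the sub-cells shared with neighbouring copies are not values of $f$); hence the per-copy bound ``$\Psi_{r_1}\mcD(g\circ F_v)\lesssim \mathrm{osc}(f;F_v\G)^2$'' follows neither from the maximum principle nor from trace-form considerations on that copy alone. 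The standard way to bound a minimizer's energy is to exhibit a competitor, and a competitor built copy-by-copy (harmonic or affine on each $F_v(\bar V_1)$) is not even well defined, precisely because of the overlaps $F_{122}=F_{211}$ -- the obstacle you flag in your last paragraph but leave unresolved; no ``careful choice of interface'' can help, since entire sub-copies, not just interface points, are shared. What fills this hole in the paper is an explicit \emph{globally defined} competitor: the function $u$ depending only on the $x$-coordinate, piecewise linear between the abscissas of the level-$N$ apexes $\{F_wq_0\}_{w\in W_{1,N}}$ and matching $f$ there. Being a function of $x$ alone, it is automatically consistent across all overlaps; counting the $\asymp\rho^{N-l}$ level-$l$ cells above each strip gives its deep $r$-energy $\lesssim r^{-N}\sum_i\big(f(p_i)-f(p_{i+1})\big)^2$ for every $r>\rho$ (finiteness for $r=r_1$, smallness for $r=r_2$), and the chaining estimate $r_2^{-N}\sum_i\big(f(p_i)-f(p_{i+1})\big)^2\lesssim\delta_N$ (the first display in the paper's proof) closes the argument. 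With this competitor in hand, your minimality-plus-weight-transfer scheme does work -- in fact the competitor's deep $r_2$-energy is then already $\lesssim\delta_N$, making the $(r_1/r_2)^N$ bookkeeping unnecessary -- but at that point you have reproduced the paper's construction: the piecewise-linear interpolant is not a dispensable convenience, it is the content of the proof.

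One smaller slip: $V_0\not\subset\hat V_N$. The points $q_1,q_2$ lie on the bottom line $\bar V_1\setminus V_1$ and are vertices of no cell $F_wV_0$, so $g|_{V_0}=f|_{V_0}$ does not come ``for free''; you must add $V_0$ to the constraint set by hand. This is harmless, since $\Psi_{r_1}\mcD$ is a resistance form on all of $\bar V_1$ by Proposition \ref{prop23}, so traces and energy-minimizing extensions over the finite set $\hat V_N\cup V_0$ are available.
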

\begin{proof}
(a) is obvious, we only need to  prove (b). Let $f\in Dom(\Psi_{r_2}\mcD)$, and choose $n$ large enough so that 
\begin{equation}\label{eqn31}
\sum_{l=n}^\infty \sum_{w\in W_{1,l}}r_2^{-l+1}\mcD(f\circ F_w)<\varepsilon.
\end{equation}
For convenience, we rename the vertices $\{F_wq_0\}_{w\in W_{1,n}}$ to be $\{p_i\}_{i=1}^{N}$ with $N=\# W_{1,n}$, so that for each $i$, $p_i$ is on the left of $p_{i+1}$. Then, it is not hard to see
\[\begin{aligned}
&r_2^{-n}\big(\sum_{i=1}^{N-1}\big(f(p_i)-f(p_{i+1})\big)^2+\big(f(q_1)-f(p_{1})\big)^2+\big(f(q_2)-f(p_N)\big)^2\big)\\
\leq &c_1\big(\sum_{l=n}^\infty \sum_{w\in W_{1,l}}r_2^{-l+1}\mcD(f\circ F_w)\big)< c_1\varepsilon, 
\end{aligned}\]
where $c_1$ is a constant depending on $\mcD$ and $r_2$, but not on $n$.

Write $x_i$ for the $x$-coordinate of $p_i$, so we have $0<x_1<x_2<\cdots<x_N<1$. We introduce a piecewise linear function $u$ on $\mathbb{R}^2$ such that 

\noindent 1. $u(x,y)$ depends only on $x$;

\noindent 2. $u(q_1)=f(q_1)$, $u(q_2)=f(q_2)$, and $u(p_i)=f(p_i)$, $1\leq i\leq N$;

\noindent 3. $u(x,0)$ is linear on each interval $(0,x_1)$, $(x_N,1)$ and $(x_i,x_{i+1})$, $1\leq i\leq N-1$.

We define $g\in l(V_1)$ as
\[g(p)=\begin{cases}
f(p),\text{ if }p\in \bigcup_{l=1}^{n-1}\bigcup_{w\in W_{1,l}}\{F_wq_0\},\\
u(p),\text{ if }p\in \bigcup_{l=n}^{\infty}\bigcup_{w\in W_{1,l}}\{F_wq_0\}.
\end{cases}\]
By a similar estimate as in Proposition \ref{prop23}, one can check that $g\in Dom(\Psi_{r_1}\mcD)$, and 
\[
\sum_{l=n}^\infty \sum_{w\in W_{1,l}}r_2^{-l+1}\mcD(g\circ F_w)
\leq c_2r_2^{-n}\big(\sum_{i=1}^{N-1}\big(f(p_i)-f(p_{i+1})\big)^2+\big(f(q_1)-f(p_{1})\big)^2+\big(f(q_2)-f(p_N)\big)^2\big),
\]
where $c_2$ depends only on $\mcD$ and $r_2$. So we have $\Psi_{r_2}\mcD(f-g)\leq c_3\varepsilon$ for some constant $c_3$. Since $\varepsilon$ can be arbitrarily small, we have that $Dom(\Psi_{r_1}\mcD)$ is dense in $Dom(\Psi_{r_2}\mcD)$. Finally, the claim that $Dom(\Psi_{r_1}\mcD)$ is dense in $C(\bar{V}_1)$ follows from a same argument. 
\end{proof}

\begin{proof}[Proof of Theorem \ref{thm32}]
	Let $r_n\to r\in (\rho,1)$ and $\mcD_n\to \mcD\in \mcM$. Also, let $u\in l(V_0)$.
	
	First, we show that
	\begin{equation}\label{eqn32}
	\limsup_{n\to\infty}\mcR(r_n,\mcD_n)(u)\leq \mcR(r,\mcD)(u).
	\end{equation}
	We define $f$ to be the unique function in $Dom(\Psi_r\mcD)$ such that $f|_{V_0}=u$ and
	\[\mcR(r,\mcD)(u)=\Psi_r\mcD(f).\]
	By Proposition \ref{prop33}, for any $\varepsilon>0$, there is $f_\varepsilon$ such that $f_\varepsilon|_{V_0}=u$,  $f_\varepsilon\in Dom(\Psi_{r_n}\mcD_n)$ for any $n\geq 1$,  and 
    \[\Psi_r\mcD(f_\varepsilon)\leq\Psi_r\mcD(f)+\varepsilon.\]
    As a consequence, we have 
    \[\limsup_{n\to\infty}\mcR(r_n,\mcD_n)(u)\leq \lim_{n\to\infty}\Psi_{r_n}\mcD_n(f_\varepsilon)=\Psi_r\mcD(f_\varepsilon)\leq \mcR(r,\mcD)(u)+\varepsilon,\]
    where the equality is due to the dominated convergence theorem. Since $\varepsilon$ can be arbitrarily chosen, we get (\ref{eqn32}). 
    
    Next, for each $n$, let $f_n$ be the unique function in $Dom(\Psi_{r_n}\mcD_n)$ such that $f_n|_{V_0}=u$ and 
    \[\mcR(r_n,\mcD_n)(u)=\Psi_{r_n}\mcD_n(f_n).\]
    Then $\{f_n\}_{n\geq 1}$ is uniformly bounded by the Markov property (RF5). In addition, $\Psi_{r_n}\mcD_n(f_n)\leq \mcR(r_*,\mcD_n)(u)$ with $r_*=\inf_{n\geq 1} r_n$, so $\{\Psi_{r_n}\mcD_n(f_n)\}_{n\geq 1}$ is a bounded sequence. By the estimates (\ref{eqn22}) and (\ref{eqn24}), we have
    \[|f_n(p)-f_n(q)|\leq c\Big(d(p,q)^{\frac{\log r^*}{\log \rho}}\sup_{n\geq 1}\Psi_{r_n}\mcD_n(f_n)\Big)^{1/2},\quad\forall n\geq 1,\forall p,q\in \bar{V}_1,\]
    where $r^*=\sup_{n\geq 1}r_n$ and $c^2=\sup_{n\geq 1}\{\frac{4}{{r_n^2}(1-r_n)}R_0^*(\mcD_n)\}$, and so $\{f_n\}_{n\geq 1}$ is also equicontinuous. Thus, there is a subsequence $\{f_{n_k}\}_{k\geq 1}$ such that $f_{n_k}$ converges uniformly to a function $f\in C(\bar{V}_1)$. Clearly, $f$ is an extension of $u$. By Fatou's lemma,  
    \[\mcR(r,\mcD)(u)\leq\Psi_r\mcD(f)\leq \liminf_{k\to\infty}\Psi_{r_{n_k}}\mcD_{n_k}(f_{n_k})=\liminf_{k\to\infty}\mcR(r_{n_k},\mcD_{n_k})(u).\]
    Combining this with (\ref{eqn32}), we  see that 
    \[\mcR(r,\mcD)(u)=\lim_{k\to\infty}\mcR(r_{n_k},\mcD_{n_k})(u).\]
    Since the argument works for any sequence $(r'_n,\mcD'_n)\to (r,\mcD)$, we actually have 
    \[\mcR(r,\mcD)(u)=\lim_{n\to\infty}\mcR(r_n,\mcD_n)(u).\]
    The theorem follows immediately since $u$ can be any function in $l(V_0)$.  
\end{proof}

\section{A fixed point problem}
In this section, analogous to the case of p.c.f. self-similar sets (see\cite{ki3,Sabot}), we consider the renormalization equation
\begin{equation}\label{eqn41}
\mcR_r\mcD=\lambda\mcD,
\end{equation}
with $\lambda>0$. We will prove that for each given $\rho<r<1$, there always exists a positive $\lambda$ such that (\ref{eqn41}) has a solution $\mcD$ in $\mathcal M$. Nevertheless, this is not enough for the construction of a satisfying resistance form on $\G$ for later purpose. In order that cells of same size will be assigned with same renormalization factors, we will in addition require $\lambda=r^2$, i.e.
\begin{equation}\label{eqn42}
\mcR_r\mcD=r^2\mcD.
\end{equation}
The existence and uniqueness of such solution is the main purpose of this section. 

It is natural to look at the symmetric resistance forms on $V_0$, which means that $a_{0,1}=a_{0,2}$ in (\ref{eqn21}). We denote $\mcM_S$ for the set of all such resistance forms. 

\begin{theorem}\label{thm40}
(a). For each $\rho<r<1$, there exists a unique pair of $\lambda(r)$ and $\mcD(r)\in\mcM$ (up to constants) satisfying  (\ref{eqn41}), where $\lambda(r)$ is decreasing and continuous in $r$, and $\mcD(r)$ is in $\mcM_S$. \\ (b). There exists a unique $\rho<r<1$ such that (\ref{eqn42}) has a unique (up to constants) solution $\mcD\in \mcM$. 
\end{theorem}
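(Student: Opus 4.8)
The plan is to read the renormalization equation \eqref{eqn41} as a nonlinear Perron--Frobenius eigenproblem for $\mcR_r$ on the open cone $\mcM\subset\mathbb{R}^3_{>0}$, in the same spirit as the theory for p.c.f. self-similar sets (\cite{ki3,Sabot}). First I would record the three structural properties that drive everything. The map $\mcR_r=[\Psi_r\,\cdot\,]_{V_0}$ is homogeneous of degree one, since $\Psi_r(c\mcD)=c\Psi_r\mcD$ and the trace is homogeneous; it is monotone for the cone order, because $\mcD\le\mcD'$ entrywise forces $\Psi_r\mcD\le\Psi_r\mcD'$ and the trace $[\cdot]_{V_0}$ preserves order as an infimum; and for each fixed $u\in l(V_0)$ the value $\mcR_r\mcD(u)=\inf\{\Psi_r\mcD(f):f|_{V_0}=u\}$ is concave in $\mcD$, being an infimum of functionals linear in the conductances of $\mcD$. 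Finally, since $W_1$ and $\{F_0,F_1,F_2\}$ are invariant under the reflection $\sigma$ that exchanges $q_1$ and $q_2$, the map $\mcR_r$ commutes with $\sigma$ and therefore preserves $\mcM_S$.

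For the existence in (a) I would first establish $\mcD$-uniform two-sided bounds on the effective resistances of the traced form $\mcR_r\mcD$ between the vertices of $V_0$, valid on a normalized slice of $\mcM$: the upper bounds come directly from the resistance estimate \eqref{eqn22} of Proposition \ref{prop23}, while the lower bounds come from Nash--Williams cut estimates applied to a fixed finite cut-set of $(V_1,\sim)$ separating a chosen boundary vertex. These bounds should carve out a compact convex set $\mathcal K$ of nondegenerate normalized forms that is mapped into itself, so that Brouwer's theorem (using the continuity of $\mcR(r,\mcD)$ from Theorem \ref{thm32}) produces an eigenray $\mcD(r)$ with a positive eigenvalue $\lambda(r)$; crucially the bounds keep this ray off $\partial\mcM$, i.e. $\mcD(r)$ is a genuine resistance form. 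For the uniqueness I would pass to Hilbert's projective metric $d_H$, on which a monotone degree-one map is automatically nonexpansive, and promote this to a strict contraction on the compact region $\mathcal K$ (finite $d_H$-diameter together with the strict concavity and irreducibility of the trace), whence the eigenray is unique in $\mcM$ up to constants. The symmetry claim $\mcD(r)\in\mcM_S$ is then free: $\sigma\mcD(r)$ is again an eigenray for the same $\lambda(r)$, so by uniqueness it is proportional to $\mcD(r)$, and since $\sigma$ is a normalization-preserving involution we get $\sigma\mcD(r)=\mcD(r)$.

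With existence and uniqueness in hand, I would read off the monotonicity and continuity of $\lambda(r)$. For fixed $\mcD$ the weight $r^{-|w|+1}$ is nonincreasing in $r$ and strictly decreasing on words of length at least $2$, so $\Psi_r\mcD$, and hence $\mcR_r\mcD$, is strictly decreasing in $r$; combining this with monotonicity of $\mcR_r$ and the Collatz--Wielandt characterization $\lambda(r)=\sup\{s>0:\mcR_r\mcD\ge s\mcD\text{ for some }\mcD\in\mcM\}$ shows that $\lambda(r)$ is strictly decreasing. Continuity of $\lambda(r)$ follows from the joint continuity of $\mcR(r,\mcD)$ in Theorem \ref{thm32} together with the continuous dependence of the contraction's unique fixed point on $r$.

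For part (b) I would set $h(r)=\lambda(r)-r^2$ on $(\rho,1)$: as $\lambda$ is continuous and strictly decreasing while $r^2$ is continuous and strictly increasing, $h$ is strictly decreasing and so has at most one zero, which gives the uniqueness of $r$ and, via part (a), the uniqueness of $\mcD$ up to constants. Existence then reduces to the endpoint signs, and I would argue $h(r)<0$ as $r\to1^-$ and $h(r)>0$ as $r\to\rho^+$; the latter is exactly the degeneration of $\Psi_r\mcD$ recorded in the Remark after Proposition \ref{prop23}, where the resistance across $\bar V_1$ blows up as $r\downarrow\rho$, forcing $\lambda(r)$ to exceed $r^2$ near $\rho$. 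The intermediate value theorem then yields the unique $r^\ast\in(\rho,1)$ with $\lambda(r^\ast)=(r^\ast)^2$, and $(r^\ast,\mcD(r^\ast))$ solves \eqref{eqn42}. The two quantitative inputs I expect to be the main obstacles are the uniform two-sided resistance bounds underpinning both the invariance of $\mathcal K$ and the strict $d_H$-contraction --- this is where the connectivity of $(V_1,\sim)$ must be used carefully to prevent the eigenray from drifting to a degenerate boundary form --- and the precise endpoint estimates for $\lambda(r)$ in part (b).
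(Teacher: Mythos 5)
Your overall skeleton --- compactness of the normalized image plus a fixed-point theorem for existence, a sup-ratio/Collatz--Wielandt argument for the monotonicity of $\lambda(r)$, and an intermediate-value argument for part (b) --- matches the paper's (Lemma \ref{lemma43}, Proposition \ref{prop44}, Theorem \ref{thm46}). But two of your key steps have genuine gaps. The first is the uniqueness of the eigenform in $\mcM$. Monotonicity and degree-one homogeneity of $\mcR_r$ do make it nonexpansive in Hilbert's projective metric, but the promotion to a \emph{strict} contraction does not follow from concavity, irreducibility, and finite projective diameter of the image: the diameter-to-contraction theorem of Birkhoff is a statement about \emph{linear} positive operators, and it fails for nonlinear monotone homogeneous maps. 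Indeed, trace/renormalization maps on other finitely ramified fractals enjoy every structural property you list and nevertheless admit families of non-proportional eigenforms --- this is precisely the non-uniqueness phenomenon studied in \cite{M1,Sabot} --- so no argument using only those soft properties can yield uniqueness. This also takes down your ``symmetry for free'' step, since you derive it from uniqueness. The paper goes the other way: existence is established \emph{inside} $\mcM_S$ (Proposition \ref{prop44}), and uniqueness in all of $\mcM$ (Theorem \ref{thm47}) is a Sabot-type argument: diagonalize a putative second solution against the symmetric one, use Lemma \ref{lemma48} (positivity of the harmonic-extension matrix, hence $|\eta|<\lambda(r)$) to pin the invariant subspaces to $E_s$ and $E_a$, conclude that every form in $\mcM_S$ would then be an eigenform, and contradict Lemma \ref{lemma43}.

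The second gap is your endpoint analysis at $r\downarrow\rho$ in part (b), which is both inverted and quantitatively insufficient. As $r\downarrow\rho$ the conductances $r^{-|w|+1}$ blow up near the bottom line, and the content of the Remark after Proposition \ref{prop23} is that for $r\le\rho$ every finite-energy continuous function takes the \emph{same} value at $q_1$ and $q_2$: the bottom gets shorted, so the resistance across $\bar V_1$ tends to $0$, not to $\infty$ (for all $r>\rho$ it is bounded above by $2/(a(1-\rho))$ via the two legs). More importantly, the needed inequality $\lambda(r)>r^2$ near $\rho$ is delicate: the easy series estimate along the legs gives only $\lambda(r)\ge 1-r$, and since $1-\rho=\rho^2$ this exactly \emph{ties} $r^2$ at $r=\rho$ and loses for every $r>\rho$, so a strictly improved bound is required. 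The paper's Lemma \ref{lemma45} produces it by an explicit finite-network computation, $\lambda(r)\ge\big(\frac{1}{1-r}-\frac{r}{2+2r+2r^2}\big)^{-1}$, where precisely the extra negative term wins against $r^2$ at $\rho$. Relatedly, your Nash--Williams step cannot supply the lower resistance bounds needed for your compact set $\mathcal K$: no finite edge set separates $q_1$ from $q_2$ in $(V_1,\sim)$, and any separating cut must cross the bottom, where the conductance sum $\sum_n r^{-n+1}$ diverges; the paper's Lemma \ref{lemma43} instead lower-bounds $R(q_1,q_2)$ with an explicit test function that is constant on the pairs identified by the relation $\sim_h$.
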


We will first prove that for each $r$, there exist a unique $\lambda(r)$ such that (\ref{eqn41}) has a solution $\mcD(r)$ in $\mcM_S$, then prove that 
$\mcD(r)$ is indeed the unique solution (up to constants) in $\mcM$. The existence and uniqueness of a solution to (\ref{eqn42}) will follow from the properties of $\lambda(r)$. We divide these into two subsections. 

\subsection{The existence of a symmetric solution}

We begin with some simple observations.

\begin{lemma}\label{lemma41}
	Let $\rho<r<1$ be fixed, and suppose that there is a solution to (\ref{eqn41}). Then the constant $\lambda$ depends only on $r$.
\end{lemma}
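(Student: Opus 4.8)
The plan is to show that the eigenvalue $\lambda$ in the renormalization equation $\mcR_r\mcD=\lambda\mcD$ is forced by $r$ alone, independent of which solution $\mcD$ is chosen. The natural quantity to track is the total resistance across the bottom of the gasket. I would exploit the fact that $\mcR_r\mcD=[\Psi_r\mcD]_{V_0}$ is a resistance form on the three-point set $V_0$, and compare the resistance metric $R_{\mcR_r\mcD}$ associated with $\mcR_r\mcD$ to the resistance metric $R_0$ associated with $\mcD$ itself. Since $\mcR_r\mcD=\lambda\mcD$, scaling a resistance form by $\lambda$ scales its resistance metric by $1/\lambda$, so $R_{\mcR_r\mcD}(p,q)=\lambda^{-1}R_0(p,q)$ for all $p,q\in V_0$. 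Therefore it suffices to show that the effective resistances of the traced form $[\Psi_r\mcD]_{V_0}$ are determined by $r$ and $\mcD$ in a way that pins down $\lambda$.

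The key step is a self-similarity or consistency argument. First I would write out the trace $[\Psi_r\mcD]_{V_0}$ explicitly and observe that, because $\Psi_r\mcD$ is built self-similarly from copies of $\mcD$ across the cells $\{F_w\}_{w\in W_1}$, the effective resistance between two points of $V_0$ through $\bar V_1$ can be recursively related to effective resistances of $\mcD$ on $V_0$. The cleanest route is probably to suppose, for contradiction, that two solutions $(\lambda_1,\mcD_1)$ and $(\lambda_2,\mcD_2)$ exist with $\lambda_1\neq\lambda_2$, and derive a contradiction by comparing their resistance metrics. Using the monotonicity of the trace operation (tracing only decreases energy / increases resistance in a controlled way) together with the fixed-point relation, one gets that the ratio $R_0(p,q)/R_0(p',q')$ of resistances between different pairs is constrained, and the scaling factor $\lambda$ is read off from the requirement that tracing the self-similar form reproduces $\mcD$ up to the factor $\lambda$. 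Since the geometry of $V_0$ (three points) and the combinatorics of $W_1$ do not depend on $\mcD$, the eigenvalue must be common.

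I expect the main obstacle to be making the comparison between $R_{\mcR_r\mcD}$ and $R_0$ quantitative enough to exclude two distinct eigenvalues, rather than merely establishing existence. One must rule out the possibility that different solutions $\mcD$ could sit at different ``scales'' relative to $r$; the delicate point is that $\lambda$ is a global invariant of the map $\mcD\mapsto\mcR_r\mcD$ and not of the particular form. A convenient way to finish is to invoke a normalization: since the statement allows $\mcD$ only up to a positive multiplicative constant, and $\mcR_r$ is homogeneous of degree one in $\mcD$ (directly from Definition \ref{def22} and \ref{def31}, as $\Psi_r(c\mcD)=c\Psi_r\mcD$ and hence $[\Psi_r(c\mcD)]_{V_0}=c[\Psi_r\mcD]_{V_0}$), any solution $\mcD$ gives the same $\lambda$ as its rescalings. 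The remaining work is to show that two genuinely different normalized solutions still share $\lambda$, which I would handle by the resistance-metric comparison above, using that on the three-point space $V_0$ a resistance form is determined by the three pairwise resistances and that the fixed-point equation forces these ratios, and therefore $\lambda$, to match.
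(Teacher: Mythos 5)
Your proposal correctly handles the easy half of the lemma (homogeneity: $\mcR_r(c\mcD)=c\,\mcR_r\mcD$, so proportional solutions share the same $\lambda$), but the crucial case --- two \emph{non-proportional} solutions $\mcD$, $\mcD'$ with eigenvalues $\lambda$, $\lambda'$ --- is never actually argued. The step where you claim that ``the fixed-point equation forces these ratios, and therefore $\lambda$, to match'' is an assertion, not a proof: $\mcR_r$ is a nonlinear map on the space of forms, and nothing in your outline rules out two distinct fixed rays carrying distinct eigenvalues. Moreover, the statement you are steering toward (that the resistance ratios of two solutions agree, i.e.\ the solutions coincide up to a constant) is strictly \emph{stronger} than the lemma; it is precisely the uniqueness result (Theorem \ref{thm47}), which the paper proves later by a much more delicate argument following Sabot. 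You neither need that for this lemma, nor can you obtain it from ``monotonicity of the trace'' alone. Your reformulation via resistance metrics ($R_{\mcR_r\mcD}=\lambda^{-1}R_0$) is correct but is just a restatement of the eigenvalue equation; it does not substitute for a comparison between the two solutions.

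The missing idea is the sup-ratio (Hilbert projective metric) comparison, as in Metz's work on finite graphs, and it is exactly what the paper uses. Since $V_0$ has three points, $l(V_0)$ modulo constants is finite-dimensional and both forms are positive definite there, so $\theta:=\sup_{v\ \mathrm{nonconstant}}\mcD'(v)/\mcD(v)$ is finite and attained at some $u$. Let $f$ be the harmonic extension of $u$ with respect to $\Psi_r\mcD$. Then
\[
\lambda'\mcD'(u)=\mcR_r\mcD'(u)\leq\Psi_r\mcD'(f)\leq\theta\,\Psi_r\mcD(f)=\theta\,\mcR_r\mcD(u)=\theta\lambda\mcD(u)=\lambda\mcD'(u),
\]
where the first inequality holds because $f$ is one admissible competitor in the infimum defining the trace of $\Psi_r\mcD'$, and the second holds \emph{termwise}, since $\mcD'(f\circ F_w)\leq\theta\,\mcD(f\circ F_w)$ for every $w\in W_1$ --- this is the point where monotonicity and homogeneity must be applied to $\Psi_r$ itself, not merely to the trace operation. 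Hence $\lambda'\leq\lambda$, and exchanging the roles of $\mcD$ and $\mcD'$ gives $\lambda\leq\lambda'$. This closes the gap your outline leaves open, and it does so without ever needing the solutions themselves to agree.
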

\begin{proof}
This follows from a standard argument like the finite graph case \cite{M1}. Suppose that $\mcD,\mcD'$ are two solutions to (\ref{eqn41}) with $\lambda,\lambda'$ being the corresponding constant. Let $u\in l(V_0)$ so that $\frac{\mcD'(u)}{\mcD(u)}=\sup_{v\neq constants}\frac{\mcD'(v)}{\mcD(v)}:=\theta$, and let $f$ be the harmonic extension of $u$ with respect to $\Psi_r\mcD$. Then
\[\lambda'\mcD'(u)=\mcR_r\mcD'(u)\leq \Psi_r\mcD'(f)\leq \theta\Psi_r\mcD(f)=\theta\mcR_r\mcD(u)=\theta \lambda\mcD(u).\]
This implies that $\lambda'\leq \lambda$. A same argument also shows that $\lambda\leq \lambda'$.
\end{proof}

Inspired by Lemma \ref{lemma41}, we can view the constant $\lambda$ in (\ref{eqn41}) as a function of $r$. On the other hand, the problem of solvability of (\ref{eqn41}) can be transferred to  a fixed point problem.

\begin{definition}
(a). Define 
\[\tilde{\mcM}_S=\big\{\mcD\in \mcM:\mcD(f)=a\big(f(q_0)-f(q_1)\big)^2+a\big(f(q_0)-f(q_2)\big)^2+(1-a)\big(f(q_1)-f(q_2)\big)^2, 0<a\leq 1\big\},\]
and for $0<s\leq 1$,
\[\tilde{\mcM}_S^{[s,1]}=\big\{\mcD\in \mcM:\mcD(f)=a\big(f(q_0)-f(q_1)\big)^2+a\big(f(q_0)-f(q_2)\big)^2+(1-a)\big(f(q_1)-f(q_2)\big)^2, s\leq a\leq 1\big\}.\]

(b). For each $\mcD\in \mcM_S$, there is a unique constant $c$ such that $c\mcD\in \tilde{\mcM}_S$, and we denote the resulted form $T\mcD$. We define $\tilde{\mcR}_r:\mcM_S\to \tilde{\mcM}_S$ as $\tilde{\mcR}_r=T\circ\mcR_r$. As before, we write $\tilde{R}(r,\mcD)=\tilde{R}_r(\mcD)$.
\end{definition}

The following lemma will play an essential role.
\begin{lemma}\label{lemma43}
For $\rho<r_0<r_1<1$, there exists $0<s\leq 1$ such that $\tilde\mcR:[r_0,r_1]\times \mcM_S\to\tilde{\mcM}_S^{[s,1]}$.
\end{lemma}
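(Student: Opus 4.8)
The goal of Lemma \ref{lemma43} is to show that the normalized renormalization map $\tilde{\mcR}_r$, when restricted to symmetric forms and to a compact range of parameters $r\in[r_0,r_1]$, lands in a set $\tilde{\mcM}_S^{[s,1]}$ bounded away from the degenerate boundary $a=0$. The point is that $\tilde{\mcM}_S^{[s,1]}$ is a \emph{compact} subset of $\mcM$, which is exactly what one needs to run a fixed point argument (e.g. Brouwer or Schauder) for the equation $\mcR_r\mcD=\lambda\mcD$. So what must be ruled out is that the renormalized form could degenerate, i.e. that the coefficient $a$ (the weight on the two ``top'' edges $q_0q_1,q_0q_2$ relative to the ``bottom'' edge $q_1q_2$) could be forced arbitrarily close to $0$.

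The plan is to argue by contradiction using the joint continuity of $\mcR(r,\mcD)$ established in Theorem \ref{thm32}. First I would observe that the parametrization of $\tilde{\mcM}_S$ by $a\in(0,1]$ identifies it with a half-open interval; degeneration means $a\to 0^+$, which corresponds (after normalization $T$) to the limiting form $\mcD_0(f)=(f(q_1)-f(q_2))^2$ placing all conductance on the bottom edge and none connecting $q_0$ to the rest. I want to show $\tilde{\mcR}(r,\mcD)$ stays away from this degenerate point uniformly over $(r,\mcD)\in [r_0,r_1]\times\mcM_S$. Since $\tilde{\mcR}_r=T\circ\mcR_r$ depends only on the projective class of $\mcD$, and since $\tilde{\mcM}_S$ (the unit-normalized symmetric forms) is parametrized by the compact interval $a\in[0,1]$ after adding the boundary point, the natural move is to extend $\tilde{\mcR}$ continuously to the boundary and check its behavior there. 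Concretely, I would take a sequence $(r_n,\mcD_n)$ with $r_n\in[r_0,r_1]$ and $\tilde{\mcR}(r_n,\mcD_n)$ having its coefficient $a_n\to 0$; pass to convergent subsequences $r_n\to r_\infty\in[r_0,r_1]$ and $T\mcD_n\to \mcD_\infty\in\tilde{\mcM}_S$ (using compactness of the parameter interval), and then aim to derive a contradiction from what $\mcR_r$ does to the limit.

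The key computation is to understand $\tilde{\mcR}_r\mcD$ when $\mcD$ itself is close to the degenerate form $\mcD_0$ concentrated on the bottom edge, and to show that the renormalized form is \emph{not} degenerate — indeed that its top coefficient is bounded below. The mechanism is geometric: even if $\mcD$ assigns tiny conductance from $q_0$ to $q_1,q_2$, the self-similar structure $\Psi_r\mcD(f)=\sum_{w\in W_1}r^{-|w|+1}\mcD(f\circ F_w)$ sums over infinitely many cells $F_wV_0$, and these cells provide many parallel and series connections between $q_0$ and the bottom line $q_1q_2$ of $\bar V_1$. One should verify that the effective resistance from $q_0$ to the bottom edge in $\Psi_r\mcD$ stays finite and bounded above uniformly (so that after tracing to $V_0$ the top coefficient $a$ of $\mcR_r\mcD$ is bounded below), using that $\rho<r_0\le r\le r_1<1$ guarantees the series $\sum \#W_{1,n}r^{-n+1}$ behaves well, exactly as in the resistance estimates of Proposition \ref{prop23}. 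In other words, the upper bound $R_0^*$-type estimate together with the connectivity of the graph $(V_1,\sim)$ forces a definite minimal conductance across the top.

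I expect the main obstacle to be the lower bound on the top coefficient, i.e. showing $a$ cannot collapse to $0$. The subtlety is that the input form $\mcD\in\mcM_S$ ranges over \emph{all} symmetric forms, including those already near-degenerate, so one cannot simply quote a fixed resistance bound; the estimate must be uniform in $\mcD$ after projective normalization, and must exploit that $\mcR_r$ genuinely mixes the edges through the infinite cell structure rather than preserving degeneracy. The cleanest route is probably the contradiction/compactness argument sketched above: if the conclusion failed there would be a limiting input form and limiting $r_\infty\in[r_0,r_1]$ whose image under the continuous extension of $\tilde{\mcR}$ is degenerate, and one shows directly that $\mcR_{r_\infty}$ applied to any nonzero symmetric form — even the boundary form $\mcD_0$ — produces a nondegenerate form (since tracing an irreducible connected network back to $V_0$ yields an irreducible form on $V_0$, with strictly positive top conductance because $q_0$ is connected to $q_1,q_2$ through the graph). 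Continuity (Theorem \ref{thm32}), compactness of $[r_0,r_1]$ and of the normalized parameter interval, and the nondegeneracy of traces of connected networks are the three ingredients I would assemble.
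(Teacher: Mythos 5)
There is a genuine gap, and it sits exactly at the point the lemma exists to handle. Your compactness/contradiction scheme yields a contradiction via Theorem \ref{thm32} only when the subsequential limit of the normalized input forms has top coefficient $a_\infty>0$, i.e.\ lies in $\mcM$. In the critical remaining case the limit is the boundary form $\mcD_0(f)=\big(f(q_1)-f(q_2)\big)^2$, which is \emph{not} irreducible, hence not in $\mcM$, so Theorem \ref{thm32} says nothing there; you would need to extend $\tilde{\mcR}$ continuously to this boundary point and show the extension is nondegenerate. But your claim that the extension is nondegenerate (``tracing an irreducible connected network back to $V_0$ yields an irreducible form \ldots because $q_0$ is connected to $q_1,q_2$ through the graph'') is false at the boundary: in the network $\Psi_r\mcD_0$ the only edges are the bottom pairs $\{F_wq_1,F_wq_2\}$, $w\in W_1$, and $q_0$ is not a bottom vertex of any cell (the only cell of $W_1$ containing $q_0$ is $F_0\G$, with $q_0=F_0q_0$ its \emph{top} vertex), so $q_0$ is an isolated vertex and the trace of $\Psi_r\mcD_0$ assigns zero conductance at $q_0$. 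The limiting network is disconnected, degeneracy is preserved by the trace of the limit, and the statement you need (that the \emph{normalized} traces $\tilde{\mcR}(r_n,\mcD_n)$ stay away from the degenerate point even as $\mcD_n\to\mcD_0$) is essentially the lemma itself: the compactness argument begs the question precisely at the boundary.

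Your supporting quantitative claim is also wrong, and seeing why reveals the correct mechanism. The effective resistance from $q_0$ to the bottom does \emph{not} stay bounded as $a\to 0$: every edge incident to $q_0$ has conductance proportional to $a$, so $R(q_0,q_1)\geq c/a\to\infty$, and consequently the un-normalized trace $\mcR_r\mcD$ degenerates entirely (all coefficients tend to $0$). What saves the normalized form is a cancellation of blow-up rates, which must be proved by a uniform two-sided estimate — this is what the paper does. It shows (i) $R(q_0,q_1)\leq \frac{1}{a(1-r)}$, using only the chain of $a$-edges $\{F_1^nq_0,F_1^{n+1}q_0\}$ down the left side; and (ii) $R(q_1,q_2)\geq \frac{r_0-\rho}{c\rho}\,a^{-1}$, exhibited by the test function $g$ obtained from the linear function of the $x$-coordinate by averaging over the bottom-pair identification $\sim_h$: this $g$ is constant on every pair $\{F_wq_1,F_wq_2\}$, so its energy sees only the $a$-edges and is at most $\frac{c\rho}{r-\rho}\,a$ by $\#W_{1,l}\asymp\rho^{-l}$ and $r>\rho$. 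Both resistances blow up like $1/a$, their ratio is bounded by a constant depending only on $r_0,r_1$, and since the trace preserves effective resistances on $V_0$, this uniform ratio bound is exactly the uniform lower bound $s$ on the normalized top coefficient. No compactness or limiting argument can replace this estimate, because the phenomenon being ruled out is invisible at the level of the limiting network.
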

\begin{proof}
Let $\mcD\in \mcM_S$, $r_0\leq r\leq r_1$ and $R$ be the resistance metric on $V_1$ associated to $\Psi_r\mcD$. For convenience, we write $\mcD(f)=a\big(f(q_0)-f(q_1)\big)^2+a\big(f(q_0)-f(q_2)\big)^2+b\big(f(q_1)-f(q_2)\big)^2$, with $a>0,b\geq 0$.   

First, an immediate observation shows that for any $f\in l(V_1)$,
\[\Psi_{r}\mcD(f)\geq \sum_{n=0}^\infty a{r}^{-n}\big(f(F_1^nq_0)-f(F_1^{n+1}q_0)\big)^2\geq a(1-r)\big(f(q_0)-f(q_1)\big)^2,\]
so we have $R(q_0,q_1)\leq\frac{1}{a(1-r)}\leq \frac{1}{a(1-r_1)}$.

Next, let $f$ be the linear function on $\mathbb{R}^2$ such that $f(q_1)=0,f(q_2)=1$ and $f(q_0)=\frac{1}{2}$, so $f$ only depends on the $x$-coordinate.  We introduce an equivalence relation `$\sim_h$' on $V_1$,
\[p\sim_h q\text{ if there exists }w\in W_1\text{ so that }p,q\in \{F_wq_1,F_wq_2\}.\]
Then we modify $f$ on $V_1$ into a function $g\in l(V_1)$ as
\[g(p)=\frac{\sum_{q\sim_h p}f(q)}{\sum_{q\sim_h p}1},\quad\forall p\in V_1.\]
By doing this we have 

\noindent1. $g(p)=g(q)$ if $p\sim_h q$;

\noindent2. $|g(p)-g(q)|\leq c_1\rho^{n}$ if $p,q\in F_wV_0$ with $w\in W_{1,n}$.  

Thus we have 
\[
\begin{aligned}
\Psi_r\mcD(g)&=\sum_{l=1}^\infty\sum_{w\in W_{1,l}}r^{-l+1}\mcD(g\circ F_w)\\
&\leq 2c_1^2 a\sum_{l=1}^\infty r^{-l+1}\rho^{2l}\#W_{1,l}\\
&\leq c_2a\sum_{l=1}^\infty r^{-l}\rho^{l}=\frac{c_2\rho}{r-\rho}a\leq \frac{c_2\rho}{r_0-\rho}a,
\end{aligned}
\]
where we use the estimate $\#W_{1,l}\asymp \rho^{-l}$. Thus, $g$ extends to $g\in C(\bar{V}_1)$ by Proposition \ref{prop23}, and it is direct to check that $g|_{V_0}=f|_{V_0}$. As a consequence, we get $R(q_1,q_2)\geq\frac{r_0-\rho}{c_2\rho}a^{-1}$. 

Due to the above two estimates, there exists $c_3>0$ independent of $\mcD$ such that 
\[\frac{R(q_0,q_1)}{R(q_1,q_2)}\leq c_3.\]
The lemma follows immediately.
\end{proof}

By using Lemma \ref{lemma41}, Lemma \ref{lemma43} and Theorem \ref{thm32}, we can easily prove the following proposition.
\begin{proposition}\label{prop44}
	Let $\rho<r<1$, there always exists a solution to (\ref{eqn41}) in $\mcM_S$, with $\lambda$ uniquely determined by $r$. In addition, regarding $\lambda$ as a function of $r$,  $\lambda(r)$ is decreasing and continuous in $r$. 
\end{proposition}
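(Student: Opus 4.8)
The plan is to read off the existence of a symmetric solution from a fixed point of the normalized map $\tilde{\mcR}_r$, and then to extract the monotonicity and continuity of $\lambda$ from the comparison argument of Lemma~\ref{lemma41}, the range control of Lemma~\ref{lemma43}, and the joint continuity of Theorem~\ref{thm32}.

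For existence I would first note that $\tilde{\mcM}_S$ is just the one-parameter family $\mcD_a(f)=a\big((f(q_0)-f(q_1))^2+(f(q_0)-f(q_2))^2\big)+(1-a)(f(q_1)-f(q_2))^2$ with $a\in(0,1]$, and that normalizing any $\mcD\in\mcM_S$ with coefficients $\alpha,\beta$ via $T$ amounts to setting $a=\alpha/(\alpha+\beta)$; thus $\tilde{\mcM}_S$ is homeomorphic to the interval $(0,1]$ and $\tilde{\mcM}_S^{[s,1]}$ to $[s,1]$. Since the gasket carries the left--right reflection fixing $q_0$ and swapping $q_1,q_2$, the form $\Psi_r\mcD$ inherits this symmetry when $\mcD\in\mcM_S$, so $\mcR_r$ preserves $\mcM_S$ and $\tilde{\mcR}_r$ is a genuine self-map. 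Fixing $r$ in a compact subinterval $[r_0,r_1]\subset(\rho,1)$, Lemma~\ref{lemma43} sends $\tilde{\mcM}_S^{[s,1]}$ into itself and Theorem~\ref{thm32} (with continuity of $T$) makes $\tilde{\mcR}_r$ continuous, so $\tilde{\mcR}_r$ is a continuous self-map of the compact interval $[s,1]$; the intermediate value theorem (equivalently Brouwer) yields a fixed point $\mcD(r)$. Unwinding $\tilde{\mcR}_r\mcD(r)=\mcD(r)$ gives $T(\mcR_r\mcD(r))=\mcD(r)$, i.e. $\mcR_r\mcD(r)=\lambda(r)\mcD(r)$, a solution of (\ref{eqn41}); that $\lambda$ depends on $r$ alone is Lemma~\ref{lemma41}.

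For monotonicity the key point is that $\mcR_r\mcD$ is itself nonincreasing in $r$: since every $w\in W_1$ has $|w|\geq 1$, the exponents $-|w|+1$ are nonpositive, so $r\mapsto r^{-|w|+1}$ is nonincreasing and $\Psi_{r_1}\mcD(f)\geq\Psi_{r_2}\mcD(f)$ for all $f$ when $r_1<r_2$; combined with the nesting of domains from Proposition~\ref{prop33}(a), taking infima gives $\mcR_{r_1}\mcD\geq\mcR_{r_2}\mcD$ on $V_0$. I would then transport the Perron--Frobenius comparison of Lemma~\ref{lemma41} across different $r$: with $\mcD_i=\mcD(r_i)$, $\theta=\sup_{v}\mcD_2(v)/\mcD_1(v)$ attained at a nonconstant $u$, and $f$ the $\Psi_{r_1}\mcD_1$-harmonic extension of $u$, one estimates
\[
\lambda_2\mcD_2(u)=\mcR_{r_2}\mcD_2(u)\leq\mcR_{r_1}\mcD_2(u)\leq\Psi_{r_1}\mcD_2(f)\leq\theta\,\Psi_{r_1}\mcD_1(f)=\theta\lambda_1\mcD_1(u)=\lambda_1\mcD_2(u),
\]
so that $\lambda(r_2)\leq\lambda(r_1)$. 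For continuity I would argue by subsequences: given $r_n\to r$ inside some $[r_0,r_1]$, the fixed points $\mcD(r_n)$ lie in the common compact set $\tilde{\mcM}_S^{[s,1]}$ by Lemma~\ref{lemma43}, so $\mcD(r_{n_k})\to\mcD_\infty$ along a subsequence; joint continuity forces $\tilde{\mcR}_r\mcD_\infty=\mcD_\infty$, hence $\mcD_\infty$ solves (\ref{eqn41}) at $r$, and applying $\mcR$ with Theorem~\ref{thm32} gives $\mcR_r\mcD_\infty=\lim\lambda(r_{n_k})\mcD(r_{n_k})=\lambda(r)\mcD_\infty$ (the last equality by Lemma~\ref{lemma41}), whence $\lambda(r_{n_k})\to\lambda(r)$. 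As every subsequence admits a further subsequence with this limit, $\lambda(r_n)\to\lambda(r)$.

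The main obstacle is the monotonicity step. Existence is immediate once Lemma~\ref{lemma43} supplies a compact invariant interval, and continuity is a routine compactness-plus-uniqueness argument; but monotonicity cannot use $\mcR_r$'s $r$-monotonicity alone, since the optimizing forms $\mcD(r_1),\mcD(r_2)$ are genuinely distinct. One must push the comparison through a harmonic extension exactly as in Lemma~\ref{lemma41}, and it is precisely the sign of the exponents $-|w|+1$ — equivalently that $W_1$ contains no empty word — that makes the $r$-monotonicity of $\Psi_r$ point in the direction needed to close the chain of inequalities.
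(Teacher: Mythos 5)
Your proposal is correct and follows essentially the same route as the paper: existence via Lemma \ref{lemma43} plus a fixed-point argument on the compact set $\tilde{\mcM}_S^{[s,1]}$, monotonicity via the Lemma \ref{lemma41}-style comparison through a $\Psi_{r_1}\mcD_1$-harmonic extension (resting on the nonpositivity of the exponents $-|w|+1$), and continuity via compactness of $\tilde{\mcM}_S^{[s,1]}$, Theorem \ref{thm32}, and the uniqueness of $\lambda$. Your splitting of the monotonicity chain into $\mcR_{r_2}\mcD_2(u)\leq\mcR_{r_1}\mcD_2(u)\leq\Psi_{r_1}\mcD_2(f)\leq\theta\Psi_{r_1}\mcD_1(f)$ is just a refactoring of the paper's single inequality $\mcR_{r_2}\mcD_2(u)\leq\Psi_{r_2}\mcD_2(f)\leq\theta\Psi_{r_1}\mcD_1(f)$, and your added details (identifying $\tilde{\mcM}_S^{[s,1]}$ with an interval, the reflection symmetry showing $\mcR_r$ preserves $\mcM_S$) are elaborations the paper leaves implicit.
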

\begin{proof}
First, we have $\tilde{\mcR}_r:\tilde{\mcM}_S^{[s,1]}\to \tilde{\mcM}_S^{[s,1]}$ for some $s>0$ by Lemma  \ref{lemma43}. The existence of a fixed point of $\tilde{\mcR}_r$ is then an immediate consequence. 

Next, let $r_1<r_2$, and assume that $\mcR_{r_1}\mcD_1=\lambda(r_1)\mcD_1$ and $\mcR_{r_2}\mcD_2=\lambda(r_2)\mcD_2$.  Let $u\in l(V_0)$ so that $\frac{\mcD_2(u)}{\mcD_1(u)}=\sup_{v\neq constants}\frac{\mcD_2(v)}{\mcD_1(v)}:=\theta$, and let $f$ be the harmonic extension of $u$ with respect to $\Psi_{r_1}\mcD_1$, then we have 
$\lambda(r_2)\mcD_2(u)\leq\Psi_{r_2}\mcD_2(f)\leq \theta\Psi_{r_1}\mcD_1(f)=\theta \lambda(r_1)\mcD_1(u)$. So we get $\lambda(r_2){\leq} \lambda(r_1)$. 

Finally, let $r_n\to r$, and let $\mcD_n\in \tilde{\mcM}_S$ be a sequence of solutions to $\mcR_{r_n}\mcD_n=\lambda(r_n)\mcD_n$. Clearly, we have $\rho<\inf_{n\geq 1}r_n<\sup_{n\geq 1}r_n<1$, so $\{\mcD_n\}_{n\geq 1}\subset \tilde{\mcM}_S^{[s,1]}$ for some $s>0$ by Lemma \ref{lemma43}. Thus, there exists a subsequence $\{n_k\}_{k\geq 1}$ such that $\mcD_{n_k}$ converges to some $\mcD\in \tilde{\mcM}_S$ and $\lambda(r_{n_k})$ converges. By Theorem \ref{thm32}, we conclude that $\mcR_r\mcD=\big(\lim_{k\to\infty}\lambda({r_{n_k}})\big)\mcD$. So $\lambda(r)=\lim_{k\to\infty}\lambda(r_{n_k})$. Since the argument works for any sequence $r_n\rightarrow r$, $\lambda(r)$ is continuous in $r$.
\end{proof}

We have an easy estimate of $\lambda(r)$.

\begin{lemma}\label{lemma45}
	For $\rho<r<1$, we have $\big(\frac{1}{1-r}-\frac{r}{2+2r+2r^2}\big)^{-1}\leq \lambda(r)\leq \frac{2}{2+r}$.
\end{lemma}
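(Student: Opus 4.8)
The plan is to exploit the fixed-point relation $\mcR_r\mcD=\lambda\mcD$ together with the electrical interpretation of the renormalization map, reducing the whole lemma to a single effective-resistance computation in the infinite network $(\bar V_1,\Psi_r\mcD)$. Writing the symmetric solution as $\mcD(f)=a\big(f(q_0)-f(q_1)\big)^2+a\big(f(q_0)-f(q_2)\big)^2+(1-a)\big(f(q_1)-f(q_2)\big)^2$, I first take the symmetric boundary data $u$ with $u(q_0)=1,\ u(q_1)=u(q_2)=0$, for which $\mcD(u)=2a$. By the definition of the trace, $\mcR_r\mcD(u)$ is the Dirichlet energy of the harmonic extension, i.e. the effective conductance from $q_0$ to $\{q_1,q_2\}$ in $\Psi_r\mcD$; combined with $\mcR_r\mcD(u)=\lambda\mcD(u)=2a\lambda$ this gives $R_{\Psi_r\mcD}(q_0,\{q_1,q_2\})=\frac1{2a\lambda}$. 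Since $q_0$ lies in no level-one cell other than $F_0\G$, its only neighbours are $F_0q_1=F_1q_0$ and $F_0q_2=F_2q_0$ (the golden-ratio identity $\rho^2+\rho-1=0$), so $\{F_0q_1,F_0q_2\}$ is a cut set and, the two being equipotential by symmetry, the resistance splits in series as $\frac1{2a\lambda}=\frac1{2a}+R_{\mathrm{rest}}$, where $R_{\mathrm{rest}}$ is the resistance from $\{F_0q_1,F_0q_2\}$ to the base. This yields the key identity $\lambda^{-1}=1+2aR_{\mathrm{rest}}$, and the lemma becomes a two-sided estimate of $R_{\mathrm{rest}}$.

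For the upper bound $\lambda\le\frac2{2+r}$ I would lower-bound $R_{\mathrm{rest}}$ by a single cut. The only edges leaving $\{F_0q_1,F_0q_2\}$ downward are the four apex-to-base edges of the two cells $F_{10}\G$ and $F_{20}\G$; each has weight $r^{-1}$ and conductance $a$, so the cut has total conductance $4ar^{-1}$. Every path from $\{F_0q_1,F_0q_2\}$ to the base crosses it, whence $R_{\mathrm{rest}}\ge\frac{r}{4a}$, so that $\lambda^{-1}\ge 1+\frac r2$, i.e. $\lambda\le\frac2{2+r}$.

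For the lower bound I would upper-bound $R_{\mathrm{rest}}$ by restricting $\Psi_r\mcD$ to an explicit subnetwork (equivalently, exhibiting a unit flow). Keeping only the two outer vertical edge-paths $\{F_1^nq_0\}_n$ and $\{F_2^nq_0\}_n$ down to $q_1,q_2$ already gives, via the geometric series $\sum_{n\ge1}r^n/a$ summed exactly as in Proposition~\ref{prop23} and Lemma~\ref{lemma43}, the crude bound $R_{\mathrm{rest}}\le\frac{r}{2a(1-r)}$, hence $\lambda\ge 1-r$. To reach the sharper stated constant one keeps in addition the two inner downward edges at $\{F_0q_1,F_0q_2\}$, which route current through the central overlap region and supply extra parallel conductance; a series–parallel reduction of this finite skeleton together with the remaining geometric tails produces the correction term, giving $2aR_{\mathrm{rest}}\le \frac r{1-r}-\frac r{2(1+r+r^2)}$ and therefore $\lambda^{-1}\le\frac1{1-r}-\frac{r}{2+2r+2r^2}$, which is exactly the claimed lower bound.

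The routine part throughout is the summation of the geometric series along the self-similar vertical paths, already exemplified in the earlier proofs. The main obstacle is the local combinatorics forced by the overlap: one must verify that precisely the two cells $F_{10}\G,F_{20}\G$ (equivalently, the four conductances $ar^{-1}$) meet $\{F_0q_1,F_0q_2\}$ from below — which is what makes the cut bound tight — and, for the lower bound, track how the inner edges reconnect through $F_1\G\cap F_2\G$ so that the chosen skeleton reduces to a computable finite resistor network. Once this geometry is pinned down, both inequalities are elementary.
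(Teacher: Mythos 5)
Your reduction at the apex is sound, and your proof of the upper bound is complete: $q_0$ lies in no level-one cell except $F_0\G$, so its only neighbours are $F_1q_0,F_2q_0$; these are equipotential by symmetry, the series identity $\lambda^{-1}=1+2aR_{\mathrm{rest}}$ is exact, and the four edges of conductance $ar^{-1}$ joining $\{F_1q_0,F_2q_0\}$ to $F_{11}q_0,F_{12}q_0,F_{21}q_0,F_{22}q_0$ (the only cells meeting $\{F_1q_0,F_2q_0\}$ besides $F_0\G$ are indeed $F_{10}\G$ and $F_{20}\G$) form a genuine cut, giving $R_{\mathrm{rest}}\ge\frac{r}{4a}$ and $\lambda\le\frac{2}{2+r}$. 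This is the paper's own upper-bound computation in network form: the paper's test function ($0$ at $q_0$, $\frac{2}{2+r}$ at $F_1q_0,F_2q_0$, $1$ on $F_1V_1\cup F_2V_1$) is constant on each side of your cut and has energy $\frac{4}{2+r}$, the conductance of your two-resistor series network. The crude bound $\lambda\ge 1-r$ is also correct.

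The sharper lower bound, however, is asserted rather than proved, and this is a genuine gap; it is exactly the part of the lemma that requires work, and the part that matters downstream, since Theorem \ref{thm46} needs $\lambda(r)>r^2$ for $r$ near $\rho$ while $1-\rho=\rho^2$ exactly, so the correction term $\frac{r}{2+2r+2r^2}$ cannot be dispensed with. You never write down the skeleton or carry out the ``series--parallel reduction,'' and the one geometric hint you give points the wrong way: the extra conductance does not come from routing current through $F_1\G\cap F_2\G$. By the same reflection symmetry you invoke, any block of the network that is attached to the rest of your skeleton only through the two inner edges at the equipotential vertices $F_1q_0,F_2q_0$ carries no current at all (the energy minimizer is constant on it), so a skeleton of the shape you describe collapses back to the two bare outer chains and yields nothing beyond $\lambda\ge1-r$; moreover the central cell $F_{122}\G=F_{211}\G$ shares no vertex with the outer chains, so there is no direct ``reconnection'' through it. What actually produces the correction term is a detour \emph{inside each half} rejoining the \emph{same} outer chain, via the rescaled overlap identity $F_iF_jF_i^2=F_i^2F_j^2$ (equivalently $F_{i1}\G\cap F_{i2}\G\neq\emptyset$): for $i\in\{1,2\}$, $j\neq i$, the path
\[
F_iq_0\to F_iF_jq_0\to F_iF_jF_iq_0\to F_iF_jF_i^2q_0=F_i^2F_j^2q_0\to F_i^2F_jq_0\to F_i^2q_0
\]
has resistances $a^{-1}(r,r^2,r^3,r^3,r^2)$; placed in parallel with the single edge $F_iq_0\to F_i^2q_0$ of resistance $a^{-1}r$ and in series with the tail $a^{-1}\sum_{l\ge2}r^l$, it gives per-side resistance $a^{-1}\big(\frac{r}{1-r}-\frac{r}{2(1+r+r^2)}\big)$ from $F_iq_0$ to $q_i$, whence $2aR_{\mathrm{rest}}\le\frac{r}{1-r}-\frac{r}{2(1+r+r^2)}$ and the stated bound. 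This is precisely the subnetwork (the vertices $p_{i,0},\dots,p_{i,5}$) used in the paper's proof; without identifying it, your lower bound does not go through.
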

\begin{proof}
	We consider a function $u\in l(V_0)$ with $u(q_0)=0$ and $u(q_1)=u(q_2)=1$. Without loss of generality,   we assume the solution $\mcD$ has  $\mcD(u)=2$. 
	
	To get the upper bound of $\lambda(r)$, we construct an extension $f\in l(V_1)$ of $u$ as 
	\[f(p)=\begin{cases}
	0,&\text{ if }p=q_0,\\
    \frac{2}{2+r},&\text{ if }p\in \{F_1q_0,F_2q_0\},\\
	1,&\text{ if }p\in F_1 V_1\cup F_2 V_1.
	\end{cases}\]
	Then the upper bound follows easily from the following estimate,
	\[\mathcal{R}_\lambda\mcD(u)\leq \Psi_r\mcD(f)=\frac{4}{2+r}=\frac{2}{2+r}\mcD(u).\] 
	
	To get the lower bound, we look at a subgraph in $(V_1,\sim)$, whose vertices are $\{F_i^lq_0\}_{i,l}$ with $i\in\{1,2\}$ and $l\geq 0$, together with
	\[\begin{aligned}
	p_{i,0}=F_iq_0,\quad p_{i,1}=F_iF_jq_0,\quad p_{i,2}=F_iF_jF_iq_0,\\ p_{i,3}=F_iF_jF_i^2q_0, \quad p_{i,4}=F_i^2F_jq_0,\quad p_{i,5}=F_i^2q_0,
	\end{aligned}\]
	with $i,j\in \{1,2\}$ and $j\neq i$, and edges inherit from $(V_1,\sim)$. Let $f\in l(V_1)$ be the harmonic extension of $u$, denote $c_l=r^{-l-1}$ for $l\in\{0,1,2\}$, and $c_l=r^{l-6}$ for $l\in\{3,4\}$, then the lower bound follows from the estimate that
	\[\begin{aligned}
	\mcR_r\mcD(u)=\Psi_r\mcD(f)\geq &\sum_{i=1,2}\Big(\sum_{l=0}^\infty r^{-l}\big(f(F_i^lq_0)-f(F_i^{l+1}q_0)\big)^2+\sum_{l=0}^4 c_l\big(f(p_{i,l})-f(p_{i,l+1})\big)^2\Big)\\
	&\geq 2\big(\frac{1}{1-r}-\frac{r}{2+2r+2r^2}\big)^{-1}=\big(\frac{1}{1-r}-\frac{r}{2+2r+2r^2}\big)^{-1}\mcD(u),
	\end{aligned}\]
	where the last inequality can be done by an easy computation of the effective resistances on the subgraph.
\end{proof}

Using Proposition \ref{prop44} and Lemma \ref{lemma45}, we arrive the main result of this subsection, concerning the solvability of (\ref{eqn42}). 
\begin{theorem}\label{thm46}
There exists a unique $\rho<r<1$ such that (\ref{eqn42}) has a solution $\mcD\in\mcM_S$. 
\end{theorem}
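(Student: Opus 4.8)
The plan is to collapse the form-valued equation (\ref{eqn42}) into a single scalar equation in the variable $r$, and then to solve that scalar equation by combining monotonicity with the intermediate value theorem.

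First I would note that (\ref{eqn42}) is exactly the case $\lambda=r^2$ of (\ref{eqn41}). By Lemma \ref{lemma41}, once (\ref{eqn41}) is solvable the factor $\lambda$ is forced by $r$, and by Proposition \ref{prop44} this forced value is precisely the function $\lambda(r)$, attained by a form in $\mcM_S$. Consequently, for a fixed $r\in(\rho,1)$, equation (\ref{eqn42}) has a solution $\mcD\in\mcM_S$ if and only if $\lambda(r)=r^2$: if some $\mcD\in\mcM_S$ solves (\ref{eqn42}) then it solves (\ref{eqn41}) with $\lambda=r^2$, so Lemma \ref{lemma41} gives $r^2=\lambda(r)$; conversely the $\mcM_S$-solution furnished by Proposition \ref{prop44} solves (\ref{eqn41}) with $\lambda=\lambda(r)=r^2$, which is (\ref{eqn42}). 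Thus the theorem is equivalent to the assertion that $h(r):=\lambda(r)-r^2$ has a unique zero on $(\rho,1)$.

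Uniqueness is then immediate. Proposition \ref{prop44} tells us that $\lambda(r)$ is decreasing and continuous, while $r\mapsto r^2$ is strictly increasing; hence $h$ is continuous and strictly decreasing on $(\rho,1)$ and can vanish at most once. For existence I would apply the intermediate value theorem, using the two explicit bounds of Lemma \ref{lemma45} to pin down the sign of $h$ at the two ends of the interval. Near the right endpoint I would use the upper bound $\lambda(r)\leq \frac{2}{2+r}$: since $\frac{2}{2+r}\to \frac{2}{3}<1$ while $r^2\to 1$ as $r\to1$, one has $\frac{2}{2+r}<r^2$ (equivalently $2r^2+r^3>2$) for $r$ well below $1$, so $h(r)<0$ there. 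Near the left endpoint I would use the lower bound; evaluating its denominator at $r=\rho$ and using the identities $\rho^2=1-\rho$ and $1+\rho+\rho^2=2$, it simplifies to $\rho^{-2}-\frac{\rho}{4}$, which is strictly smaller than $\rho^{-2}$, whence $\lambda(\rho)\geq(\rho^{-2}-\frac{\rho}{4})^{-1}>\rho^2$. By continuity $h>0$ just to the right of $\rho$, and the intermediate value theorem produces the desired zero.

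The computations verifying the two sign conditions are routine, and essentially everything rests on results already established (Lemma \ref{lemma41}, Proposition \ref{prop44}, Lemma \ref{lemma45}). The only place that needs care is the estimate at the left endpoint, where the numerical gap between the lower bound and $\rho^2$ is small; this is exactly why I would avoid decimal approximations and instead exploit the algebraic identities $\rho^2=1-\rho$ and $1+\rho+\rho^2=2$ (so that $2+2\rho+2\rho^2=4$) to obtain the clean strict inequality $(\rho^{-2}-\rho/4)^{-1}>\rho^2$ at $r=\rho$.
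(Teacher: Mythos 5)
Your proposal is correct and is essentially the paper's own argument: the paper likewise reduces (\ref{eqn42}) to the scalar equation $\lambda(r)=r^2$, invokes Proposition \ref{prop44} for continuity and monotonicity of $\lambda(r)$, and uses the bounds of Lemma \ref{lemma45} to note that $\big(\tfrac{1}{1-\rho}-\tfrac{\rho}{2+2\rho+2\rho^2}\big)^{-1}>1-\rho=\rho^2$ at the left end while $\tfrac{2}{3}<1$ at the right end, so existence follows from the intermediate value theorem and uniqueness from the fact that $\lambda(r)$ is decreasing while $r^2$ is strictly increasing. The only cosmetic difference is that you spell out the endpoint algebra explicitly (via $1+\rho+\rho^2=2$, so the lower bound at $r=\rho$ is $(\rho^{-2}-\rho/4)^{-1}>\rho^2$), a computation the paper leaves implicit.
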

\begin{proof}
By Proposition \ref{prop44}, we see that there is a continuous function $\lambda(r)$ so that $\mcR_\lambda(\mcD)=\lambda(r)\mcD$ has a solution. Noticing that 
\[\big(\frac{1}{1-\rho}-\frac{\rho}{2+2\rho+2\rho^2}\big)^{-1}>1-\rho=\rho^2,\text{ and } \frac{2}{3}<1,\]
there exists $\rho<r<1$ such that $\lambda(r)=r^2$ by Lemma \ref{lemma45}. The uniqueness follows from the fact that $\lambda(r)$ is decreasing in $r$, while $r^2$ is strictly increasing.  
\end{proof}

\noindent\textbf{Remark.} We can see the uniqueness of $r$ from another point of view. Let $\theta=\frac{\log r}{\log \rho}$, we will see in Section 7 that $\theta+d_H$ is the \textit{walk dimension} of the resulting diffusion process on the metric measure space $(\G,d,\mu_H)$, whose uniqueness is shown in \cite{GHL} under some weak conditions.

\subsection{The uniqueness}
In this subsection, we consider the uniqueness of the solution to (\ref{eqn41}) or $(\ref{eqn42})$. The proof is inspired by Sabot's work \cite{Sabot}.
\begin{theorem}\label{thm47}
Let $\rho<r<1$ and $\mcD\in\mcM_S$ be a symmetric solution to (\ref{eqn41}). Then $\mcD$ is the unique solution in $\mcM$ to (\ref{eqn41}).
\end{theorem}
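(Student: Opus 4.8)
The plan is to establish uniqueness by exploiting the symmetry group of $V_0$ together with a convexity/extremality argument in the spirit of Sabot. The key point is that Theorem \ref{thm40}(a) already asserts $\lambda(r)$ is uniquely determined by $r$ (Lemma \ref{lemma41}), so any solution $\mcD'\in\mcM$ must satisfy $\mcR_r\mcD'=\lambda(r)\mcD'$ with the \emph{same} $\lambda(r)$ as the symmetric solution $\mcD$. First I would record that the dihedral group $G$ of symmetries of the equilateral triangle $V_0$ acts on $\mcM$, and that because the infinite i.f.s. $\{F_w\}_{w\in W_1}$ and hence the whole construction of $\Psi_r$ is invariant under the reflection exchanging $q_1,q_2$, the renormalization map $\mcR_r$ commutes with this reflection $\sigma$; that is, $\mcR_r(\mcD'\circ\sigma)=(\mcR_r\mcD')\circ\sigma$. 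Consequently, if $\mcD'$ solves (\ref{eqn41}), so does $\mcD'\circ\sigma$, and by averaging, so does the symmetrized form $\frac12(\mcD'+\mcD'\circ\sigma)\in\mcM_S$.

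Next I would set up the comparison argument. Suppose $\mcD'\in\mcM$ is any solution and let $\mcD\in\mcM_S$ be the symmetric solution from Proposition \ref{prop44}. Following the pattern already used in the proof of Lemma \ref{lemma41}, I would introduce the two ratios
\[
\theta_+=\sup_{u\neq\text{const}}\frac{\mcD'(u)}{\mcD(u)},\qquad \theta_-=\inf_{u\neq\text{const}}\frac{\mcD'(u)}{\mcD(u)},
\]
which are finite and positive since both forms are non-degenerate on the finite set $V_0$. Taking $u$ to realize $\theta_+$ and letting $f$ be the $\Psi_r\mcD$-harmonic extension of $u$, the chain
\[
\lambda(r)\,\mcD'(u)=\mcR_r\mcD'(u)\leq\Psi_r\mcD'(f)\leq\theta_+\,\Psi_r\mcD(f)=\theta_+\,\mcR_r\mcD(u)=\theta_+\,\lambda(r)\,\mcD(u)
\]
shows the inequality is saturated, forcing $f$ to be $\Psi_r\mcD'$-harmonic as well and forcing equality $\Psi_r\mcD'(f)=\theta_+\Psi_r\mcD(f)$ cellwise. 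The symmetric argument applied to $\theta_-$ produces the reverse extremal behavior. The crux is then to deduce from these saturated inequalities that $\theta_+=\theta_-$, i.e. $\mcD'=\theta_+\mcD$, which gives uniqueness up to the scalar constant.

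I expect the main obstacle to be this last deduction: passing from the two-sided extremal equalities to strict rigidity $\theta_+=\theta_-$. The difficulty is that $\Psi_r$ lives on the \emph{infinite} graph $\bar V_1$, so one cannot simply invoke a finite-graph strict-convexity lemma; instead I would argue that equality in $\Psi_r\mcD'(f)=\theta_+\Psi_r\mcD(f)$ forces $\mcD'(f\circ F_w)=\theta_+\mcD(f\circ F_w)$ for every cell $w\in W_1$, and by varying over the harmonic functions and using that the traces of $f$ onto the individual cells $F_wV_0$ span enough of $l(V_0)$ (together with the self-similar rescaling), conclude $\mcD'=\theta_+\mcD$ as quadratic forms on $V_0$. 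Here the symmetry reduction is what keeps the problem tractable: it lets me first match $\mcD'$ against the symmetric $\mcD$ and then use the reflection-invariance to rule out any antisymmetric discrepancy. The continuity of $\mcR_r$ from Theorem \ref{thm32} and the sharp domain description of Proposition \ref{prop33} are the technical tools I would lean on to justify the cellwise decomposition of energy and the harmonicity characterization.
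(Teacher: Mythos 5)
Your setup is sound and coincides with the paper's opening move: the extremal ratio $\theta_+$, the saturated chain, and the conclusions that the $\Psi_r\mcD$-harmonic extension $f$ of an extremal $u$ is also $\Psi_r\mcD'$-harmonic and that $\mcD'(f\circ F_w)=\theta_+\,\mcD(f\circ F_w)$ for every $w\in W_1$. The gap is exactly where you place it, but your proposed repair cannot work, even in principle. Since $\theta_+=\sup_{v\neq\text{const}}\mcD'(v)/\mcD(v)$, the form $\mcD'-\theta_+\mcD$ is negative semidefinite on $l(V_0)$ modulo constants, and a negative semidefinite quadratic form vanishes only on its kernel; hence $\{v:\mcD'(v)=\theta_+\mcD(v)\}$ is exactly the top eigenspace $L_+$ of the pencil, plus constants. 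The cellwise equalities therefore say that \emph{every} trace $f\circ F_w|_{V_0}$ is confined to $L_++\text{constants}$. Unless $L_+$ is already two-dimensional modulo constants --- which is precisely the statement $\mcD'=\theta_+\mcD$ you are trying to prove --- these traces live in a single line modulo constants, so they can never ``span enough of $l(V_0)$''; the spanning claim is equivalent to the conclusion, and the argument is circular. The preliminary averaging step has a separate flaw: $\mcR_r$ is not linear in $\mcD$ (it is concave, being an infimum over extensions of functionals linear in $\mcD$), so $\tfrac12(\mcD'+\mcD'\circ\sigma)$ is a priori only a supersolution, $\mcR_r\big(\tfrac12(\mcD'+\mcD'\circ\sigma)\big)\geq\lambda(r)\cdot\tfrac12(\mcD'+\mcD'\circ\sigma)$; this is harmless here only because the symmetric solution is given by hypothesis.

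What the paper does with the confinement is the opposite of spanning, and it needs two further ingredients absent from your outline. First, the confinement makes $L_+$ invariant under the linear map $u\mapsto h_u\circ F_0$ on $l(V_0)$ modulo constants ($h_u$ the $\Psi_r\mcD$-harmonic extension); Lemma \ref{lemma48}, proved via positivity of harmonic functions and Perron--Frobenius, shows this map has eigenspaces $E_s$ (symmetric) and $E_a$ (antisymmetric) with eigenvalues $\lambda(r)$ and $\eta$, $|\eta|<\lambda(r)$, so the only invariant lines are $E_s$ and $E_a$. This yields that $\mcD'$ is diagonal with respect to $E_s\oplus E_a$, i.e.\ $\mcD'\in\mcM_S$ --- but it does not yet exclude a symmetric solution whose two eigenvalues $\kappa_s\neq\kappa_a$ differ. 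Second, the paper rules this out by contradiction rather than by direct rigidity: if such a $\mcD'$ existed, the saturation argument applied to both eigenspaces gives $h_s\circ F_w\in E_s+\text{constants}$ and $h_a\circ F_w\in E_a+\text{constants}$ for all $w\in W_1$, and then Sabot's argument (Lemma 5.9 of \cite{Sabot}) shows that \emph{every} $\tilde{\mcD}\in\mcM_S$ has the same harmonic functions $h_s,h_a$ and hence also solves (\ref{eqn41}); this contradicts Lemma \ref{lemma43}, which forces all normalized fixed points into $\tilde{\mcM}_S^{[s,1]}$ for some $s>0$. Without a substitute for these two steps --- the spectral identification of the invariant line, and a global mechanism (here, the uniform bound of Lemma \ref{lemma43}) to kill a second symmetric solution --- your proposal does not close.
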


For fixed $\rho<r<1$ and $\mcD\in\mcM_S$ satisfying (\ref{eqn41}), for convenience, we always write 

1. $h_s$ for the harmonic function with $h_s(q_0)=0$, $h_s(q_1)=h_s(q_2)=1$, and denote $E_s=\{f\in l(V_0): f(q_0)=0,f(q_1)=f(q_2)=c, c\in \mathbb{R}\}$;

2. $h_a$ for the harmonic function with $h_a(q_0)=0$, $h_a(q_1)=-h_a(q_2)=1$, and denote $E_a=\{f\in l(V_0): f(q_0)=0,f(q_1)=-f(q_2)=c, c\in \mathbb{R}\}$.

Both $h_s,h_a$ are harmonic with respect to $\Psi_r\mcD$ on $\bar{V}_1\setminus V_0$, i.e. $\Psi_r\mcD(h_s,f)=\Psi_r\mcD(h_a,f)=0$
for any $f\in Dom(\Psi_r\mcD)$ such that $f|_{V_0}=0$. 

\begin{lemma}\label{lemma48}
	For $r$, $\mcD$ as above, we have 
	\[h_s(F_1q_0)=h_s(F_2q_0)=\lambda(r),\quad h_a(F_1q_0)=-h_a(F_2q_0)=\eta,\]
	for some $|\eta|<\lambda(r)$.
\end{lemma}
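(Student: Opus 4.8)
The plan is to exploit two structures: the left--right reflection symmetry of the network, and the geometric coincidence, special to the golden ratio, that the apex $q_0$ has exactly two neighbours in $(V_1,\sim)$.

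First I would set up the symmetry. Let $\sigma$ denote the Euclidean reflection across the vertical axis through $q_0$; it fixes $q_0$, swaps $q_1\leftrightarrow q_2$, and conjugates $F_1\leftrightarrow F_2$ and $F_0\leftrightarrow F_0$, so that $\sigma\circ F_w=F_{\bar w}\circ\sigma$ (where $\bar w$ swaps the symbols $1,2$) and the collection $\{F_w\}_{w\in W_1}$ together with the weights $r^{-|w|+1}$ is preserved. Since $\mcD\in\mcM_S$ is invariant under swapping $q_1,q_2$, a direct change of variables in Definition \ref{def22} then gives $\Psi_r\mcD(f\circ\sigma)=\Psi_r\mcD(f)$. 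The boundary data of $h_s$ is $\sigma$-invariant while that of $h_a$ changes sign, so by uniqueness of the harmonic extension $h_s\circ\sigma=h_s$ and $h_a\circ\sigma=-h_a$. As $\sigma$ interchanges $F_1q_0$ and $F_2q_0$, this immediately yields $h_s(F_1q_0)=h_s(F_2q_0)$ and $h_a(F_1q_0)=-h_a(F_2q_0)=:\eta$, establishing the two equalities and defining $\eta$.

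Next I would identify the value $\lambda(r)$. The key observation is that $\rho^2=1-\rho$ forces $F_0q_1=F_1q_0$ and $F_0q_2=F_2q_0$, and that the only cell of $W_1$ containing the apex $q_0$ is the top cell $F_0V_0$ (of unit renormalization weight, since the word $0$ has length $1$). Hence in the weighted graph $(V_1,\sim)$ the vertex $q_0$ has exactly the two neighbours $F_1q_0$ and $F_2q_0$, joined by edges of conductance $a$. Using that a harmonic function realizes the trace bilinearly, i.e. $\Psi_r\mcD(h_s,g)=\mcR_r\mcD(h_s|_{V_0},g|_{V_0})$ for every $g\in Dom(\Psi_r\mcD)$, I would take $g$ to be the indicator of $q_0$ and invoke $\mcR_r\mcD=\lambda(r)\mcD$. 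The left side collapses to the flux $-a\big(h_s(F_1q_0)+h_s(F_2q_0)\big)$ across the two edges at $q_0$, while the right side is $\lambda(r)\,\mcD\big((0,1,1),(1,0,0)\big)=-2a\lambda(r)$; comparing the two and using the symmetry already proved gives $h_s(F_1q_0)=h_s(F_2q_0)=\lambda(r)$.

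Finally, for the bound $|\eta|<\lambda(r)$ I would compare $h_s$ with $h_a$. The combinations $h_s\pm h_a$ are harmonic with boundary data $(0,0,2)$ and $(0,2,0)$ respectively, hence nonnegative on $\bar{V}_1$ by the minimum principle. Evaluating at $F_1q_0$ gives $\lambda(r)\mp\eta=(h_s\mp h_a)(F_1q_0)\ge 0$, i.e. $|\eta|\le\lambda(r)$. For strictness, if say $\lambda(r)-\eta=(h_s-h_a)(F_1q_0)=0$, then $h_s-h_a$ is a nonnegative harmonic function vanishing at the interior vertex $F_1q_0$; the strong minimum principle then forces it to vanish on the whole connected network, contradicting its boundary value $2$ at $q_2$. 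The main obstacle is this last step: one must make the strong minimum principle rigorous on the infinite graph $(V_1,\sim)$ and check that $F_1q_0$ is genuinely an interior vertex that can be joined to $q_2$ through interior vertices, so that the vanishing propagates; this is precisely where the connectivity of the gasket graph after deletion of boundary points is used.
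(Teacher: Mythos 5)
Your proof is correct and is essentially the paper's own argument: the same flux computation (pairing the harmonic function $h_s$ against $\delta_{q_0}$, using that $F_0V_0=\{q_0,F_1q_0,F_2q_0\}$ is the unique cell containing $q_0$, with unit weight, and that $\mcR_r\mcD=\lambda(r)\mcD$) yields $h_s(F_1q_0)+h_s(F_2q_0)=2\lambda(r)$, and the reflection symmetry you spell out via $\sigma$ is exactly what the paper invokes implicitly through $\mcD\in\mcM_S$. Your endgame for $|\eta|<\lambda(r)$ is also equivalent to the paper's: the paper states it as Perron--Frobenius for the positive $2\times 2$ matrix $M$ mapping $\big(h(q_1),h(q_2)\big)$ to $\big(h(F_1q_0),h(F_2q_0)\big)$ (which for a symmetric positive matrix is precisely the inequality $|m_{11}-m_{12}|<m_{11}+m_{12}$, i.e.\ your $\lambda(r)\pm\eta>0$) and proves positivity of the entries by exactly your strong-minimum-principle propagation with $\delta_p$ as test functions; the one point to phrase carefully in your version is that $q_1,q_2\in \bar V_1\setminus V_1$ are not vertices of the graph, so the final contradiction comes from $h\equiv 0$ on $V_1$ combined with continuity and the density of $V_1$ in $\bar V_1$, rather than from a graph path literally terminating at $q_2$.
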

\begin{proof}
For convenience, we write $\mcD$ in the form $\mcD(f)=a\big(f(q_0)-f(q_1)\big)^2+a\big(f(q_0)-f(q_2)\big)^2+b\big(f(q_1)-f(q_2)\big)^2$, with $a>0,b\geq 0$. 

First, let $h=1-h_s$, we have $\mcR_r\mcD(h_s,h)=-2a\lambda(r)$. On the other hand, let $f\in l(V_1)$ be defined as $f(p)=\delta_{q_0,p}$, then clearly $f\in Dom(\Psi_r\mcD)$, and $f|_{V_0}=h|_{V_0}$. Since $h_s$ is harmonic, 
\[\Psi_r\mcD(h_s,h)=\Psi_r\mcD(h_s,f)=-ah_s(F_1q_0)-ah_s(F_2q_0).\]
This shows the first assertion since $\mcR_r\mcD(h_s,h)=\Psi_r\mcD(h_s,h)$. 

Next, by the symmetry of $\mcD$, there exists a number $\eta$ such that $h_a(F_1q_0)=-h_a(F_2q_0)=\eta$. We need to show that $|\eta|<\lambda(r)$. We consider the matrix $M$ such that 
\[\big(h(F_1q_0),h(F_2q_0)\big)^t=M\big(h(q_1),h(q_2)\big)^t,\]
holds for any harmonic function $h$ with $h(q_0)=0$. Due to the Perron-Frobenius theorem, it suffices to show that each entry of $M$ is positive. This can be deduced by proving the harmonic function $h_1$ with boundary value $h_1(q_1)=1,h_1(q_0)=h_1(q_2)=0$ is positive on $V_1\setminus V_0$. To see this, we assume there exists $p\in V_1\setminus V_0$ such that $h(p)=0$. Let $\psi_p\in Dom(\Psi_r\mcD)$ be defined as $\psi_p(q)=\delta_{p,q}$, then $\Psi_r\mcD(\psi_p,h_1)=0$, so $h_1(p)$ is the weighted average of its neighbours. Thus $h_1$ is zero on the neighbours of $p$. Repeating the argument, we see that $h|_{V_1}=0$. A contradiction. 
\end{proof}

\begin{proof}[Proof of Theorem \ref{thm47}]
Assume there is another solution $\mcD'\in\mcM$ to (\ref{eqn41}). 

Firstly, we will show that $\mcD'$ is also symmetric. By diagonalizing $\mcD'$ with respect to $\mcD$, we have two $1$-dimensional subspaces $L_1,L_2$ of $l(V_0)$ such that   

\noindent1. $L_1,L_2$ are orthogonal with respect to both $\mcD$ and $\mcD'$;

\noindent2. $\mcD'|_{L_1}=\kappa_1 \mcD|_{L_1}$ and $\mcD'|_{L_2}=\kappa_2\mcD|_{L_2}$, with $0<\kappa_1<\kappa_2$. 

Let $u\in L_2$ and $h_u$ be the harmonic extension of $u$ with respect to $\Psi_r\mcD$. Then
\[\begin{aligned}
\lambda(r)\mcD'(u)&=\kappa_2 \lambda(r)\mcD(u)=\kappa_2\Psi_r\mcD(h_u)=\sum_{w\in W_{1}} r^{-|w|+1}\kappa_2\mcD(h_u\circ F_w)\\
&\geq \sum_{w\in W_{1}} r^{-|w|+1}\mcD'(h_u\circ F_w)=\Psi_r\mcD'(h_u)\geq \lambda(r)\mcD'(u).
\end{aligned}\]
Clearly, this implies that $h_u\circ F_w\in L_2+constants$ for each $w\in W_1$. In particular, we have $h_u\circ F_0\in L_2+constants$, which means $L_2+constants$ is an invariant space under the mapping $u$ to $h_u\circ F_0$. By Lemma \ref{lemma48}, we see that $L_2+constants$ is either $E_s+constants$ or $E_a+constants$. Thus, we have $\mcD'\in \mcM_S$.

Secondly, from the above argument, it is not hard to see that $h_s\circ F_w\in E_s+constants$ and $h_a\circ F_w\in E_a+constants$, for any $w\in W_1$.

Lastly,  arbitrarily pick a $\tilde{\mcD}\in \mcM_S$, we will prove that $\tilde{\mcD}$ must also solve (\ref{eqn41}), which obviously contradicts Lemma \ref{lemma43}. To achieve this purpose, let $\tilde{h}_s$ and $\tilde{h}_a$ be the harmonic functions with respect to $\Psi_r\tilde{\mcD}$,  with the same boundary value on $V_0$ as $h_s,h_a$. By following a same argument as Lemma 5.9 in Sabot's paper \cite{Sabot}, we can see that $\tilde{h}_s=h_s$ and $\tilde{h}_a=h_a$. For convenience of readers, we reproduce the proof here. Write $g=h_s-\tilde{h}_s$. Also, for each $w\in W_1$, let $g_{w,s}\in E_s+constants, g_{w,a}\in E_a+constants$ such that $g_w=:g\circ F_w=g_{w,s}+g_{w,a}$. Then, we can see that 
\[\begin{aligned}
\Psi_r\tilde{\mcD}(g)&=\Psi_r\tilde{\mcD}(h_s,g)=\sum_{w\in W_{1}}r^{-|w|+1}\tilde{\mcD}(h_{s,w},g_w)=\sum_{w\in W_{1}}r^{-|w|+1}\tilde{\mcD}(h_{s,w},g_{w,s})\\&=c\sum_{w\in W_{1}}r^{-|w|+1}\mcD(h_{s,w},g_{w,s})=c\sum_{w\in W_{1}}r^{-|w|+1}\mcD(h_{s,w},g_w)=c\Psi_r\mcD(h_s,g)=0,
\end{aligned}\] 
for some constant $c$, with $h_{s,w}=: h_s\circ F_w$.
Thus $g=0$ as desired. As a consequence, we can easily see that, $\tilde{\mcD}$ is a solution to (\ref{eqn41}), so we arrive the desired contradiction. \end{proof}

Finally, Theorem \ref{thm40} immediately follows from Proposition \ref{prop44}, Theorem \ref{thm46} and \ref{thm47}.

\section{Construction of the Dirichlet form on $\G$}
We will construct a resistance form on the golden ratio Sierpinski gasket $\G$ in this section. Let $\rho<r<1$, $\mcD$ be the unique solution to (\ref{eqn42}), i.e. $\mcR_r\mcD=r^2 \mcD$. We will focus on this standard form in most contents. For short, we write 
\[\theta=\frac{\log r}{\log \rho},\quad \rho_w=\prod_{n=1}^{|w|}\rho_{w_n},\quad r_w=\rho_w^{\theta},\]
with $\rho_{0}=\rho^2$ and $\rho_1=\rho_2=\rho$. Obviously, $\rho_w$ is the contraction ratio of $F_w$.

The following definition is similar to the construction in \cite{ki3}, though we use the infinite graphs at each level.

\begin{definition}\label{def51}
 (a). For $m\geq 0$ and $f\in C(\bar{V}_m)$, we write $\mcD^{(m)}(f)=\sum_{w\in W_m}r_w^{-1}\mcD(f\circ F_w)$, and $\mcF^{(m)}=\{f\in C(\bar{V}_m):\mcD^{(m)}(f)<\infty\}$. In addition, for $f,g\in \mcF^{(m)}$, we define
\[\mcD^{(m)}(f,g)=\sum_{w\in W_m}r_w^{-1}\mcD(f\circ F_w,g\circ F_w).\]
	
(b). Define $\mathcal{F}=\{f\in C(\G):\lim_{m\to\infty}\mcD^{(m)}(f)<\infty\}$. For $f,g\in \mathcal{F}$, define 
\[\mcE(f,g)=\lim_{m\to\infty}\mcD^{(m)}(f,g).\]
\end{definition}

The limit in (b) always exists due to fact that 
\[\mcD^{(m+1)}(f)=\sum_{w\in W_{m+1}}r_w^{-1}\mcD(f\circ F_w)=\sum_{w\in W_m}r_w^{-1}r^{-2}\Psi_r\mcD(f\circ F_w)\geq \sum_{w\in W_m}r_w^{-1}\mcD(f\circ F_w)=\mcD^{(m)}(f).\]
In the rest of this section, we will show that $(\mcE,\mathcal{F})$ is a good form.

\begin{lemma}\label{lemma52}
	For $m\geq 0$, $(\mcD^{(m)},\mcF^{(m)})$ is a resistance form on $\bar V_m$. In addition, let 
	\[R_m(p,q)=\sup_{f\in \mcF^{(m)}}\frac{|f(p)-f(q)|^2}{\mcD^{(m)}(f)},\]
    then we have $R_n(p,q)=R_m(p,q)$ if $p,q\in \bar{V}_m$ and $n\geq m$.
\end{lemma}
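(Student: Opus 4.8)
The plan is to prove both assertions of Lemma \ref{lemma52} by reducing everything to the single-level result already established in Proposition \ref{prop23}. The key structural observation is a \emph{nesting identity}: for $m\geq 1$ and $w\in W_{m-1}$, the quantity $\sum_{v\in W_1} r_{v}^{-1}\mcD\big((f\circ F_w)\circ F_v\big)$ is, up to the scaling factor $r^{-2}=\rho^{-2\theta}$, exactly $\Psi_r\mcD(f\circ F_w)$; consequently $\mcD^{(m)}$ on $\bar V_m$ is obtained from $\mcD^{(m-1)}$ on $\bar V_{m-1}$ by replacing each cell form $\mcD(g\circ F_w)$ with its one-level refinement $r^{-2}\Psi_r\mcD(g\circ F_w)$. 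This is the same mechanism that produced the monotonicity $\mcD^{(m+1)}\geq\mcD^{(m)}$ displayed right before the lemma, and it is what lets me bootstrap from the $m=1$ case.

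First I would verify that $(\mcD^{(m)},\mcF^{(m)})$ satisfies the five axioms (RF1)--(RF5) of Definition \ref{def21}. Since $\mcD^{(m)}(f)=\sum_{w\in W_m} r_w^{-1}\mcD(f\circ F_w)$ is a sum of nonnegative forms, symmetry, nonnegativity and (RF1) are immediate once one checks that $\mcD^{(m)}(f)=0$ forces $f$ constant: this follows because $\G$ is connected through overlapping cells (each $F_wV_0$ for $w\in W_m$ shares boundary points with its neighbours, and $\bar V_m$ is connected), so vanishing energy on every cell propagates constancy across all of $\bar V_m$. Axioms (RF3) and (RF5) are inherited cellwise from $\mcD\in\mcM$ and the finite-to-one gluing structure; the Markov property (RF5) holds because truncation $\bar f=\min\{\max\{f,0\},1\}$ decreases $\mcD(\bar f\circ F_w)$ for each $w$ by (RF5) applied to $\mcD$. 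For (RF2) and (RF4) I would appeal to the identification of $\mcD^{(m)}$ with a single-level form of the type studied in Proposition \ref{prop23}: indeed, via the self-similar map each top cell $F_w\bar V_1$ ($w\in W_{m-1}$) carries a copy of $\Psi_r\mcD$ on $\bar V_1$, so finiteness of the resistance metric and Hilbert-space completeness transfer from the $\bar V_1$ case already proved. The crucial input is that $\rho<r<1$, which guarantees through Proposition \ref{prop23} that each such copy is a bona fide resistance form with finite resistance diameter.

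For the second assertion, the stability $R_n(p,q)=R_m(p,q)$ for $p,q\in\bar V_m$ and $n\geq m$, I would use the general \emph{trace} (restriction) theory of resistance forms from \cite{ki3}: the effective resistance between two points depends only on the form traced onto any finite set containing them, and refining the network beyond level $m$ does not change the trace onto $\bar V_m$. Concretely, by induction it suffices to show $R_{m+1}(p,q)=R_m(p,q)$ for $p,q\in\bar V_m$. Here I would invoke the renormalization fixed-point equation $\mcR_r\mcD=r^2\mcD$, i.e.\ $[\Psi_r\mcD]_{V_0}=r^2\mcD$, which says precisely that tracing the refined one-level form $r^{-2}\Psi_r\mcD$ back onto $V_0$ reproduces $\mcD$. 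Applying this decimation invariance cellwise over all $w\in W_m$ shows that the trace of $\mcD^{(m+1)}$ onto $\bar V_m$ equals $\mcD^{(m)}$, and since effective resistance is a trace invariant, $R_{m+1}=R_m$ on $\bar V_m$.

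The main obstacle I anticipate is the bookkeeping in the cellwise trace argument: one must check that tracing the level-$(m+1)$ form onto $\bar V_m$ can genuinely be computed cell-by-cell, which requires that the harmonic extension minimizing $\mcD^{(m+1)}$ subject to prescribed values on $\bar V_m$ decouples across the cells $\{F_w\bar V_1\}_{w\in W_{m-1}}$. This decoupling is not automatic because adjacent cells share boundary vertices in $V_m$, so the minimization is genuinely coupled through those shared points; the resolution is that the shared points all lie in $\bar V_m$ and are therefore held fixed during the trace, which restores independence of the cellwise minimizations. Verifying this interface condition carefully — and confirming that the fixed-point relation $\mcR_r\mcD=r^2\mcD$ is exactly the right normalization to make the traces agree rather than merely be comparable — is where the real content lies; the axiom verification in the first part is comparatively routine.
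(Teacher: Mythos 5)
Your overall strategy coincides with the paper's: induct on the level, regard $\mcD^{(m)}$ as copies of $r^{-2}\Psi_r\mcD$ glued over the cells $F_w\bar V_1$, $w\in W_{m-1}$, use Proposition \ref{prop23} for the base case, and derive the resistance stability from the fixed-point relation $\mcR_r\mcD=r^2\mcD$ applied cellwise. Your treatment of the second assertion is correct and is in fact more informative than the paper's, which dismisses $R_{m+1}=R_m$ on $\bar V_m$ as obvious: you correctly isolate the two ingredients, namely the decoupling of the cellwise minimizations (distinct level-$m$ cells meet only at points of $V_m\subset\bar V_m$, which are pinned by the constraint, and indeed $F_w\bar V_1\cap \bar V_m=F_wV_0$) and the normalization $\lambda=r^2$, which makes the trace of $\mcD^{(m+1)}$ onto $\bar V_m$ equal to $\mcD^{(m)}$ exactly rather than merely comparable. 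Equivalently, this is the pair of inequalities $\mcD^{(m+1)}(f)\geq\mcD^{(m)}(f|_{\bar V_m})$ (the display preceding the lemma), giving $R_{m+1}\leq R_m$, and the existence of an energy-preserving harmonic extension, giving $R_{m+1}\geq R_m$.

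The gap is in the first assertion, exactly where you declare matters ``comparatively routine.'' Completeness (RF2) does not simply ``transfer from the $\bar V_1$ case'': a Cauchy sequence $\{f_k\}$ in $\mcF^{(m+1)}$ yields Cauchy sequences $f_k\circ F_w$ in $\mcF^{(1)}$ for each of the \emph{countably many} cells, but convergence in a resistance form is only modulo additive constants, so the cell limits must be pinned consistently; the paper does this by first noting that $f_k|_{\bar V_m}$ converges in $\mcF^{(m)}$, which itself uses the monotonicity $\mcD^{(m)}\leq\mcD^{(m+1)}$. More seriously, the glued candidate limit must then be shown to be \emph{continuous} on $\bar V_{m+1}$. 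Continuity is automatic inside each cell, so the issue is concentrated at the points of $\bar V_m\setminus V_m$ (the bottom lines), which lie in no cell but are accumulation points of infinitely many cells; the paper proves continuity there from the uniform bound $\mcD^{(1)}(f\circ F_w)\leq r_w\sup_{k}\mcD^{(m+1)}(f_k)$, which forces the oscillation of $f$ on a cell $F_w\bar V_1$ to be of order $r_w^{1/2}$, hence small when the cell is small. The same continuity argument is also needed for (RF3): to produce a function in $\mcF^{(m+1)}$ with prescribed values on a finite set, one extends a function of $\mcF^{(m)}$ by cellwise harmonic extension and must verify that the glued extension is continuous --- ``inherited cellwise'' does not cover this, since the axiom concerns the glued object on $\bar V_{m+1}$, not the individual cells. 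These verifications constitute essentially the whole of the paper's proof, so without them your plan for the first assertion is incomplete, even though nothing in it points in a wrong direction.
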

\begin{proof}
	(RF1) and (RF5) are trivial. We only need to verify (RF2)-(RF4). For convenience, we focus on $(\mcD^{(2)},\mcF^{(2)})$ only, while for larger $m$, a same proof works inductively. 
	
	(RF2). Let $\{f_k\}_{k\geq 1}$ be a Cauchy sequence in $\mcF^{(2)}$. Then,  $f_k|_{\bar{V}_1}$ converges in $\mcF^{(1)}$ to some $\tilde{f}$ in $\mcF^{(1)}$, since $(\mcD^{(1)},\mcF^{(1)})$ is a resistance form. Also, for each $w\in W_1$, $f_k\circ F_w$ converges in $\mcF^{(1)}$ to a function $\tilde{f}_w$. Now, define $f\in l(\bar V_2)$  such that $f\circ F_w=\tilde{f}_w$ and $f|_{\bar{V_1}\setminus V_1}=\tilde{f}$.  
	
	We show that $f\in C(\bar V_2)$. It suffices to prove that $f$ is continuous at any point $p\in \bar{V}_1\setminus V_1$. In fact, for any $\varepsilon$, there exists $\delta$ and $N$ such that 
	
	1. for $q\in B_{\delta}(p)\cap\bar{V}_1$, we have $|f(p)-f(q)|<\varepsilon$;
	
	2. for $w\in \bigcup_{n=N}^\infty W_{1,n}$ and $q,q'\in F_w\bar{V}_1$, we have $|f(q)-f(q')|<\varepsilon$. This follows from the fact that $\mcD^{(1)}(f\circ F_w)\leq r_w\sup_{k\geq 1}\mcD^{(2)}(f_k)$.
	
	The continuity of $f$ follows immediately. Lastly, by using Fatou's lemma, we can directly check that $f_k$ converges to $f$ in $\mcF^{(2)}$. 
	
	(RF3). First, we observe that the minimal energy extension of $f\in \mcF^{(1)}$ to $l(V_2)$ is continuous by a same argument as in (RF2). So we have enough functions in $\mcF^{(2)}$.
	
	Let $V$ be a finite set and $u\in l(V)$. First, we always have $f_1\in \mcF^{(1)}$ such that $f_1|_{V\cap \bar{V}_1}=u|_{V\cap \bar{V}_1}$. Then we can extend $f_1$ to be a desired function in $\mcF^{(2)}$.
	
	(RF4). Let $p,q\in \bar{V}_2$ and $f\in\mcF^{(2)}$. If $p\in \bar{V}_1$, we let $p'=p$; otherwise we choose $p'\in V_1$ so that $p,p'\in F_w\bar{V_1}$ for some $w\in W_1$, and thus
	\[\mcD^{(2)}(f)\geq r_w^{-1}\mcD^{(1)}(f\circ F_w)\geq c_1\big(f(p)-f(p')\big)^2,\]
	for some $c_1>0$. Also, we define $q'$ in a same manner. It then follows that 
	\[\mcD^{(2)}(f)\geq c_2\Big(\big(f(p)-f(p')\big)^2+\big(f(p')-f(q')\big)^2+\big(f(q')-f(q)\big)^2\Big)\geq c_3\big(f(p)-f(q)\big)^2.\]
	(RF4) follows immediately. 
	
	Thus, we have proved that $(\mcD^{(2)},\mcF^{(2)})$ is a resistance form on $\bar V_2$. The claim that $R_2(p,q)=R_1(p,q)$ for $p,q\in \bar V_1$ is obvious. The same arguments can be used inductively for $m\geq 3$.
\end{proof}

In some situations, it is convenient to involve words in $\tilde{W}_*$.

\begin{lemma}\label{lemma53}
	Let $w\in \tilde{W}_*$ and $m$ be the number of $0'$s in $w$. Then we have \[\mcD^{(1)}(f\circ F_w)\leq r_w\mcD^{(m+1)}(f),\]
	for any $f\in \mcF^{(m+1)}$. As a consequence, there is a constant $c>0$ such that, for any $p,q\in F_w\bar{V}_1$,  we have
	\[R_{m+1}(p,q)\leq c d(p,q)^\theta.\]
\end{lemma}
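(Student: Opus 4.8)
\textbf{Proof proposal for Lemma \ref{lemma53}.}

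The plan is to prove the energy inequality $\mcD^{(1)}(f\circ F_w)\leq r_w\mcD^{(m+1)}(f)$ first, and then deduce the resistance estimate $R_{m+1}(p,q)\leq c\,d(p,q)^\theta$ from it together with the construction already in place. For the energy inequality, the key observation is that $w\in\tilde{W}_*$ is a general word, but only the letter $0$ carries the `extra' contraction ratio $\rho_0=\rho^2$ (versus $\rho_1=\rho_2=\rho$), and by the choice of $W_1$ (as words in $\bigcup_n\{1,2\}^n\times\{0\}$) each application of $F_0$ corresponds to descending one level in the tower $W_1\subset W_2\subset\cdots$. Concretely, I would argue that since $w$ contains exactly $m$ occurrences of $0$, the cell $F_w\bar V_1$ is covered, after refining the partition to level $m+1$, by cells $F_{v}\bar V_0$ with $v\in W_{m+1}$, and each such subcell sits inside $F_w\bar V_1$ in a compatible way. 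Thus, writing the level-$(m+1)$ energy as a sum over $W_{m+1}$ and discarding all terms coming from cells disjoint from $F_w\bar V_1$, one retains exactly a self-similar copy of $\mcD^{(1)}(f\circ F_w)$, scaled by the renormalization factor attached to the $m$ zeros, which is precisely $r_w=\rho_w^\theta$. The self-similarity $\mcR_r\mcD=r^2\mcD$, i.e. $\Psi_r\mcD=r^2\mcR_r\mcD$ at each decimation step, is what converts the $m$-fold descent into the single factor $r_w$.

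More carefully, I would proceed by induction on $m$, the number of zeros in $w$. The base case $m=0$ means $w\in\bigcup_n\{1,2\}^n$ contains no zero; here $F_w\bar V_1$ is (up to the identification built into $W_1$) a union of level-$1$ cells, and one checks $\mcD^{(1)}(f\circ F_w)\leq r_w\mcD^{(1)}(f)$ directly by summing the defining terms of $\mcD^{(1)}$ over the relevant subcollection of $W_1$ and using that the contraction ratios multiply as $r_{wv}=r_w r_v$. For the inductive step, I would isolate the outermost occurrence of $0$: write $w=w'\,0\,w''$ where $w'\in\{1,2\}^*$, and use the factorization $F_w=F_{w'}\circ F_0\circ F_{w''}$ together with the decimation identity to peel off one factor of $r$, passing from level $m+1$ down to level $m$ while accumulating the correct power $\rho_w^\theta$.

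For the resistance estimate, once the energy inequality is established the argument is short. For any $p,q\in F_w\bar V_1$, pick any $f\in\mcF^{(m+1)}$ and estimate
\[
|f(p)-f(q)|^2\leq R_1(p,q)\,\mcD^{(1)}(f\circ F_w)\leq R_1(p,q)\,r_w\,\mcD^{(m+1)}(f),
\]
where in the first inequality I view $p=F_w p_0$, $q=F_w q_0$ with $p_0,q_0\in\bar V_1$ and apply the resistance bound on $(\mcD^{(1)},\mcF^{(1)})$ to $f\circ F_w$. Taking the supremum over $f$ gives $R_{m+1}(p,q)\leq r_w R_1^*$, where $R_1^*=\sup_{p_0,q_0\in\bar V_1}R_1(p_0,q_0)$ is finite by Proposition \ref{prop23}. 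Finally, since $p,q\in F_w\bar V_1$ forces $d(p,q)\leq \rho_w\,\diam(\bar V_1)$ (because $F_w$ contracts by $\rho_w$), I would invert this to get $r_w=\rho_w^\theta\leq c\,d(p,q)^\theta$ only when $p,q$ are comparably spread out within the cell; to cover the general case cleanly I would instead choose $w$ to be the \emph{smallest} cell containing both $p$ and $q$, so that $d(p,q)\asymp \rho_w$, which yields $R_{m+1}(p,q)\leq r_w R_1^*\leq c\,d(p,q)^\theta$.

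The main obstacle I anticipate is the bookkeeping in the energy inequality: making precise how a general word $w\in\tilde W_*$ with $m$ zeros relates the level-$1$ form on $F_w\bar V_1$ to the level-$(m+1)$ form, given that $W_1$ is defined only up to the identification $F_w=F_{w'}$ and that the overlap $F_{122}\G=F_{211}\G$ means distinct words can name the same cell. The self-similar structure $\mcR_r\mcD=r^2\mcD$ does the analytic work, but one must verify that peeling off a factor $F_0$ genuinely corresponds to a single decimation step in the tower $\{\mcD^{(k)}\}$ and that no energy is double-counted or lost across the identification. The clean way to handle this is to work with the infinite i.f.s. $\{F_w\}_{w\in W_1}$ throughout, where by construction $F_w\G\cap F_{w'}\G=F_wV_0\cap F_{w'}V_0$ for $w\neq w'$, so that the energy decomposition over $W_{m+1}$ is genuinely a disjoint (non-overlapping) sum and the restriction to subcells of $F_w\bar V_1$ is unambiguous.
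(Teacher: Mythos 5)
Your proof of the energy inequality is essentially the paper's argument: the paper's entire proof of that part is the observation that $\{w\tau:\tau\in W_1\}\subset W_{m+1}$ (up to the identification of words defining the same map), so that dropping the nonnegative terms of $\mcD^{(m+1)}(f)=\sum_{v\in W_{m+1}}r_v^{-1}\mcD(f\circ F_v)$ indexed by words not of this form, and using $r_{w\tau}=r_wr_\tau$, gives $r_w^{-1}\mcD^{(1)}(f\circ F_w)\leq \mcD^{(m+1)}(f)$. That is exactly your first paragraph. Two remarks: the decimation identity $\mcR_r\mcD=r^2\mcD$ plays no role here (the mechanism is purely the word-set inclusion together with multiplicativity of the weights $r_w$, and an appeal to $\mcR_r$ would even point in the wrong direction, since the trace is an infimum), and your proposed induction on the number of zeros is unnecessary once the inclusion is noted.

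The resistance estimate, however, has a genuine gap. You bound $R_1(p_0,q_0)$ by the resistance diameter $R_1^*$ of $\bar V_1$, which only yields $R_{m+1}(p,q)\leq r_wR_1^*\asymp \rho_w^\theta$ — a bound by the \emph{cell size}, not by $d(p,q)^\theta$, and the lemma is stated for arbitrary $p,q\in F_w\bar V_1$, which may be much closer than $\rho_w$. Your repair — replacing $w$ by the smallest cell containing both points — does not work: first, for that smaller cell $F_{w^*}\G$ the points need not lie in the level-one skeleton $F_{w^*}\bar V_1$, so the inequality you proved cannot even be invoked for $w^*$; second, the comparability $d(p,q)\asymp\rho_{w^*}$ is false for general points of $\G$ (two points straddling the junction $F_0\G\cap F_1\G=\{F_1q_0\}$, one in $F_0\G$ and one in $F_1\G$, can be arbitrarily close while their only common cell is $\G$ itself), and any correct version of such a statement is precisely the chaining work the paper postpones to Lemma \ref{lemma54}, so it is not available here. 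The repair is also unnecessary: Proposition \ref{prop23} gives more than finiteness of $R_1^*$, namely the H\"older estimate (\ref{eqn22}), $R_1(p_0,q_0)\leq c\,d(p_0,q_0)^{\theta}$ for $p_0,q_0\in\bar V_1$ (recall $\theta=\log r/\log\rho$). Writing $p=F_wp_0$, $q=F_wq_0$, so that $d(p_0,q_0)=\rho_w^{-1}d(p,q)$, and combining with the energy inequality, the factors cancel exactly:
\[
\frac{|f(p)-f(q)|^2}{\mcD^{(m+1)}(f)}\leq r_w\,\frac{|(f\circ F_w)(p_0)-(f\circ F_w)(q_0)|^2}{\mcD^{(1)}(f\circ F_w)}\leq r_w\,c\,d(p_0,q_0)^\theta=c\,\rho_w^{\theta}\rho_w^{-\theta}d(p,q)^\theta=c\,d(p,q)^\theta,
\]
which is the claimed bound for the \emph{given} $w$, uniformly in $w$. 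This cancellation, not a re-choice of the cell, is the intended (and the paper's) conclusion of the proof.
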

\begin{proof}
	Noticing that $\{w\tau:\tau\in W_1\}\subset {W}_{m+1}$, the lemma is obvious by the definition of $\mcD^{(m+1)}$ and Proposition \ref{prop23}.
\end{proof}

Using Lemma \ref{lemma52} and \ref{lemma53}, we have the following estimate of the resistance metric. 

\begin{lemma}\label{lemma54}
	For $m\geq 0$ and $p,q\in \bar{V}_m$, define $\tilde{R}(p,q)=R_m(p,q)$. Then $\tilde{R}(p,q)$ is well defined on $(\bigcup_{m\geq 0}\bar{V}_m)\times (\bigcup_{m\geq 0}\bar{V}_m)$, and we have $\tilde{R}(p,q)\leq cd(p,q)^\theta$ for some $c>0$.
\end{lemma}  
\begin{proof}
	First, we claim that there is a constant $c_1>0$ such that 
	\[\tilde{R}(p,q)\leq c_1\rho_w^\theta,\quad  \forall w\in \tilde{W}_*, \forall p,q\in F_w \G\cap (\bigcup_{m\geq 0}\bar{V}_m).\] 
    We first consider the case $q\in F_w\bar{V}_1$. Assume that $p\in F_w\bar{V}_n$ for some $n\geq 1$, then we can find $\tau\in W_{n-1}$ such that $p\in F_wF_\tau \bar{V}_1$. We can then find a sequence 
	\[q=p_0,p_1,\cdots,p_{|\tau|+1}=p,\]
	such that $p_i\in F_wF_{[\tau]_{i-1}}\bar{V}_1\cap F_wF_{[\tau]_{i}}\bar{V}_1$ for $1\leq i\leq |\tau|$.  As a consequence, by using Lemma \ref{lemma53}, we see that 
	\[\tilde{R}(p,q)\leq \sum_{i=0}^{|\tau|} c_2d(p_i,p_{i+1})^\theta\leq  \sum_{i=0}^{|\tau|} c_2(\rho_w\rho^i)^\theta\leq \frac {c_2}{1-\rho^\theta}\rho_w^\theta,\]
	where $c_2$ is the same constant in Lemma \ref{lemma53}.
	For general $q$, we only need to set $c_1=\frac{2c_2}{1-\rho^\theta}\rho_w^\theta$. 
	
	Now, let $p,q\in \bigcup_{m=0}^\infty \bar{V}_m$. We choose $w,w'\in\tilde W_*$ such that $p\in F_w\G$, $q\in F_{w'}\G$ and 
	\[\rho d(p,q)\leq\rho_w,\rho_{w'}<\rho^{-1}d(p,q).\]
	In addition, we can find a chain 
	\[w=w^{(0)},w^{(1)},\cdots,w^{(k)}=w'\]
	such that $\min\{\rho_w,\rho_{w'}\}\leq\rho_{w^{(i)}}<\rho^{-2}\min\{\rho_w,\rho_{w'}\}$ of length at most $c_3$, where $c_3$ is a constant independent of $p,q$. By choosing a sequence $p=p_0,p_1,\cdots,p_{k+1}=q$ such that $p_i\in F_{w^{(i-1)}}\bar{V}_1\cap F_{w^{(i)}}\bar{V}_1$, $1\leq i\leq k$, we get the desired estimate as above. 
\end{proof}

Now, we can show that $(\mcE,\mcF)$ is a good form.
\begin{theorem}\label{thm55}
	 $(\mcE,\mcF)$ defined in Definition \ref{def51} is a strongly local regular resistance form on $\G$. 
\end{theorem}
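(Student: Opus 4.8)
The plan is to verify the five resistance-form axioms (RF1)–(RF5) for $(\mcE,\mcF)$ directly, using the tower of finite-level forms $(\mcD^{(m)},\mcF^{(m)})$ from Lemma~\ref{lemma52} together with the uniform resistance estimate of Lemma~\ref{lemma54}, and then to establish regularity and strong locality as separate endgame steps. The key structural fact I would exploit throughout is that $\mcD^{(m)}(f)$ is monotone increasing in $m$ (proved right after Definition~\ref{def51} using $\mcR_r\mcD=r^2\mcD$), so that $\mcE(f)=\lim_{m\to\infty}\mcD^{(m)}(f)=\sup_m \mcD^{(m)}(f)$ is a supremum of the compatible quadratic forms $\mcD^{(m)}$.

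\textbf{The axioms.} First I would dispose of the easy ones: (RF1) is immediate since $\mcE$ is a supremum of nonnegative symmetric forms, and $\mcE(f)=0$ forces every $\mcD^{(m)}(f)=0$, hence $f$ constant on each $\bar V_m$ and thus on $\G=\overline{\bigcup_m \bar V_m}$. The Markov property (RF5) passes to the limit since truncation decreases each $\mcD^{(m)}$. For (RF4), I would use Lemma~\ref{lemma54}: for distinct $p,q\in\G$ pick points of $\bigcup_m \bar V_m$ approximating them, and the bound $\tilde R(p,q)\le c\,d(p,q)^\theta$ gives a uniform control $|f(p)-f(q)|^2\le c\,d(p,q)^\theta\,\mcE(f)$, so $R(p,q)$ is finite; combined with (RF1) this also shows $R$ is a genuine metric. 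The extension property (RF3) I would prove by first extending $u\in l(V)$ (for finite $V$) to a function in some $\mcF^{(m)}$ whose $V$ is contained in $\bar V_m$, then taking the energy-minimizing (harmonic) extension to all of $\G$, which lies in $\mcF$ because its energy is controlled level-by-level.

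\textbf{Completeness (RF2)} is the main obstacle, and I would treat it carefully. Given an $\mcE$-Cauchy sequence $\{f_k\}$ (modulo constants), the resistance estimate from (RF4) shows it is uniformly Cauchy in the sup norm on $\bigcup_m\bar V_m$ after normalizing at a base point, hence converges uniformly to some $f\in C(\G)$ since $\G$ is compact and $\bigcup_m \bar V_m$ is dense. The real work is showing $f\in\mcF$ and $\mcE(f_k-f)\to 0$: for this I would apply Fatou's lemma at each finite level, using that $\mcD^{(m)}(f-f_k)\le \liminf_{j}\mcD^{(m)}(f_j-f_k)\le\limsup_j \mcE(f_j-f_k)$, and then let $m\to\infty$; the uniform boundedness of $\mcE(f_k)$ keeps $\mcE(f)$ finite. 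This mirrors the (RF2) argument already carried out for $\mcF^{(2)}$ in Lemma~\ref{lemma52}, so the new ingredient is only the passage through the $m\to\infty$ limit, which the monotonicity makes clean.

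\textbf{Regularity and strong locality.} For regularity I would show that $\bigcup_m \mcF^{(m)}$ (functions that are harmonic off some $\bar V_m$, equivalently finitely supported energy data) is dense in $\mcF$ in the $\mcE$-norm and dense in $C(\G)$ in the sup norm; the density in energy follows because $\mcD^{(m)}(f)\uparrow \mcE(f)$ means the harmonic-at-level-$m$ replacement of $f$ converges to $f$ in $\mcE$, and sup-norm density of such piecewise-harmonic functions was essentially obtained in Proposition~\ref{prop33}. Strong locality I would deduce from the local nature of each $\mcD^{(m)}$: if $f,g\in\mcF$ have disjoint supports (or $g$ constant on a neighborhood of $\mathrm{supp}\,f$), then $\mcD(f\circ F_w,g\circ F_w)=0$ for every word $w$ once the cells $F_w\G$ are small enough to lie inside the region where one function is constant, so $\mcD^{(m)}(f,g)=0$ for all large $m$ and hence $\mcE(f,g)=0$. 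I expect no essential difficulty here beyond bookkeeping, so the proof's weight really sits in (RF2).
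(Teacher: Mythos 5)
There is a genuine gap, and it sits exactly where you claim the proof is easy: the extension property (RF3) and, with it, point separation on all of $\G$. Your plan for (RF3) is to extend $u\in l(V)$ ``to a function in some $\mcF^{(m)}$ whose $V$ is contained in $\bar V_m$,'' but a general finite subset $V\subset\G$ is contained in \emph{no} $\bar V_m$: the set $\bigcup_{m\geq 0}\bar V_m$ is a countable union of vertex sets and line segments (Hausdorff dimension $1$), while $d_H(\G)\approx 1.68$, so generic points of $\G$ lie outside every $\bar V_m$. For such points your argument produces nothing; in particular you cannot exhibit any $f\in\mcF$ with $f(p)\neq f(q)$ for generic $p\neq q$, so you cannot even conclude $R(p,q)>0$. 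Your (RF4) step only bounds $R$ from \emph{above} by $c\,d(p,q)^\theta$; without separation, $(\mcE,\mcF)$ is at this stage only a resistance form on the dense subset $\bigcup_m\bar V_m$, not on $\G$. This is precisely the distinction the paper makes explicit: it first verifies the axioms on $\bigcup_{m\geq 0}\bar V_m$ (using minimal-energy extension of functions in $\mcF^{(m)}$, Lemma \ref{lemma52} and Lemma \ref{lemma54}), and then devotes the bulk of the proof to showing that $\mcF$ separates points of $\G$, which it reduces to proving that $\mcF$ is dense in $C(\G)$.

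The same gap infects your regularity step. You assert that sup-norm density of piecewise-harmonic functions ``was essentially obtained in Proposition \ref{prop33},'' but that proposition gives density of $Dom(\Psi_{r_1}\mcD)$ in $C(\bar V_1)$ only. A single application of it, followed by harmonic extension into the cells $F_w\G$, $w\in W_1$, controls nothing about the target function inside those cells: the harmonic extension in $F_w\G$ depends only on the three boundary values on $F_wV_0$, while $u$ may oscillate by a fixed amount across a large cell such as $F_0\G$. The paper closes this by an iterative, multi-level construction: fix $N$ with the oscillation of $u$ below $\varepsilon$ on all cells of level $\geq N$, match $u$ exactly at the vertices of cells up to level $N$ while applying Proposition \ref{prop33}, harmonically extend, and repeat $[N/2]+1$ times with $N$ decreased by $2$ each round, before a final harmonic extension. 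This density-in-$C(\G)$ argument is the real core of Theorem \ref{thm55}; it simultaneously delivers (RF3), the metric property of $R$, and regularity. By contrast, your treatment of (RF2) via normalization at a base point, uniform convergence, and levelwise Fatou is correct (and somewhat more detailed than the paper's), and your strong-locality argument coincides with the paper's; but the weight of the proof does not sit in (RF2) — it sits in the separation/regularity step you passed over.
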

\begin{proof}
	First, we claim that $(\mcE,\mcF)$ is a resistance form on $\bigcup_{m\geq 0}\bar{V}_m$. (RF1) and (RF5) are obvious. Observing that by keeping doing the minimal energy extension, we can extend any $f\in \mcF^{(m)}$ to $f\in \mcF$ thanks to the upper bound estimate of the resistance metric in Lemma \ref{lemma54}. (RF2), (RF3) and (RF4) are then easy to shown with Lemma \ref{lemma52}. In addition, we see that 
	\[\tilde{R}(p,q)=R(p,q):=\sup_{f\in \mcF}\frac{|f(p)-f(q)|^2}{\mcE(f)},\quad \forall p,q\in \bigcup_{m\geq 0}\bar{V}_m.\]
	
	Next, to prove that $(\mcE,\mcF)$ is a resistance form on $\G$, we need to show that $\mcF$ separates points in $\G$, just like in Proposition \ref{prop23}. It suffices to prove that $\mcF$ is dense in $C(\G)$. Let $u\in C(\G)$, we fix $N$ large enough so that $|u(x)-u(y)|<\varepsilon$ if $x,y
	\in F_wK$ and $|w|\geq N$. We can apply Proposition \ref{prop33} to create $f\in \mcF$ such that $\|f-u\|_{L^\infty(\G)}< 2\varepsilon$. First we find $f_1\in \mcF^{(1)}$ such that 
	
	1. $\|f_1-u|_{\bar{V_1}}\|_{L^\infty}<\varepsilon$;
	
	2. $f_1(p)=u(p)$ for any $p\in \bigcup_{n=1}^N\bigcup_{w\in W_{1,n}} F_wV_0$. 
	
	\noindent Then we apply harmonic extension to $f_1$ on $\bar{V}_2\setminus \bigcup_{n=1}^N\bigcup_{w\in W_{1,n}} F_w\bar{V}_1$. On the cells $F_w\bar{V}_1$ with $|w|<N$, we apply a same construction to get $f_2$, but with $N-2$ replacing $N$ this time. After  $k=[N/2]+1$ times, we get $f_{k}\in \mcF^{(k)}$ such that $\|f_k-u|_{\bar{V}_k}\|_{L^\infty}<2\varepsilon$. Since all cells have size smaller than $\rho^N$, by doing harmonic extension, we get $f\in \mcF$ such that $\|f-u\|_{L^\infty(\G)}<2\varepsilon$. 
	Thus, $(\mcE,\mcF)$ is  regular resistance form on $\G$. 
	
	It remains to show that the form is strongly local. Let $f,g\in \mcF$ with $supp(f)\cap supp(g)=\emptyset$, then there exists $\varepsilon>0$ such that $d\big(supp(f),supp(g)\big)>\varepsilon$. Thus, we have $\mcD^{(n)}(f,g)=0$ for large $n$. By taking the limit, we see that $\mcE(f,g)=0$. Clearly $1\in \mcF$ with $\mcE(1)=0$, and it follows that the form is strongly local. 
\end{proof}

In the remaining of this section, we would like to characterize $(\mcE,\mcF)$ as the unique self-similar form associated with the infinite i.f.s. $\{F_w\}_{w\in W_1}$.

\begin{theorem}\label{thm56}
The resistance form $(\mcE,\mcF)$ satisfies the following properties:

(a). $\mcF\subset C(\G)$.

(b). For each $f\in \mcF$, we have $f\circ F_w\in\mcF$ for all $w\in W_1$, and in addition,  
\[\mcE(f)=\sum_{w\in W_1}\rho_w^{-\theta}\mcE(f\circ F_w).\]

(c). Reversely, let $f\in C(\G)$, if $f\circ F_w\in \mcF$ for all $w\in W_{1}$, and $\sum_{w\in W_1}\rho_w^{-
\theta}\mcE(f\circ F_w)<\infty$, then $f\in \mcF$. 

\noindent Moreover, $(\mcE,\mcF)$ (up to constants) and $\theta$ are uniquely determined by the above properties.
\end{theorem}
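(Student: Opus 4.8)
The plan is to establish each of the three self-similarity properties (a), (b), (c) first, and then treat the uniqueness claim as a separate (and harder) matter. Property (a) is already contained in Theorem~\ref{thm55}, since a regular resistance form on a compact set embeds into $C(\G)$; so I would only cite that. For (b), the natural route is to exploit the fact that $W_{m+1}=W_1\cdot W_m$, so that every word $w\in W_{m+1}$ factors uniquely as $w=\tau w'$ with $\tau\in W_1$, $w'\in W_m$. Grouping the sum defining $\mcD^{(m+1)}$ according to the leading block $\tau$ and using $r_{\tau w'}=r_\tau r_{w'}=\rho_\tau^\theta r_{w'}$, one sees that
\[
\mcD^{(m+1)}(f)=\sum_{\tau\in W_1}\rho_\tau^{-\theta}\sum_{w'\in W_m} r_{w'}^{-1}\mcD\big((f\circ F_\tau)\circ F_{w'}\big)=\sum_{\tau\in W_1}\rho_\tau^{-\theta}\,\mcD^{(m)}(f\circ F_\tau).
\]
Passing to the limit $m\to\infty$ on both sides, using monotone convergence on the right (each inner sequence is increasing) and the definition of $\mcE$ on the left, yields both the membership $f\circ F_\tau\in\mcF$ and the scaling identity in (b). I would need to check that $f\circ F_\tau\in C(\G)$, which is immediate from continuity of $f$ and $F_\tau$.

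For (c) I would run the same decomposition in reverse: given $f\in C(\G)$ with $f\circ F_w\in\mcF$ and $\sum_{w\in W_1}\rho_w^{-\theta}\mcE(f\circ F_w)<\infty$, the displayed rearrangement shows $\sup_m \mcD^{(m)}(f)=\sum_{\tau\in W_1}\rho_\tau^{-\theta}\mcE(f\circ F_\tau)<\infty$, whence $f\in\mcF$ by definition. The only subtlety is that $f$ must lie in $C(\G)$ and its restrictions to the $\bar V_m$ must be finite-energy, but this follows from the resistance-metric bound of Lemma~\ref{lemma54} together with $f\in C(\G)$. These three parts are essentially bookkeeping once the factorization $W_{m+1}=W_1\cdot W_m$ is in place.

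\textbf{The uniqueness claim is the main obstacle.} Suppose $(\mcE',\mcF')$ is another form satisfying (a)--(c) with some exponent $\theta'$. The strategy is to show first that $\theta'=\theta$ and then that $\mcE'=c\,\mcE$ for a constant $c$. Restricting (b) to the single cell $F_0$ and iterating, one sees that the form traces down to $V_0$ in a self-similar fashion governed by a renormalization equation of the type~(\ref{eqn41}); concretely, the trace $[\mcE']_{V_0}$ must satisfy $\mcR_{r'}[\mcE']_{V_0}=r'^2[\mcE']_{V_0}$ with $r'=\rho^{\theta'}$, because properties (b) and (c) force $\mcE'$ to be recovered from its own cell-restrictions with the weights $\rho_w^{-\theta'}$, which is exactly the self-similar form $\Psi_{r'}$ construction traced to $V_0$. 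By Theorem~\ref{thm46} the equation~(\ref{eqn42}) admits a solution for a \emph{unique} value of $r$, forcing $r'=r$ and hence $\theta'=\theta$. Once the exponent is pinned down, uniqueness of the boundary form up to scalar is Theorem~\ref{thm47}, and then uniqueness of $\mcE'$ itself follows because a self-similar resistance form is determined by its boundary trace $\mcD$ together with the scaling rule—i.e. $(\mcE,\mcF)$ is exactly the form reconstructed from $\mcD$ by Definition~\ref{def51}, so any $\mcE'$ obeying (a)--(c) with the same $\mcD$ and $\theta$ coincides with it.

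\textbf{The hard part will be} making rigorous the claim that properties (a)--(c) force the boundary trace to solve the renormalization fixed-point equation with $\lambda=r^2$, rather than merely some $\lambda$. The key point is that (c) is a genuine converse: it says the energy is \emph{exactly} the weighted sum of cell energies, with no room for an additional ``boundary'' contribution, and it is this rigidity that converts the self-similarity into the precise equation $\mcR_r\mcD=r^2\mcD$. I would verify this by taking $f$ to be the harmonic extension of a boundary function $u\in l(V_0)$ with respect to $\mcE'$ and checking, via the decimation-invariance already used in deriving (b), that minimizing over extensions reproduces the operator $\mcR_{r'}$; the constraint that the \emph{same} scaling factor $\rho_w^{-\theta'}$ appears at every level then forces $\lambda=r'^2$, which by Theorem~\ref{thm46} has a unique admissible $r'$. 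I would also need to confirm that a form satisfying (a)--(c) is automatically strongly local and regular so that Theorem~\ref{thm47}'s hypotheses apply, though this should follow from the same arguments as in Theorem~\ref{thm55}.
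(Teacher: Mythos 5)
Your proposal is correct and follows essentially the same route as the paper's own proof: properties (a)--(c) are read off from the construction via the factorization $W_{m+1}=W_1\cdot W_m$ and monotone convergence, and uniqueness is obtained by tracing $\mcE'$ to $V_0$, using the harmonic extension with respect to $\mcE'$ in one direction and the two-step (minimizer of $\Psi_{r'}\mcD'$, then cell-wise harmonic, admissible by property (c)) extension in the other to derive $\mcR_{r'}\mcD'={r'}^2\mcD'$, after which Theorem \ref{thm40} (equivalently Theorems \ref{thm46} and \ref{thm47}) forces $\theta'=\theta$ and $\mcD'=\mcD$, and the form is reconstructed level by level as in Definition \ref{def51}. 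The paper's proof is no more detailed than yours at the two places you flag as delicate (the exact two-inequality derivation of the fixed-point equation, and the final reconstruction step $\mcE'=\mcE$), so your sketch matches it in both substance and rigor.
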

\begin{proof}
The claimed properties of $(\mcE,\mcF)$ are immediate consequences of the construction. 

The uniqueness follows by a well-known argument, but in the infinite graph version. Let $(\mcE',\mcF')$ be another form satisfying the above properties with $\theta'$ replacing $\theta$. Define $\mcD'$ to be the trace of $\mcE'$ onto $V_0$, and write $r'_w=\rho_w^{\theta'}$, $r'=\rho^{\theta'}$. For any $u\in l(V_0)$, let $h_u$ be the harmonic extension of $u$ to $\mcF'$, then we can see that  
\[\mcD'(u)=\mcE'(h_u)=\sum_{w\in W_1} {r'}_w^{-1}\mcE'(h_u\circ F_w)\geq \sum_{w\in W_1}{r'}_w^{-1}\mcD'\big((h_u\circ F_w)|_{V_0}\big)\geq{r'}^{-2}\mcR_{r'}\mcD'(u),\]
where $\mcR_{r'}$ is the renormalization map introduced in Definition \ref{def31}, and we use properties (a) and (b) in the inequalities.

On the other hand, we can do the harmonic extension of $u$ in two steps: first we extend $u$ to $f_1\in C(\bar{V}_1)$ so that $f_1$ minimizes $\Psi_{r'}\mcD'$, then we take harmonic extension of $f_1$ on each cell $F_w\G,w\in W_1$, to $f\in C(\G)$, by using property (a) and the Markov property (RF5). In addition, $f\in \mcF'$ by the property (c). Then, by property (b),
\[{r'}^{-2}\mcR_{r'}\mcD'(u)={r'}^{-2}\Psi_{r'}\mcD'(f_1)=\sum_{w\in W_1}{r'}_w^{-1}\mcE'(f\circ F_w)=\mcE'(f)\geq \mcD'(u).\]

Thus, we get $\mcR_{r'}\mcD'={r'}^2\mcD'$, which implies that $\mcD'=\mcD$ and $\theta'=\theta$ by Theorem \ref{thm40}. Finally, by a similar argument, one can easily find that the restriction of $\mcE'$ to $\bar{V}_m$ is $\mcD^{(m)}$, and the claim that $\mcE'=\mcE$ follows immediately by taking the limit. 
\end{proof}
 
Finally, the form $(\mcE,\mcF)$ is decimation invariant with respect to the graph-directed construction in Definition \ref{def12}.

\begin{definition}\label{def57}
	Take the same notations as in Definition \ref{def12}. Let $(\mcE_1,\mcF_1)=(\mcE,\mcF)$, and define $(\mcE_2,\mcF_2)$ as follows,
	\[\begin{cases}
	\mcE_2(f,g)=\sum_{w\in W_1,F_w\G\subset K_2} \rho_w^{-\theta}\mcE(f\circ F_w, g\circ F_w),\\
	\mcF_2=\{f\in C(K_2): \text{ }f\circ F_w\in \mcF, \forall w\in W_1 \text{ such that }F_w\G\subset K_2, \text{ }\mcE_2(f)<\infty\}. 
	\end{cases}\]
\end{definition}

It is not hard to verify that $(\mcE_2,\mcF_2)$ is a resistance form on $K_2$. Moreover, we have

\begin{theorem}\label{thm58}
	Take the same notations as in Definition \ref{def12}, and write $\rho_{e_j}$ for the similarity ratio of $\psi_{e_j}$, $1\leq j\leq 6$. Let $(\mcE_i,\mcF_i),i=1,2$ be defined as in Definition \ref{def57}. Then, for $f_i\in \mcF_i$, $i=1,2$, we have $f_{e_{j,1}}\circ \psi_{e_j}\in \mcF_{e_{j,2}}$ for $1\leq j\leq 6$ and
	\[\mcE_1(f_1)=\sum_{j=1}^2\rho^{-\theta}_{e_j}\mcE_{e_{j,2}}(f_1\circ \psi_{e_j}),\quad \mcE_2(f_2)=\sum_{j=3}^6\rho^{-\theta}_{e_j}\mcE_{e_{j,2}}(f_2\circ \psi_{e_j}).\]
	Reversely, let $f_1\in C(K_1)$, if $f_1\circ \psi_{e_j}\in \mcF_{e_{j,2}}$ for $j=1,2$, then $f_1\in \mcF_1$. The same holds for $(\mcE_2,\mcF_2)$.
\end{theorem}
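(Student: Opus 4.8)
The plan is to exploit the fact that the graph-directed construction in Definition \ref{def12} simply regroups the cells $\{F_w\G\}_{w\in W_1}$ according to whether they lie in $K_1=\G$ or $K_2=\overline{\G\setminus F_{22}\G}$, and that the energy $\mcE$ is already expressed as a sum over these cells in Theorem \ref{thm56}(b). The key identity is that the self-similar energy decomposition of $\mcE_i$ over $W_1$-cells can be reorganized through the maps $\psi_{e_j}$. Concretely, I would first verify at the level of words: every $w\in W_1$ with $F_w\G\subset K_1$ factors uniquely as $\psi_{e_j}\circ F_v$ for some $v\in W_1$ and $j\in\{1,2\}$ (namely $\psi_{e_1}=Id$ handles the cells in $K_2\subset K_1$ and $\psi_{e_2}=F_{22}$ handles $F_{22}\G$), with $\rho_w=\rho_{e_j}\rho_v$; and similarly every cell in $K_2$ factors through $\psi_{e_3},\dots,\psi_{e_6}$. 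This combinatorial bijection is the backbone of the argument.

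Next I would carry out the forward direction. Starting from Theorem \ref{thm56}(b), I would split the sum $\mcE_1(f_1)=\sum_{w\in W_1}\rho_w^{-\theta}\mcE(f_1\circ F_w)$ into the part lying in $K_2$ (edge $e_1$) and the part $F_{22}\G$ (edge $e_2$). For the $e_2$ part, using $\rho_w=\rho_{e_2}\rho_v$ and the multiplicativity $\rho_w^{-\theta}=\rho_{e_2}^{-\theta}\rho_v^{-\theta}$, the inner sum reassembles into $\mcE(f_1\circ\psi_{e_2})$ by Theorem \ref{thm56}(b) applied again, giving $\rho_{e_2}^{-\theta}\mcE_{e_{2,2}}(f_1\circ\psi_{e_2})$ with $e_{2,2}=1$. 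For the $e_1$ part, the cells in $K_2$ reassemble directly into $\mcE_2(f_1\circ\psi_{e_1})=\mcE_2(f_1)$ by the very definition of $\mcE_2$ in Definition \ref{def57}, with $e_{1,2}=2$. The same bookkeeping applied to $\mcE_2(f_2)=\sum_{w\in W_1,\,F_w\G\subset K_2}\rho_w^{-\theta}\mcE(f_2\circ F_w)$, grouping the four families of cells under $\psi_{e_3},\dots,\psi_{e_6}$, yields the second identity; here I would use that $K_2=\bigcup_{i=3}^6\psi_{e_i}K_{e_{i,2}}$ with the unions almost disjoint, so that the regrouping is exact. Membership $f_{e_{j,1}}\circ\psi_{e_j}\in\mcF_{e_{j,2}}$ is read off from the finiteness of the corresponding energy sum together with $f_{e_{j,1}}\circ\psi_{e_j}\in C(K_{e_{j,2}})$, which follows from continuity of $f_i$ and of $\psi_{e_j}$.

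For the reverse direction, I would argue as in Theorem \ref{thm56}(c): given $f_1\in C(K_1)$ with $f_1\circ\psi_{e_j}\in\mcF_{e_{j,2}}$ for $j=1,2$, I reconstruct the full energy sum over $W_1$ by running the word-factorization in the opposite direction. Each $\mcE_{e_{j,2}}(f_1\circ\psi_{e_j})$ is itself a convergent sum over cells (by the self-similarity of $\mcE$ and $\mcE_2$), and reassembling the two pieces recovers $\sum_{w\in W_1}\rho_w^{-\theta}\mcE(f_1\circ F_w)<\infty$. Since $f_1\in C(\G)$, Theorem \ref{thm56}(c) then gives $f_1\in\mcF=\mcF_1$. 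The statement for $(\mcE_2,\mcF_2)$ is identical in structure.

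The main obstacle I anticipate is not any single estimate but the careful verification of the word-level factorization and, in particular, the \emph{disjointness} of the regrouping, i.e. that the maps $\psi_{e_j}$ partition the cells $\{F_w\G\}_{w\in W_1}$ without double-counting on the overlaps. Because $\G$ has genuine overlaps ($F_{122}\G=F_{211}\G$) and the unions in Definition \ref{def12} are only almost non-overlapping, I would need to confirm that the overlaps are confined to $V_0$-type boundary sets of measure/capacity zero and thus do not affect the energy sums—this is precisely where the almost-disjoint open sets $O_1,O_2$ from Definition \ref{def12} enter. Once the bijection between $W_1$-cells and the $(e_j,v)$-pairs is established cleanly, the rest is a rearrangement of absolutely convergent series and an application of Theorem \ref{thm56}, so the analytic content is light and the combinatorial bookkeeping is the real work.
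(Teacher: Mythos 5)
The paper gives no proof of Theorem \ref{thm58} at all: it is stated right after Definition \ref{def57} as an evident consequence of the construction, so there is no written argument to compare yours against; the only question is whether your proposal is correct, and it is. Your regrouping is exactly the intended mechanism: split $W_1$ (modulo the identification $F_{122}=F_{211}$) into the words beginning with $22$, which are in bijection with $W_1$ via $v\mapsto 22v$ with $\rho_{22v}=\rho^2\rho_v$, and the remaining words, whose cells lie in $K_2$ and whose partial sum is, by Definition \ref{def57}, precisely $\mcE_2(f_1)$; Theorem \ref{thm56}(b)(c) then reassembles the inner sums into $\mcE(f_1\circ F_{22})$, $\mcE_2(f_2\circ F_1)$, $\mcE_2(f_2\circ F_{21})$ (the $\psi_{e_3}$ and $\psi_{e_6}$ families are single cells, so those terms need no resummation), and the reverse direction runs the bijection backwards and invokes Theorem \ref{thm56}(c). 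One refinement: your worry about overlaps of measure or capacity zero, and about the open sets $O_1,O_2$, is a red herring here. The energies are sums over the discrete index set $W_1$, not integrals, so nothing is "negligible"; what you must verify is that the word classes partition $W_1$ exactly, and this reduces to three word-level facts: (i) the relation $122=211$ preserves the property of beginning with $22$, so no cell lies in two families; (ii) the similarities are injective, so left cancellation gives uniqueness of the factorization and resolves the genuine ambiguity $F_1F_{22v}=F_{21}F_{1v}$ consistently (such a cell belongs to the $\psi_{e_5}$ family, since $F_{22v}\G\not\subset K_2$); (iii) a cell $F_{22v}\G$ is \emph{not} contained in $K_2=\overline{\G\setminus F_{22}\G}$, which follows because such a cell is bounded away from the bottom line, hence only finitely many cells outside $F_{22}\G$ approach it, and distinct cells meet in finitely many points. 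With (i)--(iii) made explicit, your argument is a complete and correct proof.
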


\noindent{\textbf{Remark.} At the end of this section, we remark that a same construction can be applied to get some non-standard self-similar forms on $\G$ with respect to the infinite i.f.s. $\{F_w\}_{w\in W_1}$, by starting with any solution $R_{r'}\mcD'=\lambda(r')\mcD'$. Theorem \ref{thm55} and \ref{thm58} still hold for the forms, with slight changes of the renormalization factors. Nevertheless, the good heat kernel estimate (Theorem \ref{thm64} below) will not hold, but it is possible to get a heat kernel estimate in the form of Hambly and Kumagai's on p.c.f. self-similar sets \cite{HK}. 

\section{Transition density estimate}
Let $\mu_H$ be the normalized Hausdorff measure on $\G$. $(\mcE,\mcF)$ becomes a local regular Dirichlet form on $L^2(\G,\mu_H)$ ($L^2(\G)$ for short) in a standard way. By the celebrated result \cite{FOT}, there is a Hunt process $X=(\mathbb{P}^x,x\in\G,X_t,t\geq 0)$ associated with $(\mcE,\mcF)$ such that 
\[\mathbb{E}^x[f(X_t)]=P_tf(x),\quad \text{ a.e. } x\in\G,\]
where $P_t$ is the associated hear operator. In this last section, we will show that $X$ is a fractional diffusion. We refer to Barlow's book \cite{B}, Section 3, for the definition of this fractional diffusion. 

\begin{definition}\label{def61}
 A Markov process $X=(\mathbb{P}^x,x\in\G,X_t,t\geq 0)$ is a \emph{fractional diffusion} on the fractional metric space $(\G,d_g,\mu_H)$ (see Section 2)  if
	
	(a). $X$ is a conservative Feller diffusion with state space $\G$;
	
	(b). $X$ is $\mu_H$-symmetric;
	
	(c). $X$ has a symmetric transition density $p(t,x,y)=p(t,y,x)$, $t>0$, $x,y\in \G$, which satisfies the Chapman-Kolmogorov equations and is jointly continuous for $t>0$;
	
	(d). there exist a constant $\beta$ and $c_1$-$c_4>0$, such that for $0<t\leq 1$, 
	\[c_1t^{-d_H/\beta}\exp\big(-c_2(\frac{d_g(x,y)^\beta}{t})^{\frac{1}{\beta-1}}\big)\leq p(t,x,y)\leq c_3t^{-d_H/\beta}\exp\big(-c_4(\frac{d_g(x,y)^\beta}{t})^{\frac{1}{\beta-1}}\big),\]
	where $d_H$ is the Hausdorff dimension of $\G$.
\end{definition}

Since $d_g\asymp d$ by Lemma \ref{lemma15}, it suffices to consider the Eculidean metric $d$ in the following.

We will closely follow Barlow's book \cite{B} and Hambly and Kumagai's paper \cite{HK}. We only provide some essential estimates, including a Nash inequality and an estimate of the resistance metric $R$. 

For convenience, for $0<s<1$, we write $\tilde{W}_s=\{w\in \tilde{W}_*:\rho_w\leq s< \rho_{([w]_{|w|-1})}\}$, and by identifying words representing the same cells, we get a quotient class $\hat{W}_s$.  

\begin{proposition}[Nash inequality]\label{prop62}
	Let $d_S=\frac{2d_H}{d_H+\theta}$ with $\theta=\frac{\log r}{\log \rho}$, and $f\in \mcF$, we have 
	\[\|f\|_{L^2(\G)}^{2+4/d_S}\leq c\big(\mcE(f)+\|f\|_{L^2(\G)}^2\big)\|f\|_{L^1(\G)}^{4/d_S},\]
	for some constant $c>0$ independent of $f$.
\end{proposition}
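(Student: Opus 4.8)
The plan is to deduce the Nash inequality from two geometric inputs already in hand: the Ahlfors regularity $\mu_H(B_s(p))\asymp s^{d_H}$ of Lemma \ref{lemma14}, and the resistance upper bound $R(p,q)\le cd(p,q)^\theta$ of Lemmas \ref{lemma53} and \ref{lemma54}. I would fix a scale $0<s\le 1$ and partition $\G$ (up to the $\mu_H$-null bottom line) into cells $F_w\G$ of diameter comparable to $s$; the finite type property guarantees that such a partition into cells of comparable size exists, with each cell satisfying $\mu_H(F_w\G)\asymp s^{d_H}$ and having resistance diameter $\asymp s^\theta$. On each cell let $\bar f_w$ denote the $\mu_H$-average of $f$, and set $f_s=\sum_w \bar f_w\mathbf 1_{F_w\G}$.

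First I would estimate $\|f-f_s\|_{L^2(\G)}$. Since $|f(x)-f(y)|^2\le R(x,y)\mcE_{F_w}(f)$, where $\mcE_{F_w}(f)$ is the energy carried by the cell $F_w\G$, the resistance-diameter bound gives $\mathrm{osc}_{F_w\G}(f)^2\le cs^\theta\mcE_{F_w}(f)$. Multiplying by $\mu_H(F_w\G)\asymp s^{d_H}$, summing over the partition, and using that the cell energies add up to at most $\mcE(f)$ (by strong locality together with the self-similar and decimation identities of Theorems \ref{thm56} and \ref{thm58}), I obtain
\[\|f-f_s\|_{L^2(\G)}^2\le c_1 s^{\theta+d_H}\mcE(f)=c_1 s^\beta\mcE(f),\quad\beta=\theta+d_H.\]
On the other hand, $\|f_s\|_{L^2(\G)}^2=\sum_w\mu_H(F_w\G)\bar f_w^2\le c_2 s^{-d_H}\|f\|_{L^1(\G)}^2$, using $\mu_H(F_w\G)\gtrsim s^{d_H}$ together with $\sum_w\big(\int_{F_w\G}|f|\,d\mu_H\big)^2\le\|f\|_{L^1(\G)}^2$. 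Combining the two bounds, for every $0<s\le 1$,
\[\|f\|_{L^2(\G)}^2\le c_1 s^\beta\mcE(f)+c_2 s^{-d_H}\|f\|_{L^1(\G)}^2.\]

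It then remains to optimize in $s$. The unconstrained minimizer of the right-hand side is $s_0\asymp\big(\|f\|_{L^1(\G)}^2/\mcE(f)\big)^{1/(\beta+d_H)}$. When $s_0\le 1$, taking $s=s_0$ yields the homogeneous bound $\|f\|_{L^2(\G)}^{2(\beta+d_H)/d_H}\le c\,\mcE(f)\|f\|_{L^1(\G)}^{2\beta/d_H}$, which is exactly the claim once one checks $2+4/d_S=2(\beta+d_H)/d_H$ and $4/d_S=2\beta/d_H$ for $d_S=2d_H/\beta$. When $s_0>1$ --- the near-constant regime in which $\mcE(f)\lesssim\|f\|_{L^1(\G)}^2$ --- I would instead set $s=1$ and invoke $\|f\|_{L^1(\G)}\le\|f\|_{L^2(\G)}$ (as $\mu_H(\G)=1$); the lower-order term $\|f\|_{L^2(\G)}^2$ on the right then absorbs the loss and again yields $\|f\|_{L^2(\G)}^{2(\beta+d_H)/d_H}\le c\big(\mcE(f)+\|f\|_{L^2(\G)}^2\big)\|f\|_{L^1(\G)}^{2\beta/d_H}$. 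Taking the larger of the two constants completes the proof; the correction term $\|f\|_{L^2(\G)}^2$ is unavoidable on the compact space $\G$, precisely because the optimal scale may exceed the diameter.

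The step demanding the most care is the oscillation estimate, which requires the scale-$s$ cells to carry simultaneously a comparable measure, a comparable resistance diameter, and total energy no larger than $\mcE(f)$. The measure and resistance comparabilities are supplied by Lemma \ref{lemma14} and Lemmas \ref{lemma53}--\ref{lemma54}, and it is here that the finite type property is essential: one must partition using the ambient i.f.s. $\{F_0,F_1,F_2\}$, whose contraction ratios are bounded below, rather than the infinite i.f.s. $\{F_w\}_{w\in W_1}$, whose ratios are not. The energy super-additivity is the genuinely delicate bookkeeping: one realizes $\mcE$ through its energy measure and uses that the cells are essentially disjoint, thereby matching the geometric scale-$s$ partition with the self-similar energy decomposition of Theorems \ref{thm56} and \ref{thm58}. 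Once these are reconciled, the averaging and the optimization above are routine.
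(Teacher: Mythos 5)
Your overall skeleton --- decompose at scale $s$ into cells of comparable diameter, control the $L^2$ norm by an oscillation term (resistance bound $R\le c\,d^\theta$ times Ahlfors regularity) plus an averaged term, obtain $\|f\|_{L^2}^2\le c_1s^{\theta+d_H}\mcE(f)+c_2s^{-d_H}\|f\|_{L^1}^2$, and optimize over $s\le 1$ with the near-constant regime absorbed by the $\|f\|_{L^2}^2$ correction --- is exactly the paper's proof; the paper merely packages your average-plus-oscillation step as a unit-scale inequality $\|g\|_{L^2}^2\le c\big(\mcE(g)+\|g\|_{L^1}^2\big)$ applied to the pullbacks $g=f\circ F_w$, $w\in\hat W_s$. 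Your exponent bookkeeping and case analysis are correct and identical to the paper's.

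However, the mechanism you invoke at the step you yourself flag as critical is false on this fractal. You assert that the scale-$s$ ambient cells form a partition, ``essentially disjoint'' up to the null bottom line, and you rest the energy superadditivity on this via energy measures. But the defining feature of the golden ratio gasket is that comparable-size ambient cells overlap on sets of \emph{positive} $\mu_H$-measure: $F_1\G\cap F_2\G\supset F_{122}\G=F_{211}\G$, and such overlaps recur at every scale. Indeed no family of ambient cells $F_w\G$ of comparable size can tile $\G$ essentially disjointly --- removing the overlap from $F_2\G$ leaves a copy of $K_2=\overline{\G\setminus F_{22}\G}$, which is not an image of $\G$. So the finite type property yields only a cover $\{F_w\G\}_{w\in\hat W_s}$ of \emph{bounded multiplicity}, never a partition, and ``cell energies add up to at most $\mcE(f)$'' cannot be obtained the way you describe. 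A second, related issue: your oscillation bound needs the localized cell energy in pullback form, $\mathrm{osc}_{F_w\G}(f)^2\le R^*\,\mcE(f\circ F_w)\le c\,s^\theta\cdot\rho_w^{-\theta}\mcE(f\circ F_w)$ (via Lemma \ref{lemma54} applied to $f\circ F_w$); relating the oscillation over $F_w\G$ to the energy \emph{measure} of that cell is not something the paper's results give you. Both defects are repaired simultaneously by what the paper actually uses: bounded multiplicity of $\hat W_s$ plus the superadditivity \emph{with a constant}, $\sum_{w\in\hat W_s}\rho_w^{-\theta}\mcE(f\circ F_w)\le c'\mcE(f)$ with $c'\ge 1$ (a consequence of Lemma \ref{lemma53} and the self-similar identity of Theorem \ref{thm56}); a fixed multiplicative loss is harmless for the Nash inequality, and the same multiplicity constant handles the double counting in your $L^1$ estimate (where $f_s$ should anyway be defined through a measurable partition subordinate to the cover). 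Alternatively, one could work with the graph-directed pieces of Definition \ref{def12}, which \emph{are} essentially disjoint, at the price that the pieces are copies of $K_1$ and $K_2$ rather than of $\G$, using Theorem \ref{thm58} for the energy identity.
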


\begin{proof}
	The proof is essentially the same as that for the p.c.f. self-similar sets \cite{HK}. We reproduce it here for convenience of readers. Write $f_w=f\circ F_w$ for $w\in\tilde W_*$ for short. Then for $0<s<1$,
	\[\begin{aligned}
	\|f\|^2_{L^2(\G)}&\leq \sum_{w\in \hat{W}_s}\rho_w^{d_H}\|f_w\|^2_{L^2(\G)}\leq c_1\sum_{w\in \hat{W}_s}\rho_w^{d_H}\big(\mcE(f_w)+\|f_w\|^2_{L^1(\G)}\big)\\
	&\leq c_2s^{d_H+\theta}\sum_{w\in \hat{W}_s}\rho_w^{-\theta}\mcE(f_w)+c_3s^{-d_H}\sum_{w\in \hat{W}_s}(\rho_w^{d_H}\|f_w\|_{L^1(\G)})^2\\
	&\leq c_4\big(s^{d_H+\theta}\mcE(f)+s^{-d_H}\|f\|_{L^1(\G)}^2\big),
	\end{aligned}\]
	where in the last inequality, we use the observation that $\sum_{w\in \hat{W}_s}\rho_w^{-\theta}\mcE(f_w)\leq c'\mcE(f)$ for some $c'\geq 1$. In the case that $\mcE(f)>\|f\|^2_{L^1(\G)}$, we choose $s$ such that $s^{2d_H+\theta}\mcE(f)=\|f\|^2_{L^1(\G)}$, then the result follows immediately. In the case that $\mcE(f)\leq \|f\|^2_{L^1(\G)}$, we have $\|f\|^2_{L^2(\G)}\leq c_1(\mcE(f)+\|f\|_{L^1(\G)}^2)\leq 2c_1\|f\|^2_{L^1(\G)}$, and the result follows. 
\end{proof}

The Nash inequality provides an upper bound estimate $p(t,x,y)\leq c_1t^{-d_S/2}$. In addition, $|p(t,x,y)-p(t,x,y')|\leq c_2t^{-1-d_S/2}R(y,y'), \forall 0<t\leq 1, x,y,y'\in\G$. See \cite{CKS} for a proof.

\begin{proposition}\label{prop63}
	Let $R(\cdot,\cdot)$ be the resistance metric associated with $(\mcE,\mcF)$ on $\G$. Then there exist $c_1, c_2>0$ such that
	\[c_1d(p,q)^\theta\leq R(p,q)\leq c_2d(p,q)^\theta,\quad \forall p,q\in\G.\]
	In addition, for $p\in\G$ and $A\subset \G$, define $R(p,A)=\sup\{\mcE(f)^{-1}:f\in \mcF, f(p)=1,f|_A=0\}$. Then there exists $c_3,c_4>0$ such that
	\[c_3s^\theta\leq R\big(p,B^c_s(p)\big)\leq c_4s^\theta,\]
	where $B_s(p)=\{q\in \G:d(p,q)<s\}$ with $p\in\G$ and $0<s<1$, and $B_s^c(p)$ is the complement of $B_s(p)$ in $\G$.
\end{proposition}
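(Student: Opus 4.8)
The plan is to split the four inequalities into the two upper bounds, which follow quickly from results already in hand, and the two lower bounds, which carry the real content and which I would obtain from a single cutoff construction at scale $s$. For the upper bound $R(p,q)\le c_2 d(p,q)^\theta$, I would first recall from Lemma \ref{lemma54}, together with the identification $\tilde R=R$ on $\bigcup_{m\ge 0}\bar V_m$ noted in the proof of Theorem \ref{thm55}, that the estimate holds on the dense set $\bigcup_{m\ge 0}\bar V_m$. Since this bound forces $d$-convergence to imply $R$-convergence, and the resistance metric is continuous in its own topology, I would pass to the limit along $p_n\to p$, $q_n\to q$ in $\bigcup_{m\ge 0}\bar V_m$ to extend it to all of $\G$. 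The upper bound for the point-to-set resistance is then essentially free: by the shorting (monotonicity) principle one has $R\big(p,B^c_s(p)\big)\le R(p,q)$ for any $q$ with $d(p,q)\ge s$, and since $\G$ is connected of diameter one, I may pick $q$ on the sphere $\{d(p,\cdot)=s\}$ to conclude $R\big(p,B^c_s(p)\big)\le c_2 s^\theta$.

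The crux is the lower bounds, which I would deduce from the construction of a cutoff function $f$ at scale $s$ around $p$: a function with $f(p)=1$, $f\equiv 0$ on $B^c_{Ks}(p)$ for a fixed geometric constant $K$, taking values in $[0,1]$, and satisfying $\mcE(f)\le c\,s^{-\theta}$. Granting such an $f$, both lower bounds follow at once. Indeed $1/\mcE(f)\le R\big(p,B^c_{Ks}(p)\big)$ gives $R\big(p,B^c_{Ks}(p)\big)\ge c^{-1}s^\theta$, and replacing $s$ by $s/K$ (which only alters constants) yields $R\big(p,B^c_s(p)\big)\ge c_3 s^\theta$; moreover, applying the construction with $s=d(p,q)/K$ makes $f(q)=0$, so that $R(p,q)\ge |f(p)-f(q)|^2/\mcE(f)=1/\mcE(f)\ge c_1 d(p,q)^\theta$.

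To build $f$ I would work with the cells $F_w\G$ of diameter comparable to $s$, i.e.\ the words $w\in\hat W_s$. Using the finite type property (Definition \ref{lemma11}), volume doubling (Lemma \ref{lemma14}) and the equivalence of the geodesic and Euclidean metrics (Lemma \ref{lemma15}), I would select, inside the annulus $\{s\le d(p,\cdot)\le Ks\}$, a separating \emph{ring} of at most $M$ such cells, the bound $M$ being uniform by volume doubling, and label each size-$s$ cell by $0$ or $1$ according to whether it lies outside or inside the ring, so that $f(p)=1$, $f\equiv 0$ off $B_{Ks}(p)$, and no cell labelled $1$ touches one labelled $0$. On each ring cell $F_w\G$ I would take $f\circ F_w$ to be the minimal energy extension of its prescribed $\{0,1\}$ boundary data on $V_0$; since there are only finitely many boundary patterns, the energies $\mcE(f\circ F_w)=\mcD\big((f\circ F_w)|_{V_0}\big)$ are uniformly bounded. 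On the inner and outer regions $f$ is constant and contributes no energy even though these regions may contain infinitely many small cells. By the self-similar decomposition of Theorem \ref{thm56}, in the comparison form used in Proposition \ref{prop62}, the total energy is then
\[
\mcE(f)\le c\sum_{\text{ring cells } F_w\G}\rho_w^{-\theta}\mcE(f\circ F_w)\le M\,c\,s^{-\theta},
\]
as required.

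The main obstacle is precisely this geometric step. Because $\G$ carries the large overlap $F_1\G\cap F_2\G$ and is not finitely ramified, verifying that a ring of uniformly boundedly many size-$s$ cells genuinely separates $p$ from $B^c_{Ks}(p)$, and that the $\{0,1\}$ labels can be assigned consistently so that $f$ is continuous and lies in $\mcF$, requires a careful use of the finite type property to control how cells of comparable size meet along the bottom line. Once this separation with consistent labels is secured, the energy bound, and hence all four inequalities, follow as above.
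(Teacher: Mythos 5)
Your overall reduction agrees with the paper's: the upper bound $R(p,q)\leq c_2d(p,q)^\theta$ is indeed already available from Lemma \ref{lemma54} and Theorem \ref{thm55}, the bound $R\big(p,B_s^c(p)\big)\leq c_4s^\theta$ follows from it by monotonicity, and both lower bounds reduce to producing a scale-$s$ cutoff function with $\mcE(f)\leq cs^{-\theta}$. The gap is precisely at the step you yourself flag as ``the main obstacle'' and then leave unresolved: your explicit construction of that cutoff fails as stated. You define $f$ on each ring cell $F_w\G$, $w\in\hat{W}_s$, as the minimal-energy extension of $\{0,1\}$ data prescribed on the three vertices $F_wV_0$. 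This presupposes exactly the finitely ramified cell structure that $\G$ lacks: two cells of comparable size can intersect in an entire sub-cell rather than in finitely many points (for instance $F_1\G\cap F_2\G=F_{122}\G$, a copy of $\G$ of diameter $\rho^3$), and a ring cell can meet the region where you set $f\equiv 1$ or $f\equiv 0$ along such a set. Consequently (i) the harmonic extensions on two overlapping ring cells need not agree on their overlap, so $f$ is not well defined, let alone continuous or in $\mcF$; (ii) the relevant ``boundary data'' of a ring cell is a function on a large subset of the cell, not a $\{0,1\}$-vector on $V_0$, so the claim that there are finitely many patterns with uniformly bounded energies, and the identity $\mcE(f\circ F_w)=\mcD\big((f\circ F_w)|_{V_0}\big)$, are unjustified; and (iii) your final estimate uses $\mcE(f)\leq c\sum_{w}\rho_w^{-\theta}\mcE(f\circ F_w)$ over the overlapping family $\hat{W}_s$, which is the \emph{reverse} of the inequality $\sum_{w\in\hat{W}_s}\rho_w^{-\theta}\mcE(f_w)\leq c'\mcE(f)$ used in Proposition \ref{prop62}, and it too requires proof.

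The paper avoids all of this by never constructing the cutoff explicitly. It sets $U_{p,s,0}$ to be the union of the cells in $\hat{W}_{s\rho^2}$ containing $p$ and $U_{p,s,1}$ its cell neighborhood, and obtains some $f_{p,s}\in\mcF$ with $f_{p,s}|_{U_{p,s,0}}=1$ and $f_{p,s}|_{\mathring{U}_{p,s,1}^c}=0$ directly from the regularity of $(\mcE,\mcF)$ established in Theorem \ref{thm55}. The finite type property (Definition \ref{lemma11}) then guarantees that, up to affine maps $\psi$, only finitely many configurations $(U_{p,s,0},U_{p,s,1})$ occur, so one may take $f_{p,s}=f_{p_i,s_i}\circ\psi$ and use the exact energy scaling $\mcE(f_{p,s})=\rho_\psi^{-\theta}\mcE(f_{p_i,s_i})$ to get the uniform bound $\mcE(f_{p,s})\leq c^{-1}s^{-\theta}$; this is where the overlap difficulty is absorbed. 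To salvage your approach you would have to prescribe data on the full cell intersections, prove consistency of the cell-by-cell extensions, and establish the reverse energy decomposition over $\hat{W}_s$; the regularity-plus-finite-type transport argument is the cleaner route and is the one the paper takes.
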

\begin{proof}
	We already have the estimate $R(p,q)\leq c_2d(p,q)^\theta$ in Lemma \ref{lemma54} and Theorem \ref{thm55}. Now we show $R\big(p,B^c_s(p)\big)\geq c_3s^\theta$ for $p\in\G$ and $0<s<1$. 
	
	Define 
	\[
	\begin{aligned}
	&U_{p,s,0}=\bigcup_{w\in \hat{W}_{p,s,0}} F_w\G \text{ with }\hat{W}_{p,s,0}=\{w\in \hat{W}_{s\rho^2}:p\in F_w\G\},\\ &U_{p,s,1}=\bigcup_{w\in \hat{W}_{p,s,1}} F_w\G \text{ with }\hat{W}_{p,s,1}=\{w\in \hat{W}_{s\rho^2}:F_w\G\cap U_{p,s,0}\neq \emptyset\}.
	\end{aligned}
	\]
	Clearly, we have $U_{p,s,0}\subset \mathring{U}_{p,s,1}\subset U_{p,s,1}\subset B_{s}(p)$. Since $(\mcE,\mcF)$ is regular, there exists $f_{p,s}\in \mcF$ so that $f_{p,s}|_{\mathring{U}^c_{p,s,1}}=0$ and $f_{p,s}|_{U_{p,s,0}}=1$. 
	
	As $\G$ satisfies the finite type property, there exists a finite class $\{(p_i,s_i)\}_{i=1}^N$ such that for any $p\in \G$ and $0<s<1$, there exists $1\leq i\leq N$ and an affine map $\psi$ such that $\psi:U_{p,s,l}\to U_{p_i,s_i,l}$ for $l=0,1$, which maps cells corresponding to $\hat W_{p,s,l}$ to those corresponding to $\hat W_{p_i,s_i,l}$. In addition, we can require that $\psi$ maps the boundary of $U_{p,s,l}$ to the boundary of $U_{p_i,s_i,l}$, which only depend on how the outside cells of approximately same size intersect $U_{p_i,s_i,1}$. Thus, we can assume that    
	\[f_{p,s}(q)=\begin{cases}
	f_{p_i,s_i}\circ\psi (q),&\text{ if }q\in U_{p,s,1},\\
	0,&\text{ if }q\in U^c_{p,s,1}.
	\end{cases}\]
	By a similar observation as in Lemma \ref{lemma54}, there exists $m\in \mathbb{Z}$ such that
	\[\mcD^{(n)}(f_{p_i,s_i})\leq \rho_\psi^{\theta}\mcD^{(n+m)}(f_{p,s}),\]
	where $\rho_\psi$ is the similarity ratio of $\psi$. So we have $\mcE(f_{p,s})=\rho_{\psi}^{-\theta}\mcE(f_{p_i,s_i})\leq c_3^{-1}s^{-\theta}$ for some constant $c_3$ independent of $p,s,i$. Since $f_{p,s}|_{B_s^c(p)}=0$ and $f_{p,s}(p)=1$, we get the estimate $R(p,B_s^c(p))\geq c_3s^\theta$.  
	
	Finally, the estimates $R(p,q)\geq c_1d(p,q)^\theta$ follows from the fact that $R(p,B_s^c(p))\geq c_3s^\theta$, and $R\big(p,B_s^c(p)\big)\leq c_4s^\theta$ follows from the fact that $R(p,q)\leq c_2d(p,q)^\theta$.
\end{proof}

By the resistance metric estimate in Proposition \ref{prop63}, the Ahlfors regularity of the measure $\mu_H$ and the resulted estimates from the Nash inequality, there exist a lower bound estimate $p(t,x,y)\geq c_3t^{-d_S/2}$ and an estimate of the hitting time $c_4s^{\theta+d_H}\leq \mathbb{E}^x\tau(x,s)\leq c_5s^{\theta+d_H}$, where $\tau(x,s)=\inf\{t\geq 0:X_t\notin B_s(x)\}$. See \cite{B} for a proof. Finally, by Theorem 3.1.1 of Barlow's book \cite{B} or by following \cite{HK}, we can finally find that our diffusion is a fractional diffusion.

\begin{theorem}\label{thm64}
	The Hunt process $X=(\mathbb{P}^x,x\in\G,X_t,t\geq 0)$ associated with the form $(\mcE,\mcF)$ on $L^2(\G,d_H)$ is a fractional diffusion, with $\beta=\theta+d_H$, in the sense of Definition \ref{def61}.
\end{theorem}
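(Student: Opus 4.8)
The plan is to assemble Theorem \ref{thm64} by verifying each of the four conditions in Definition \ref{def61}, drawing on the standard machinery of Hambly and Kumagai \cite{HK} and Barlow's book \cite{B}, which applies once we have a regular Dirichlet form, a volume-doubling Ahlfors-regular measure, and matching resistance estimates. The key inputs are already in hand: Theorem \ref{thm55} gives that $(\mcE,\mcF)$ is a strongly local regular resistance form on $\G$; Lemma \ref{lemma14} gives Ahlfors $d_H$-regularity of $\mu_H$; Lemma \ref{lemma15} gives $d_g\asymp d$; Proposition \ref{prop62} gives the Nash inequality; and Proposition \ref{prop63} gives the two-sided resistance estimate $R(p,q)\asymp d(p,q)^\theta$ together with $R(p,B_s^c(p))\asymp s^\theta$. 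The task is essentially to check that these hypotheses feed correctly into the known heat-kernel machinery and to identify the walk dimension $\beta$.

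First I would establish parts (a) and (b) of Definition \ref{def61}. Symmetry with respect to $\mu_H$ is immediate from the symmetry of the form, and since $(\mcE,\mcF)$ is strongly local and regular, the associated process is a diffusion; conservativeness follows because $\G$ is compact and $1\in\mcF$ with $\mcE(1)=0$. The Feller property follows from the continuity of the resolvent together with the fact that $\mcF\subset C(\G)$. Next I would establish the existence and continuity of the transition density (part (c)): the Nash inequality of Proposition \ref{prop62} yields, by the standard Carlen--Kusuoka--Stroock argument \cite{CKS}, the on-diagonal bound $p(t,x,y)\leq c\,t^{-d_S/2}$ and the H\"older estimate $|p(t,x,y)-p(t,x,y')|\leq c\,t^{-1-d_S/2}R(y,y')$, from which joint continuity for $t>0$ follows; the Chapman--Kolmogorov equations are automatic for the transition density of a symmetric Markov semigroup.

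The heart of the argument is part (d), the two-sided sub-Gaussian estimate. Here I would invoke the mechanism organized in Barlow's book \cite{B} that converts resistance estimates into heat kernel estimates on a fractional metric space. The crucial bridge is the hitting-time estimate: combining the resistance lower bound $R(p,B_s^c(p))\geq c_3 s^\theta$ with the volume estimate $\mu_H(B_s(p))\asymp s^{d_H}$ yields, via the Einstein-type relation $\mathbb{E}^x\tau(x,s)\asymp R(p,B_s^c(p))\,\mu_H(B_s(p))$, the bound $c_4 s^{\theta+d_H}\leq \mathbb{E}^x\tau(x,s)\leq c_5 s^{\theta+d_H}$. This identifies the walk dimension as $\beta=\theta+d_H$ and is consistent with $d_S=2d_H/\beta$ from Proposition \ref{prop62}. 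With this time--space scaling fixed, the two-sided resistance bound and volume doubling place us squarely in the setting where the Hambly--Kumagai method \cite{HK} and Theorem 3.1.1 of \cite{B} apply, yielding the full sub-Gaussian estimate.

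The main obstacle I anticipate is not any single estimate, since the resistance and volume bounds are already proved, but rather verifying that the general theorems of \cite{B,HK}, which were formulated for p.c.f. self-similar sets or abstract fractional metric spaces, apply verbatim to $\G$ despite its infinite cell structure and overlapping geometry. Concretely, the chaining arguments underlying the off-diagonal lower bound require a controlled way of linking points through overlapping chains of cells of comparable diameter; this is exactly where the finite type property and the geodesic-metric comparison of Lemma \ref{lemma15} must be used, to guarantee that a bounded number of comparable cells suffices to connect any two points. I would therefore spend most care confirming that the \emph{fractional metric space} structure of $(\G,d_g,\mu_H)$ established in Section 2, together with the uniform resistance estimates of Proposition \ref{prop63}, meets the precise hypotheses of the cited theorems, so that the sub-Gaussian estimate transfers without modification; the remaining steps are then routine citations.
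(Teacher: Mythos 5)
Your proposal is correct and follows essentially the same route as the paper: the paper likewise feeds the Nash inequality (via \cite{CKS}) into the on-diagonal upper bound and H\"older estimate, combines the resistance estimates of Proposition \ref{prop63} with Ahlfors regularity to get the on-diagonal lower bound and the hitting-time estimate $\mathbb{E}^x\tau(x,s)\asymp s^{\theta+d_H}$, and then concludes by citing Theorem 3.1.1 of \cite{B} or the method of \cite{HK}. Your additional care about the Feller/conservativeness points and about verifying the hypotheses of the general machinery on a non-p.c.f.\ space is sound but does not change the argument, which the paper treats in the same (more condensed) way.
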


\bibliographystyle{amsplain}

\end{document}